\documentclass[10pt,reqno]{amsart}

\usepackage{enumerate}
\usepackage{amsmath, amssymb, amsthm,amsfonts}
\usepackage{mathrsfs}
\usepackage{esint}
\usepackage{xcolor}
\usepackage{mathtools}
\usepackage{hyperref}
\usepackage{bm}
\usepackage{accents}
\usepackage[foot]{amsaddr}
\usepackage{todonotes}

\makeatletter

\numberwithin{equation}{section}

\newtheorem{thm}{Theorem}[section]
\newtheorem{theorem}[thm]{Theorem}
\newtheorem{lemma}[thm]{Lemma}
\newtheorem{corollary}[thm]{Corollary}

\newtheorem{proposition}[thm]{Proposition}
\theoremstyle{definition}

\newtheorem{example}[thm]{Example}
\newtheorem{assumption}[thm]{Assumption}
\theoremstyle{remark}

\newtheorem{remark}[thm]{\bf{Remark}}

\newcommand\aint{-\hspace{-0.38cm}\int}

\newcommand\bA{\mathbb{A}}
\newcommand\bB{\mathbb{B}}

\newcommand\bE{\mathbb{E}}

\newcommand\bH{\mathbb{H}}

\newcommand\bM{\mathbb{M}}
\newcommand\bN{\mathbb{N}}

\newcommand\bR{\mathbb{R}}

\newcommand\bZ{\mathbb{Z}}

\newcommand\cA{\mathcal{A}}

\newcommand\cF{\mathcal{F}}

\newcommand\cL{\mathcal{L}}

\newcommand\cS{\mathcal{S}}

\begin{document}

\title[]{A frequency localized maximum principle and its applications to time fractional equations with rough coefficients}

\author[J.-H. Choi]{Jae-Hwan Choi}
\address{Department of Mathematics, The Catholic University of Korea, 43 Jibong-ro, Bucheon-si, Gyeonggi-do, 14662, Republic of Korea}
\email{mathjhchoi@catholic.ac.kr}
\thanks{J.-H. Choi was supported by the National Research Foundation of Korea (NRF) grant funded by the Korea government (MSIT) (No. RS-2026-25477288).}

\author[J. Ryu]{Junhee Ryu}
\address{School of Mathematics, Korea Institute for Advanced Study, 85 Hoegi-ro, Dongdaemun-gu, Seoul, 02455, Republic of Korea}
\email{junhryu@kias.re.kr}
\thanks{J. Ryu was supported by a KIAS Individual Grant (MG101501) at Korea Institute for Advanced Study and by the National Research Foundation of Korea (NRF) grant funded by the Korea government (MSIT) (No. RS-2026-25477288).}

\subjclass[2020]{35K15, 35B50, 35B65, 26A16, 35R11}
\keywords{parabolic equations with variable coefficients, mixed-norm Sobolev-Zygmund estimates, partial Schauder theory, Caputo fractional derivative}

\begin{abstract}
We establish a frequency localized maximum principle for parabolic second-order partial differential equations with the Caputo time fractional derivative  $\partial_t^\alpha$ of order $\alpha\in(0,1)$. 
The leading coefficients are uniformly elliptic, measurable in time, and independent of space. 
We construct suitable equivalent norms on frequency localized subspaces of $L_\infty(\mathbb{R}^d)$, for which a frequency localized maximum principle holds.
The corresponding result may fail in the usual $L_\infty$ norm even for the classical heat equation.
As an application, we prove existence, uniqueness, and regularity estimates in Sobolev–Zygmund spaces with Muckenhoupt weights in time for equations with spatially variable coefficients and zero initial data.
The coefficients are strongly measurable in time with values in a H\"older–Zygmund space of positive order and uniformly bounded in this space. 
\end{abstract}

\maketitle

\section{Introduction}

The classical maximum principle plays an important role in the regularity theory of partial differential equations.
A maximum principle for Littlewood-Paley components, known as the frequency localized maximum principle, has also been applied to a variety of regularity problems.
For example, the following frequency localized maximum principle is known for sufficiently regular solutions of the fractional heat equation $\partial_t u+(-\Delta)^{\beta/2}u=f$ with $0<\beta<2$:
\begin{equation} \label{eq10100149}
\frac{d}{dt}\|\Delta_k u\|_{L_\infty(\bR^d)}+c_\beta 2^{\beta k}\|\Delta_k u\|_{L_\infty(\bR^d)} \leq\|\Delta_k f\|_{L_\infty(\bR^d)}, \quad k\geq1,
\end{equation}
almost everywhere in time, where $c_\beta>0$ is independent of $k$; see \cite{Li13add,WZ11add}.
See Section \ref{sec_main_results} for the precise definition of the Littlewood-Paley projection $\Delta_k$.

We briefly review some applications of frequency localized maximum principles, including well-posedness results for nonlinear equations, Bernstein inequalities, and regularity estimates for equations involving nonlocal operators.
Wang and Zhang~\cite{WZ11add} used a frequency localized maximum principle to prove well-posedness results for the two-dimensional quasi-geostrophic equation in critical Besov spaces.
Li~\cite{Li13add} established frequency localized Bernstein inequalities for $1<q<\infty$.
In addition, the exponential contractivity of frequency localized semigroups was established, including the case $q=\infty$ for fractional Laplacians of order $0<\beta<2$.
For more general nonlocal operators, Song and Xie~\cite{SX23} used frequency localized maximum principles to obtain H\"older estimates and well-posedness results for stochastic differential equations with singular coefficients.
Cheng, Hao, and Zhang~\cite{CHZ25} used the same method to prove Schauder estimates and averaging principles for stochastic differential equations driven by stable processes.

However, the frequency localized maximum principle does not necessarily hold for local spatial operators.
Even for the classical heat equation $\partial_t u=\Delta u+f$, the analogue of \eqref{eq10100149} with $\beta=2$ may fail.
For simplicity, we consider a one-dimensional counterexample. Fix $\omega = 5/4$ and choose $u_0(x) = a\cos(\omega x) - b\cos(3\omega x)$ so that
\begin{equation*}
\Delta_1 u_0(x) = \frac{9\cos(\omega x)-\cos(3\omega x)}{8}.
\end{equation*}
For the heat solution $u(t)=e^{t\Delta}u_0$,
\begin{equation*}
\partial_t(\Delta_1 u)(0,0) = (\Delta_1 u_0)''(0) = \frac{-9\omega^2 + (3\omega)^2}{8} = 0,
\end{equation*}
which yields $(\Delta_1u)(t,0)=1+O(t^2)$ as $t\downarrow0$. However, \eqref{eq10100149} with $\beta=2$ and $f=0$ would imply
\begin{equation*}
\|\Delta_1u(t)\|_{L_\infty(\bR)} \leq e^{-4c_2t},
\end{equation*}
which is impossible for sufficiently small $t>0$.
Li~\cite{Li13add} pointed out that the semigroup estimate with multiplicative constant one fails in the standard $L_\infty$ norm, although estimates with a larger multiplicative constant remain valid.
Such estimates do not imply the differential inequality above; see also \cite{BCD11}.

Our goal in this paper is to establish a frequency localized maximum principle for local second-order operators by a different approach.
To overcome the difficulty described above, we introduce suitable equivalent norms on frequency localized subspaces of $L_\infty(\bR^d)$.
More specifically, we construct norms $\|\cdot\|_k$, $k\geq1$, such that
\begin{equation*}
\|u\|_{L_\infty(\bR^d)} \leq \|u\|_k \leq N\|u\|_{L_\infty(\bR^d)}
\end{equation*}
whenever the Fourier transform of $u$ is supported in $\{\xi\in\bR^d:2^{k-1}\leq|\xi|\leq2^{k+1}\}$, where $N$ is independent of $k$.
In this framework, for sufficiently regular solutions of $\partial_t u=\Delta u+f$,
\begin{equation*}
\frac{d}{dt}\|\Delta_k u\|_k + \kappa 2^{2k}\|\Delta_k u\|_k \leq \|\Delta_k f\|_k, \quad k\geq1,
\end{equation*}
where $\kappa>0$.

This approach is particularly useful for time fractional equations with coefficients measurable in time, where standard semigroup methods are not readily available.
Specifically, we establish a frequency localized maximum principle for the time fractional equation
\begin{equation*}
\partial_t^\alpha u=\sum_{i,j=1}^d a_{ij}(t)D_{ij}u+f,
\qquad u(0,\cdot)=0.
\end{equation*}
Here, $\partial_t^\alpha$ is the Caputo derivative of order $\alpha\in(0,1)$, and $A(t)=(a_{ij}(t))$ is a measurable symmetric matrix satisfying $\nu I\leq A(t)\leq KI$.
Our main result states that, for a sufficiently regular solution $u$,
\begin{equation} \label{eq_intro_frequency}
\partial_t^\alpha\|\Delta_k u\|_k + \kappa 2^{2k} \|\Delta_k u\|_k \leq\|\Delta_k f\|_k, \quad k\geq1,
\end{equation}
almost everywhere in time.
Here, the constant $\kappa>0$ is independent of $k$.
See Theorem~\ref{thm_frequency} for the precise statement.

It is worth noting that the above result holds for coefficients merely measurable in time and independent of $x$.
The estimate \eqref{eq_intro_frequency} does not hold in general when the coefficients are allowed to depend also on $x$.
This failure occurs even for smooth uniformly elliptic coefficients; see Example~\ref{ex_variable_coefficients}.

As an application, we establish  partial Schauder estimates and solvability for the time fractional equation
\begin{align} \label{eq_intro}
        \partial_t^\alpha u(t,x) &= \sum_{i,j=1}^{d} a_{ij}(t,x) D_{ij} u(t,x) + \sum_{i=1}^{d} b_i(t,x) D_{i} u(t,x) \nonumber
        \\
        &\quad+ c(t,x) u(t,x) + f(t,x)
\end{align}
on $(0,T)\times\bR^d$.
The leading coefficients $a_{ij}$ are symmetric and uniformly elliptic.
For $\gamma>0$, the coefficients are assumed to belong to $L_\infty((0,T); \Lambda^\gamma(\bR^d))$, where $\Lambda^\gamma(\bR^d) = B^\gamma_{\infty,\infty}(\bR^d)$ is the H\"older-Zygmund space. 
We emphasize that the coefficients are merely strongly measurable in time.
Under Assumption~\ref{asscoeff}, for every $f\in L_p((0,T),\omega dt; \Lambda^\gamma(\bR^d))$, there is a unique solution satisfying
\begin{align} 
&\|\partial_t^\alpha u\|_{L_p((0,T),\omega dt; \Lambda^\gamma(\bR^d))} + \|u\|_{L_p((0,T),\omega dt; \Lambda^{\gamma+2}(\bR^d))}\notag\\
&\leq N\|f\|_{L_p((0,T),\omega dt; \Lambda^\gamma(\bR^d))}.\label{eq_intro_estimate}
\end{align}
Here, the estimate holds for $p\in(1,\infty)$ with $\omega\in A_p(\bR)$, and for $p=\infty$ with $\omega\equiv1$.
Since temporal regularity is considered in an $L_p$ setting, the fractional derivative is specified through the definition of the solution space $\mathbb{H}_{p,\omega,0}^{\alpha,\gamma+2}(S,T)$ in Section~\ref{sec_main_results}. 
The full solvability statement is given in Theorem~\ref{thm_main}.

The literature on parabolic equations with the ordinary time derivative, corresponding formally to $\alpha=1$, is extensive.
Since our main concern is time fractional equations, we focus on partial Schauder estimates related to \eqref{eq_intro_estimate}.
Such estimates have been established in \cite{C24,CR26,kim2018lp,krylov2002calderon,krylov2003parabolic,stinga2021holder} using kernel representations and estimates for operators with coefficients independent of the spatial variable. 
See also the references therein.

There is also an extensive literature on regularity and solvability for time fractional equations. 
One approach to studying time fractional equations is based on fundamental solutions.
Eidelman and Kochubei~\cite{EK04} used a parametrix construction for operators with spatially variable coefficients.
Kim, Kim, and Lim~\cite{KKL17} obtained an $L_q(L_p)$ theory by heat-kernel estimates and perturbation arguments under the assumption that the leading coefficients are piecewise continuous in time.
Chen, Kim, Kumagai, and Wang~\cite{CKKW20} gave representations for inhomogeneous equations associated with a fixed semigroup generator.
Kim, Park, and Ryu~\cite{KPR21} used kernel estimates to establish a mixed-norm Sobolev regularity theory for space-time nonlocal operators.
To the best of our knowledge, it is not known whether \eqref{eq_intro_estimate} can be obtained through the analysis of fundamental solutions when the coefficients are merely measurable in time.
Even for $a_{ij}=a_{ij}(t)$, the spatial Fourier transform gives a fractional differential equation with a time-dependent coefficient, to which the usual exponential formula for the heat equation does not apply.
The difficulty of obtaining a representation formula with coefficients measurable in time was also discussed in \cite{DK19}.

Several results have also been established without relying on fundamental solutions. Luchko~\cite{Luchko09} proved a maximum principle and obtained uniqueness and continuous dependence for classical solutions.
Zacher~\cite{Zacher09} established weak solvability for abstract integro-differential equations, with applications to divergence-form equations with bounded measurable coefficients.
For nondivergence-form equations, Dong and Kim~\cite{DK19} proved $L_p$ estimates with coefficients measurable in time and having small mean oscillations in space.
Weighted mixed-norm estimates were subsequently obtained in \cite{DK21, DK23}.
These results concern Sobolev spaces with finite spatial integrability exponents.
For the ordinary time derivative,  the present authors~\cite{CR26} proved solvability in weighted Sobolev-Zygmund spaces, including nonzero initial data.

However, to the best of our knowledge, partial Schauder estimates have not yet been established for time fractional equations. 
When the coefficients are independent of both time and space, such estimates can be obtained through the analysis of fundamental solutions. 
For coefficients depending on time or space, however, this motivates the development of methods that do not rely on fundamental solutions.
Nevertheless, even for the classical heat equation corresponding to $\alpha=1$, a kernel-free approach in Sobolev-Zygmund spaces does not appear to have been developed. 
To this end, we proceed as follows. 
When $a_{ij}=a_{ij}(t)$, instead of relying on fundamental solutions, we apply the comparison principle for time fractional differential inequalities (Corollary~\ref{cor_comparison}) to \eqref{eq_intro_frequency}.
Estimates for the Mittag-Leffler kernel and the weighted Hardy-Littlewood maximal inequality then yield the a priori estimate.
We extend the estimate \eqref{eq_intro_estimate} to equations with spatially dependent coefficients by applying localization arguments in both space and time.
For existence, we first construct a solution for the Laplacian using the known fractional heat kernel and then apply the method of continuity.

The paper is organized as follows.
Section~\ref{sec_main_results} introduces the function spaces and states the main results.
Section~\ref{sec_max} constructs the equivalent norms and proves the frequency localized maximum principle.
Section~\ref{sec_apriori} establishes the a priori estimate, first for coefficients independent of space and then for general coefficients.
Section~\ref{sec_solvability} proves existence and uniqueness.

We conclude this section by collecting the notation and conventions used throughout the paper.
Let $\bN=\{1,2,\ldots\}$ and $\bN_0=\bN\cup\{0\}$.
For a multi-index $\beta\in\bN_0^d$, write $|\beta|=\beta_1+\cdots+\beta_d$ and $D^\beta=D_1^{\beta_1}\cdots D_d^{\beta_d}$, where $D_i=\partial/\partial x_i$; $D^2u=(D_{ij}u)_{i,j=1}^d$ denotes the Hessian.
The spaces $\cS(\bR^d)$ and $\cS'(\bR^d)$ are the Schwartz space and its space of tempered distributions, respectively, and $C_c^\infty$ denotes the space of smooth compactly supported functions.
Our Fourier transform convention is
\begin{equation*}
\cF[h](\xi)=\int_{\bR^d}e^{-ix\cdot\xi}h(x)dx,
\qquad
\cF^{-1}[g](x)=(2\pi)^{-d}\int_{\bR^d}e^{ix\cdot\xi}g(\xi)d\xi.
\end{equation*}
We denote convolution by $\ast$, the indicator of $E$ by $1_E$, and the integral average by
\begin{equation*}
    \aint_E h(x)dx=|E|^{-1}\int_E h(x)dx.
\end{equation*}
For a Banach space $X$, $C([0,T];X)$ and $C^1([0,T];X)$ have their usual meanings, and $L_p$ spaces of $X$-valued functions are Bochner spaces.
Unspecified spatial norms are taken over $\bR^d$.
The symbol $\star$ indicates the variable on which a time operator acts.
Constants denoted by $N$ may change from line to line, with dependence indicated when needed.
We write $a\lesssim b$ for $a\leq Nb$ and $a\approx b$ for two-sided bounds.

\section{Main results}
\label{sec_main_results}

\subsection{Preliminaries}

We first introduce function spaces which will be used in this paper.
Let $\psi\in\cS(\bR^d)$ be such that
\begin{equation*}
    \cF[\psi]\geq0, \qquad \sum_{k\in\bZ}\cF[\psi](2^{-k}\xi)=1 \quad \text{ for }\xi\neq0,
\end{equation*}
and
\begin{equation*}
    \text{supp}(\cF[\psi]) \subset \left\{\xi\in\bR^d: \frac{1}{2}\leq|\xi|\leq2\right\}.
\end{equation*}
We define the standard Littlewood-Paley projections $\{\Delta_k\}_{k\geq0}$ by
\begin{equation*}
    \Delta_k h:=\psi_k*h,
\end{equation*}
where
\begin{equation*}
    \cF[\psi_k](\xi) := \cF[\psi](2^{-k}\xi), \quad k\geq1,
\end{equation*}
and
\begin{equation*}
    \cF[\psi_0](\xi) := 1-\sum_{k\geq1} \cF[\psi](2^{-k}\xi).
\end{equation*}

For $\gamma\in\bR$, we denote by $B^\gamma_{\infty,\infty}(\mathbb{R}^d)$ the Besov space equipped with the norm
\begin{equation} \label{def_hol}
    \|u\|_{B^\gamma_{\infty,\infty}(\mathbb{R}^d)} := \sup_{j\geq0}2^{j\gamma}\|\Delta_j u\|_{L_{\infty}(\mathbb{R}^d)}.
\end{equation}
Equivalently, $B^\gamma_{\infty,\infty}(\mathbb{R}^d)$ is the H\"older-Zygmund space $\Lambda^\gamma(\mathbb{R}^d)$ if $\gamma\in(0,\infty)$.

For $p \in (1,\infty)$, let $A_p(\bR)$ be the class of all 
nonnegative and locally integrable functions $\omega: \bR \to [0,\infty)$ satisfying
\begin{equation*}
    [\omega]_{A_p(\bR)} := \sup_{r>0, t\in\mathbb{R}} \left(\aint_{t-r}^{t+r} \omega(s) ds \right)\times\left(\aint_{t-r}^{t+r} \omega(s)^{-\frac{1}{p-1}} ds \right)^{p-1} < \infty.
\end{equation*}
Let $0\leq S<T<\infty$, $\gamma\in(0,\infty)$, and $\omega\in A_p(\bR)$ when $p\in(1,\infty)$, and set $\omega\equiv1$ when $p=\infty$. For $p\in(1,\infty)$, we define $\mathbf{\Lambda}_{p,\omega}^{\gamma}(S,T) :=L_p((S,T), \omega dt; \Lambda^{\gamma}(\bR^d))$ to be the set of all strongly measurable functions $u:(S,T)\to\Lambda^{\gamma}(\bR^d)$ satisfying
\begin{equation*}
    \|u\|_{\mathbf{\Lambda}_{p,\omega}^\gamma(S,T)} := \left(\int_S^T \|u(t,\cdot)\|_{\Lambda^{\gamma}(\bR^d)}^p \omega(t) dt \right)^{1/p} <\infty.
\end{equation*}
In the case when $p=\infty$, we define $\mathbf{\Lambda}_{\infty}^{\gamma}(S,T)$ to be the set of all strongly measurable functions $u:(S,T)\to\Lambda^{\gamma}(\bR^d)$ such that
\begin{equation*}
    \|u\|_{\mathbf{\Lambda}_{\infty}^\gamma(S,T)} := \text{ess sup}_{t\in (S,T)} \|u(t,\cdot)\|_{\Lambda^{\gamma}(\bR^d)} <\infty.
\end{equation*}
When $S=0$, we write $\mathbf{\Lambda}_{p,\omega}^{\gamma}(T):=\mathbf{\Lambda}_{p,\omega}^{\gamma}(0,T)$ and $\mathbf{\Lambda}_{\infty}^{\gamma}(T):=\mathbf{\Lambda}_{\infty}^{\gamma}(0,T)$.

\begin{remark}
Strong measurability is used for various techniques in this paper, such as approximation arguments. For instance, let $d=1$, $\gamma\in(0,1)$, and
\begin{equation*}
    f(t,x):=\chi(x-t)|x-t|^\gamma,\qquad (t,x)\in(0,1)\times\bR,
\end{equation*}
where $\chi\in C_c^\infty(\mathbb R)$ equals $1$ on $[-2,2]$. Then $f$ is not strongly measurable as a $\Lambda^\gamma(\bR)$-valued function. Indeed, one can verify that
\begin{equation*}
    [f(t,\cdot)-f(s,\cdot)]_{\Lambda^\gamma(\bR)} \geq 2
\end{equation*}
for $t\neq s$, so $f$ is not essentially separably valued. Here, $[\cdot]_{\Lambda^\gamma(\bR)}$ denotes the seminorm in the H\"older-Zygmund space $\Lambda^\gamma(\bR)$.

We next show that, even after mollifying $f$ in time, $f^{(\varepsilon)}$ does not converge to $f$ in the spatial H\"older norm measured in $L_p$ with respect to time.
More precisely, we extend $f$ by zero for $t\leq0$ and define its time mollification $f^{(\varepsilon)}$ by
\begin{equation*}
f^{(\varepsilon)}(t,x) := \int_0^\varepsilon \varphi_\varepsilon(s) f(t-s,x) ds, \qquad t\in(0,1), \, x\in\bR.
\end{equation*}
Here, $\varphi_\varepsilon(s) := \varepsilon^{-1}\varphi(s/\varepsilon)$ for $\varepsilon>0$, where $\varphi\in C_c^\infty((0,1))$ is a nonnegative function such that $\int_{\bR} \varphi(s) ds=1$.
Let $F_\varepsilon:=f-f^{(\varepsilon)}$. For every $t, \varepsilon \in (0,1)$,
\begin{equation*}
    \frac{F_\varepsilon(t,t+h)-F_\varepsilon(t,t)}{h^\gamma} = 1 - \int_0^{\min\{t,\varepsilon\}} \varphi_\varepsilon(r) \frac{(r+h)^\gamma-r^\gamma}{h^\gamma} dr\to1
\end{equation*}
as $h\downarrow0$. Thus, $[F_\varepsilon(t,\cdot)]_{\Lambda^\gamma(\bR)}\geq1$, and consequently,
\begin{equation*}
    \int_0^1\|f^{(\varepsilon)}(t,\cdot)-f(t,\cdot)\|_{\Lambda^\gamma(\bR)}^p dt \geq N>0,\qquad 1<p<\infty,
\end{equation*}
where $N>0$ is independent of $\varepsilon$. Thus, the claim is proved.
\end{remark}

Next, we introduce some definitions and prove some basic properties related to fractional calculus. For a strongly measurable Banach-valued function $u$ on $[S,T]$, the Riemann-Liouville fractional integral of order $\alpha\in(0,\infty)$ is defined as
\begin{equation} \label{eq8241410}
    I_S^\alpha u(t) := \frac{1}{\Gamma(\alpha)} \int_S^t (t-s)^{\alpha-1} u(s) ds,
\end{equation}
whenever the Bochner integral is finite. For $\alpha=0$, we use the convention $I_S^0u=u$.
In the following lemma, we show that $I_S^\alpha u$ is well defined.
To this end, for a locally integrable scalar function $F$ on $\bR$, we denote by $\bM_tF$ the one-dimensional centered Hardy-Littlewood maximal function
\begin{equation*}
\bM_t F(t) := \sup_{r>0} \frac{1}{2r}\int_{t-r}^{t+r}|F(s)| ds.
\end{equation*}

\begin{lemma} \label{lem_8241556}
    Let $p\in(1,\infty]$, $0\leq S<T<\infty$, $\gamma>0$, and let $\omega\in A_p(\bR)$ with $[\omega]_{A_p(\bR)}\leq K_0$ (put $\omega\equiv1$ and $K_0=1$ when $p=\infty$). Suppose that $u\in \mathbf{\Lambda}_{p,\omega}^\gamma(S,T)$. Then for any $\alpha\in(0,\infty)$, $I_S^\alpha u$ is well defined for almost every $t\in(S,T)$, and is strongly measurable with values in $\Lambda^\gamma(\bR^d)$. In particular,
    \begin{equation} \label{eq8271405}
        \|I_S^\alpha u(t)\|_{\Lambda^\gamma(\bR^d)} \leq N (t-S)^{\alpha} \bM_t \left(\|u(\star,\cdot)\|_{\Lambda^\gamma(\bR^d)} 1_{(S,T)}(\star) \right)(t), \quad t\in(S,T) \text{ (a.e.)},
    \end{equation}
    where $N=N(\alpha)$.
    Consequently, we have
    \begin{equation} \label{eq8241430}
        \|I_S^\alpha u\|_{\mathbf{\Lambda}_{p,\omega}^{\gamma}(S,T)} \leq N (T-S)^{\alpha} \|u\|_{\mathbf{\Lambda}_{p,\omega}^{\gamma}(S,T)},
    \end{equation}
            where $N=N(\alpha,p,K_0)$.
\end{lemma}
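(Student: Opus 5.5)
We first prove the pointwise bound \eqref{eq8271405}, and then deduce well-definedness, strong measurability, and \eqref{eq8241430} from it. Set $F(s):=\|u(s,\cdot)\|_{\Lambda^\gamma(\bR^d)}1_{(S,T)}(s)$. This is a measurable scalar function, because $u$ is strongly measurable. It is locally integrable, since $F\in L_p(\omega\,dt)$ and, for $p<\infty$, $\omega^{-1/(p-1)}$ is locally integrable by the $A_p$ condition, so Hölder gives $L_p(\omega)\subset L_{1,loc}$. By Fubini's theorem, the scalar integral $\int_S^t (t-s)^{\alpha-1}F(s)\,ds$ is finite for a.e.\ $t$, because the convolution of the locally integrable kernel $s^{\alpha-1}1_{(0,T-S)}(s)$ with $F\in L_1(S,T)$ lies in $L_1$. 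At every such $t$ the Bochner integral defining $I_S^\alpha u(t)$ exists, and its norm is bounded by the scalar integral.

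\textbf{Main step: the maximal function bound.} Fix $t\in(S,T)$ and split $(S,t)$ dyadically into $E_j:=\{s: 2^{-j-1}(t-S)< t-s\le 2^{-j}(t-S)\}$ for $j\ge0$. On $E_j$ we have $(t-s)^{\alpha-1}\le N(\alpha) \bigl(2^{-j}(t-S)\bigr)^{\alpha-1}$; this holds in both cases $\alpha<1$ and $\alpha\ge1$, since $t-s$ is comparable to $2^{-j}(t-S)$ on $E_j$. Writing $r_j:=2^{-j}(t-S)$, this gives
\begin{equation*}
\int_{E_j}(t-s)^{\alpha-1}F(s)\,ds\le N r_j^{\alpha-1}\int_{t-r_j}^{t+r_j}F(s)\,ds\le N r_j^{\alpha}\,\bM_tF(t).
\end{equation*}
Summing the geometric series $\sum_j 2^{-j\alpha}$ (which converges because $\alpha>0$) yields
\begin{equation*}
\|I_S^\alpha u(t)\|_{\Lambda^\gamma}\le \frac{1}{\Gamma(\alpha)}\int_S^t(t-s)^{\alpha-1}F(s)\,ds\le N(\alpha)(t-S)^\alpha\bM_tF(t),
\end{equation*}
which is \eqref{eq8271405}. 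The bound is actually valid for every $t$, even though the right-hand side may be infinite on a null set.

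\textbf{Strong measurability.} This is the step I expect to need the most care. I would approximate $u$ in $L_1((S,T);\Lambda^\gamma)$ by simple functions $u_n$ with $u_n\to u$ and $\|u_n\|\le\|u\|$ pointwise a.e. Each $I_S^\alpha u_n$ is a finite sum of terms $c(t)x_i$, where $c(t)$ is a scalar measurable function and $x_i\in\Lambda^\gamma$, so it is strongly measurable. Dominated convergence, applied at every $t$ where $\int_S^t (t-s)^{\alpha-1}F(s)\,ds<\infty$, gives $I_S^\alpha u_n(t)\to I_S^\alpha u(t)$ in $\Lambda^\gamma$. Hence $I_S^\alpha u$ is an a.e.\ limit of strongly measurable functions and is therefore strongly measurable. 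Alternatively, one can note directly that $I_S^\alpha u$ takes values in the closed separable subspace spanned by the essential range of $u$, and invoke Pettis's theorem together with the scalar measurability of $\langle \ell, I_S^\alpha u(t)\rangle$ obtained from Fubini.

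\textbf{The $L_p$ estimate.} Finally, since $(t-S)^\alpha\le (T-S)^\alpha$, \eqref{eq8271405} and the weighted Hardy–Littlewood maximal inequality on $L_p(\bR,\omega\,dt)$ for $\omega\in A_p(\bR)$ give \eqref{eq8241430}. The constant depends only on $p$ and $K_0$ through Muckenhoupt's theorem. For $p=\infty$ the bound $\|\bM_tF\|_{L_\infty}\le\|F\|_{L_\infty}$ suffices.
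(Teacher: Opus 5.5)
Your proof is correct and follows essentially the same route as the paper: the same dyadic decomposition of $(S,t)$ gives the pointwise maximal bound, strong measurability comes from approximating $u$ by simple functions and passing to the limit in $I_S^\alpha$ pointwise by dominated convergence, and the weighted Hardy--Littlewood theorem gives the $L_p$ estimate. Your measurability step is slightly leaner than the paper's. You dominate by $2F$ using simple functions with $\|u_n\|\le\|u\|$ and use only $F\in L_1(S,T)$, so the paper's Cauchy argument in $\mathbf{\Lambda}^\gamma_{p,\omega}(S,T)$ and its separate reduction of $p=\infty$ to $p<\infty$ are not needed.
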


\begin{proof}
    Let $v(t):= \|u(t,\cdot)\|_{\Lambda^\gamma(\bR^d)}1_{(S,T)}(t)$. Then due to $u\in \mathbf{\Lambda}_{p,\omega}^\gamma(S,T)$, we see that $v\in L_p(\bR,\omega dt)$. Hence $\bM_t v$ is well defined on $\bR$ and finite for almost every $t\in\bR$.

    We first establish the well-definedness of $I_S^\alpha u$ and \eqref{eq8271405}.
    By \eqref{eq8241410} and
    \begin{equation*}
        \|I_S^\alpha u(t)\|_{\Lambda^\gamma(\bR^d)} \leq \frac{1}{\Gamma(\alpha)} \int_S^t (t-s)^{\alpha-1} v(s) ds = I_S^\alpha v(t),
    \end{equation*}
    it suffices to show that for $t\in(S,T)$,
    \begin{align} \label{eq8241429}
        I_S^\alpha v(t) \leq N(t-S)^{\alpha}\bM_t v(t).
    \end{align}
    Indeed,
    \begin{align*}
        I_S^\alpha v(t) &= \frac{1}{\Gamma(\alpha)} \sum_{n=0}^{\infty} \int_{t-(t-S)2^{-n}}^{t-(t-S)2^{-n-1}} (t-s)^{\alpha-1} v(s) ds
        \\
        &\leq N(\alpha) (t-S)^{\alpha-1} \sum_{n=0}^{\infty} 2^{-n(\alpha-1)} \int_{t-(t-S)2^{-n}}^{t-(t-S)2^{-n-1}} v(s) ds
        \\
        &\leq N (t-S)^{\alpha} \sum_{n=0}^{\infty} 2^{-n\alpha} \aint_{t-(t-S)2^{-n}}^{t+(t-S)2^{-n}} v(s) ds
        \\
        &\leq N(t-S)^{\alpha} \bM_tv(t) \sum_{n=0}^{\infty} 2^{-n\alpha}.
    \end{align*}
    Since $\sum_{n=0}^{\infty} 2^{-n\alpha}<\infty$, we obtain \eqref{eq8241429}.

Once the strong measurability of $I_S^\alpha u$ is proved, when $p<\infty$, \eqref{eq8241430} follows from \eqref{eq8271405} and the weighted maximal function theorem (see \cite[Theorem 2.2]{DK18}). If $p=\infty$, then we use
    \begin{align*}
        \|I_S^\alpha u(t)\|_{\Lambda^\gamma(\bR^d)} &\leq N \|u\|_{\mathbf{\Lambda}_\infty^\gamma(S,T)} \int_S^t (t-s)^{\alpha-1} ds 
        \\
        &= N (t-S)^{\alpha} \|u\|_{\mathbf{\Lambda}_\infty^\gamma(S,T)} \leq N (T-S)^{\alpha} \|u\|_{\mathbf{\Lambda}_\infty^\gamma(S,T)}.
    \end{align*}
Thus, it remains to show that $I_S^\alpha u$ is strongly measurable.
We first consider $p\in(1,\infty)$. 
    Since $u$ is strongly measurable, there is a sequence of simple functions of the form
    \begin{equation*}
        u_n(t) = \sum_{m=1}^{N_n} u_{nm}1_{A_{nm}}(t),
    \end{equation*}
    where $u_{nm}\in \Lambda^\gamma(\bR^d)$ and $A_{nm}\subset(S,T)$ are measurable, such that 
\begin{equation} \label{eq9131548}
\|u_n(t)-u(t)\|_{\Lambda^\gamma(\bR^d)} \to 0
\end{equation}
as $n\to\infty$ for almost every $t\in(S,T)$.
Let $A_n := \{t\in(S,T): \|u_n(t)-u(t)\|_{\Lambda^\gamma(\bR^d)} \leq 1\}$ and define $\tilde{u}_n(t):=u_n(t)1_{A_n}(t)$. Then $\tilde{u}_n$ is still strongly measurable, and for almost every $t\in(S,T)$,
$\tilde{u}_n(t) \to u(t)$ in $\Lambda^\gamma(\bR^d)$ as $n\to\infty$. Moreover,
\begin{equation} \label{eq9132307}
    \|\tilde{u}_n(t)\|_{\Lambda^\gamma(\bR^d)} \leq 1+\|u(t)\|_{\Lambda^\gamma(\bR^d)}
\end{equation}
and hence $\tilde{u}_n \in \mathbf{\Lambda}_{p,\omega}^{\gamma}(S,T)$.
Moreover, by the dominated convergence theorem, one can find that $\tilde{u}_n\to u$ in $\mathbf{\Lambda}_{p,\omega}^{\gamma}(S,T)$.
Since $\tilde{u}_n$ is a simple function, $I_S^\alpha\tilde{u}_n$ is also strongly measurable.
By \eqref{eq8271405} with $\tilde{u}_n-\tilde{u}_m$ instead of $u$, and the weighted maximal function theorem (see \cite[Theorem 2.2]{DK18}),
\begin{align*}
    \|I_S^\alpha\tilde{u}_n-I_S^\alpha\tilde{u}_m\|_{\mathbf{\Lambda}_{p,\omega}^{\gamma}(S,T)} \leq N(T-S)^{\alpha} \|\tilde{u}_n-\tilde{u}_m\|_{\mathbf{\Lambda}_{p,\omega}^{\gamma}(S,T)},
\end{align*}
which implies that $I_S^\alpha\tilde{u}_n$ is Cauchy in $\mathbf{\Lambda}_{p,\omega}^{\gamma}(S,T)$. 
Due to \eqref{eq9131548} and \eqref{eq9132307}, we have
\begin{equation*}
    \|I_S^\alpha \tilde{u}_n(t,\cdot) -I_S^\alpha u(t,\cdot)\|_{\Lambda^{\gamma}(\bR^d)} \leq \frac{1}{\Gamma(\alpha)} \int_S^t (t-s)^{\alpha-1} \left(1 + 2\|u(s,\cdot)\|_{\Lambda^{\gamma}(\bR^d)}\right) ds.
\end{equation*}
Thus, by the dominated convergence theorem for the Bochner integral, we have $I_S^\alpha \tilde{u}_n(t) \to I_S^\alpha u(t)$ in $\Lambda^\gamma(\bR^d)$. This yields that $I_S^\alpha u$ is strongly measurable and in $\mathbf{\Lambda}_{p,\omega}^{\gamma}(S,T)$.

Lastly, we deal with $p=\infty$. Since $T-S<\infty$, $u\in \mathbf{\Lambda}_{q,\omega_q}^\gamma(S,T)$ for any $q\in(1,\infty)$ and any $\omega_q \in A_q(\bR)$. Therefore, the strong measurability follows immediately from the result for $p<\infty$.
The lemma is proved.
\end{proof}

Below we collect some basic properties of the operator $I_S^\alpha$.

\begin{lemma} \label{lem10122003}
   Let $p\in(1,\infty]$, $0\leq S<T<\infty$, $\gamma>0$, and let $\omega\in A_p(\bR)$ with $[\omega]_{A_p(\bR)}\leq K_0$ (put $\omega\equiv1$ and $K_0=1$ when $p=\infty$). Suppose that $u\in \mathbf{\Lambda}_{p,\omega}^\gamma(S,T)$.
   
   (i) For $\alpha_1,\alpha_2\in(0,\infty)$, the semigroup identity
   \begin{equation} \label{eq8241804}
       I_S^{\alpha_1}I_S^{\alpha_2} u = I_S^{\alpha_1+\alpha_2}u
   \end{equation}
   holds for almost every $t\in (S,T)$.

   (ii) For $\alpha\in[0,\infty)$, $I_S^{1+\alpha}u$ is absolutely continuous. Moreover,
   \begin{equation} \label{eq8241829}
       \frac{d}{dt}I_S^{1+\alpha}u = I_S^\alpha u
   \end{equation}
   holds for almost every $t\in (S,T)$.

    (iii) Let $\alpha\in(0,\infty)$. If $I_S^\alpha u=0$ almost everywhere, then $u=0$ almost everywhere.
\end{lemma}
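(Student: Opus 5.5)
The plan is to reduce all three statements to scalar identities via Fubini's theorem for Bochner integrals. Lemma~\ref{lem_8241556} guarantees that every fractional integral appearing below is well defined almost everywhere. It also guarantees that each one is strongly measurable and belongs to $\mathbf{\Lambda}_{p,\omega}^\gamma(S,T)$, so iterated integrals make sense.

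\emph{(i).} For almost every $t$, I write
\begin{equation*}
I_S^{\alpha_1}I_S^{\alpha_2}u(t)=\frac{1}{\Gamma(\alpha_1)\Gamma(\alpha_2)}\int_S^t\int_S^s(t-s)^{\alpha_1-1}(s-r)^{\alpha_2-1}u(r)\,dr\,ds .
\end{equation*}
The integrand is absolutely integrable in $\Lambda^\gamma(\bR^d)$-norm on the triangle $\{S<r<s<t\}$. Indeed, the norm integral equals $I_S^{\alpha_1+\alpha_2}v(t)$ with $v=\|u(\star,\cdot)\|_{\Lambda^\gamma}1_{(S,T)}$, and this is finite a.e.\ by \eqref{eq8271405} with $\alpha_1+\alpha_2$ in place of $\alpha$. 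Fubini for Bochner integrals then allows me to swap the order of integration. The inner integral is the Beta integral
\begin{equation*}
\int_r^t(t-s)^{\alpha_1-1}(s-r)^{\alpha_2-1}ds=B(\alpha_1,\alpha_2)(t-r)^{\alpha_1+\alpha_2-1},
\end{equation*}
which gives \eqref{eq8241804}. This justification of absolute integrability is the only real point in (i).

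\emph{(ii).} Set $w=I_S^\alpha u$, with $w=u$ when $\alpha=0$. Then $w\in L_1((S,T);\Lambda^\gamma(\bR^d))$: for $p<\infty$ this follows from H\"older's inequality with the $A_p$ weight, since $\omega^{-1/(p-1)}$ is locally integrable, and for $p=\infty$ it is trivial. By (i) when $\alpha>0$, and by definition when $\alpha=0$,
\begin{equation*}
I_S^{1+\alpha}u(t)=I_S^1w(t)=\int_S^tw(s)\,ds
\end{equation*}
for a.e.\ $t$. I then take the right-hand side as the (absolutely continuous) representative. The vector-valued Lebesgue differentiation theorem for Bochner integrable functions gives $\frac{d}{dt}\int_S^tw=w$ a.e., which is \eqref{eq8241829}. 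A minor subtlety is that (i) holds only a.e.; I will note that $I_S^{1+\alpha}u(t)$ is defined for every $t$ by the integrability above, so the identity actually holds everywhere by continuity of both sides.

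\emph{(iii).} Choose $n\in\bN$ with $n>\alpha$. By (i), $I_S^{n}u=I_S^{n-\alpha}I_S^\alpha u=0$ a.e. By (ii), $I_S^n u$ is a continuous representative, so it vanishes identically. Differentiating $n$ times using (ii) gives $I_S^{n-1}u=0,\dots,I_S^1u=0$ a.e., each step also holding everywhere by continuity. Finally, from $\int_S^tu\,ds=0$ for all $t$, the last differentiation yields $u=0$ a.e. The step I expect to need the most care is the bookkeeping of representatives, i.e.\ equality a.e.\ versus everywhere, when iterating (ii). Continuity of the $I^1$-type integrals resolves it.
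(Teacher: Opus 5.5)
Your proposal is correct and follows the paper's proof essentially step by step. Part (i) is Fubini for Bochner integrals together with the Beta integral, after checking finiteness of $\int_S^t (t-r)^{\alpha_1+\alpha_2-1}\|u(r)\|\,dr$ for a.e.\ $t$. Part (ii) reduces via (i) to the case $\alpha=0$ and then applies the vector-valued Lebesgue differentiation theorem, with $L_1$-integrability from H\"older's inequality and the $A_p$ condition. Part (iii) uses $I_S^n u=I_S^{n-\alpha}I_S^\alpha u=0$ followed by $n$ differentiations. Your extra bookkeeping, that $I_S^{1+\alpha}u$ is defined and continuous everywhere so the a.e.\ identities upgrade to everywhere, is a harmless refinement of what the paper leaves implicit.
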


\begin{proof}
$(i)$ By Lemma \ref{lem_8241556}, both $I_S^{\alpha_1}I_S^{\alpha_2} u$ and $I_S^{\alpha_1+\alpha_2}u$ are well defined. In particular, for almost every $t\in(S,T)$,
\begin{equation*}
\int_S^t(t-r)^{\alpha_1+\alpha_2-1}\|u(r,\cdot)\|_{\Lambda^\gamma(\bR^d)} dr < \infty.
\end{equation*}
For such a $t$, by Fubini's theorem and the change of variables $s=r+(t-r)\theta$,
\begin{align*}
&\int_S^t \int_S^s (t-s)^{\alpha_1-1}(s-r)^{\alpha_2-1} \|u(r,\cdot)\|_{\Lambda^\gamma(\bR^d)} drds
\\
&\quad= \int_S^t\left(\int_r^t(t-s)^{\alpha_1-1}(s-r)^{\alpha_2-1}ds\right) \|u(r,\cdot)\|_{\Lambda^\gamma(\bR^d)} dr
\\
&\quad= B(\alpha_1,\alpha_2)\int_S^t(t-r)^{\alpha_1+\alpha_2-1} \|u(r,\cdot)\|_{\Lambda^\gamma(\bR^d)} dr < \infty,
\end{align*}
where $B(\cdot,\cdot)$ is the Beta function. Using $B(\alpha_1,\alpha_2)=\frac{\Gamma(\alpha_1)\Gamma(\alpha_2)}{\Gamma(\alpha_1+\alpha_2)}$ and Fubini's theorem (see e.g. \cite[Proposition 1.2.7]{HNVW16}),
\begin{align*}
I_S^{\alpha_1}\big( I_S^{\alpha_2}u \big)(t) &= \frac{1}{\Gamma(\alpha_1)\Gamma(\alpha_2)}
\int_S^t\int_S^s(t-s)^{\alpha_1-1}(s-r)^{\alpha_2-1} u(r) drds
\\
&=\frac{1}{\Gamma(\alpha_1)\Gamma(\alpha_2)}
\int_S^t \left(\int_r^t(t-s)^{\alpha_1-1}(s-r)^{\alpha_2-1}ds\right) u(r)dr
\\
&=\frac{1}{\Gamma(\alpha_1+\alpha_2)}
\int_S^t(t-r)^{\alpha_1+\alpha_2-1} u(r)dr
\\
&=I_S^{\alpha_1+\alpha_2} u(t).
\end{align*}

$(ii)$ Due to \eqref{eq8241804},
\begin{equation*}
\frac{d}{dt}I_S^{1+\alpha}u=\frac{d}{dt}I_S^1I_S^\alpha u.
\end{equation*}
Therefore, it suffices to consider the case $\alpha=0$. In this case, the integral is the classical one, and the result follows from \cite[Proposition 2.5.9]{HNVW16}, provided that $u\in L_1((S,T),dt;\Lambda^\gamma(\bR^d))$. This integrability follows from H\"older's inequality:
\begin{equation} \label{eq9140113}
\int_S^T\|u(t,\cdot)\|_{\Lambda^\gamma(\bR^d)} dt \leq \|u\|_{\mathbf{\Lambda}_{p,\omega}^\gamma(S,T)} \left(\int_S^T\omega(t)^{1/(1-p)} dt\right)^{(p-1)/p} < \infty
\end{equation}
if $p<\infty$, and
\begin{equation*}
\int_S^T\|u(t,\cdot)\|_{\Lambda^\gamma(\bR^d)} dt \leq (T-S) \|u\|_{\mathbf{\Lambda}_{\infty}^\gamma(S,T)} < \infty
\end{equation*}
if $p=\infty$.

$(iii)$ Let $n\in\bN$ such that $n>\alpha$. Due to \eqref{eq8241804},
\begin{equation*}
    I_S^nu= I_S^{n-\alpha} I_S^\alpha u = 0.
\end{equation*}
By \eqref{eq8241829},
\begin{equation*}
    u = \left(\frac{d}{dt}\right)^n I_S^nu = 0.
\end{equation*}
The lemma is proved.
\end{proof}

\begin{remark}
    In general, when $\alpha\in(0,1)$, $I_S^\alpha u$ is not necessarily absolutely continuous.
    Let $u(t) = (t-S)^{-\beta} v$, where $\alpha < \beta < 1$ and $v \in \Lambda^\gamma(\bR^d)$ such that $v \neq 0$. 
    For fixed $p\in(1,\infty)$, choose $\delta\in(-1,p-1)$ such that $p\beta - 1 < \delta < p-1$, and let $\omega(t) := |t-S|^\delta$. Then $\omega\in A_p(\bR)$ and $u\in \mathbf{\Lambda}_{p,\omega}^\gamma(S,T)$. Moreover,
    \begin{equation*}
        I_S^\alpha u(t) = Nv \int_S^t (t-s)^{\alpha-1} (s-S)^{-\beta} ds = Nv(t-S)^{\alpha-\beta},
    \end{equation*}
    which is singular at $t=S$.
\end{remark}

We next turn to fractional derivatives. Let $\alpha\in(0,1)$. For a sufficiently regular function $u$, for example $u\in C^1([S,T];X)$ with values in a Banach space $X$, the Riemann–Liouville fractional derivative of order $\alpha$ is defined by
\begin{equation*}
    D_{t,S}^{\alpha}u(t) :=\frac{d}{dt} I_S^{1-\alpha}u(t).
\end{equation*}
We also define the Caputo fractional derivative of order $\alpha$ by
\begin{equation*}
    \partial_{t,S}^{\alpha}u(t) = D_{t,S}^{\alpha}(u(\cdot)-u(S))(t).
\end{equation*}
When $S=0$, we simply denote $\partial_t^\alpha := \partial_{t,0}^\alpha$.

We aim to extend this definition to Banach-valued $L_p$ spaces with zero initial data.
Roughly speaking, we seek a solution $u$ in the space characterized by $u\in\mathbf{\Lambda}_{p,\omega}^{\gamma+2}(S,T)$ and $\partial_{t,S}^\alpha u\in\mathbf{\Lambda}_{p,\omega}^\gamma(S,T)$. 
However, this characterization, in general, does not uniquely determine the extension of $\partial_{t,S}^\alpha$.
Indeed, consider the case when $\alpha p<1$ and $\omega\equiv1$. For a nonzero $\phi\in C_c^\infty(\bR^d)$, let
\begin{equation*}
u_n(t,x)=t^{1/n}\phi(x),\qquad v_n(t,x)=\phi(x).
\end{equation*}
Both sequences converge to $\phi$ in $\mathbf{\Lambda}_{p,1}^{\gamma+2}(T)$, whereas $u_n(0,\cdot)=0$ and $v_n(0,\cdot)=\phi$.
Moreover,
\begin{align*}
\partial_t^\alpha u_n(t,x) &= \frac{\Gamma(1+1/n)}{\Gamma(1+1/n-\alpha)}t^{1/n-\alpha}\phi(x) \to \frac{t^{-\alpha}}{\Gamma(1-\alpha)}\phi(x),
\\
\partial_t^\alpha v_n(t,x) &= 0,
\end{align*}
where the convergence takes place in $\mathbf{\Lambda}_{p,1}^{\gamma}(T)$.

To obtain a well-defined notion of the solution space, we recall that $\partial_{t,S}^{\alpha}u$ is expressed as the time derivative of $I_S^{1-\alpha}(u-u(S))$.
Hence, we need to consider $u\in\mathbf{\Lambda}_{p,\omega}^{\gamma+2}(S,T)$ such that $I_S^{1-\alpha}u$ has a weak time derivative in $\mathbf{\Lambda}_{p,\omega}^{\gamma}(S,T)$. By \eqref{eq8241430} and
\begin{equation*}
    \mathbf{\Lambda}_{p,\omega}^{\gamma}(S,T) \subset L_1((S,T);\Lambda^\gamma(\bR^d)),
\end{equation*}
$I_S^{1-\alpha}u$ and its weak time derivative belong to $L_1((S,T);\Lambda^\gamma(\bR^d))$. 
Consequently, $I_S^{1-\alpha}u$ has an absolutely continuous representative on $[S,T]$, and its initial trace is well defined.
As for the zero initial condition, for a sufficiently regular function $v$ such that $v(S)=0$,
\begin{equation*}
I_S^{1-\alpha}v(t)=\int_S^t\partial_{t,S}^\alpha v(s) ds,
\end{equation*}
which yields $\left(I_S^{1-\alpha}v\right)(S)=0$. 

Motivated by these observations, we introduce the following function space.
We define $\bH_{p,\omega,0}^{\alpha,\gamma+2}(S,T)$ to be the space of all $u \in \mathbf{\Lambda}_{p,\omega}^{\gamma+2}(S,T)$ such that $I_S^{1-\alpha}u$ is a $\Lambda^{\gamma}(\bR^d)$-valued absolutely continuous function, $I_S^{1-\alpha}u(S,\cdot)=0$, and $\partial_{t,S}^\alpha u := \frac{d}{dt}I_S^{1-\alpha}u \in \mathbf{\Lambda}_{p,\omega}^{\gamma}(S,T)$.
Here, the norm in $\bH_{p,\omega,0}^{\alpha,\gamma+2}(S,T)$ is given by
\begin{equation*}
\|u\|_{\bH_{p,\omega,0}^{\alpha,\gamma+2}(S,T)} := \|u\|_{\mathbf{\Lambda}_{p,\omega}^{\gamma+2}(S,T)} + \|\partial_{t,S}^\alpha u\|_{\mathbf{\Lambda}_{p,\omega}^\gamma(S,T)}.
\end{equation*}
As above, we write $\bH_{p,\omega,0}^{\alpha,\gamma+2}(T):=\bH_{p,\omega,0}^{\alpha,\gamma+2}(0,T)$ and $\bH_{\infty,0}^{\alpha,\gamma+2}(S,T):=\bH_{\infty,1,0}^{\alpha,\gamma+2}(S,T)$.
 We seek strong solutions of \eqref{eq_intro} in $\bH_{p,\omega,0}^{\alpha,\gamma+2}(T)$, meaning that \eqref{eq_intro} holds as an identity in $\Lambda^\gamma(\bR^d)$ for almost every $t\in(0,T)$.

 Here, we can write
 \begin{equation*}
I_S^{1-\alpha}u(t)=\int_S^t \partial_{t,S}^{\alpha}u(s) ds \quad \text{ for almost every } t\in(S,T),
\end{equation*}
which together with Lemma \ref{lem10122003} $(iii)$ yields that the above definition is well defined.

\begin{proposition}
    Let $p\in(1,\infty]$, $0\leq S<T<\infty$, $\alpha\in(0,1)$, $\gamma>0$, and let $\omega\in A_p(\bR)$ with $[\omega]_{A_p(\bR)}\leq K_0$ (put $\omega\equiv1$ and $K_0=1$ when $p=\infty$). 
Then the space $\bH_{p,\omega,0}^{\alpha,\gamma+2}(S,T)$ is a Banach space.
\end{proposition}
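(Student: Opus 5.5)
We argue by completeness of the underlying Bochner spaces together with the continuity of $I_S^{1-\alpha}$ from Lemma~\ref{lem_8241556}. The norm is clearly a norm: if $\|u\|_{\bH_{p,\omega,0}^{\alpha,\gamma+2}(S,T)}=0$ then $u=0$ in $\mathbf{\Lambda}_{p,\omega}^{\gamma+2}(S,T)$. Linearity of the space follows from linearity of $I_S^{1-\alpha}$, of the time derivative and of the initial condition. So the only issue is completeness.

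Let $(u_n)$ be Cauchy in $\bH_{p,\omega,0}^{\alpha,\gamma+2}(S,T)$. Since $\mathbf{\Lambda}_{p,\omega}^{\gamma+2}(S,T)$ and $\mathbf{\Lambda}_{p,\omega}^{\gamma}(S,T)$ are Banach spaces (weighted Bochner $L_p$ spaces over Banach spaces), there exist $u\in\mathbf{\Lambda}_{p,\omega}^{\gamma+2}(S,T)$ and $g\in\mathbf{\Lambda}_{p,\omega}^{\gamma}(S,T)$ such that $u_n\to u$ and $\partial_{t,S}^\alpha u_n\to g$ in the respective spaces. We must identify $g=\partial_{t,S}^\alpha u$ and verify the absolute continuity and the zero trace.

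For each $n$, the definition of the space gives the representation
\begin{equation*}
I_S^{1-\alpha}u_n(t)=\int_S^t\partial_{t,S}^\alpha u_n(s)\,ds \quad\text{for every } t\in[S,T],
\end{equation*}
using the absolutely continuous representative and $I_S^{1-\alpha}u_n(S)=0$. Consider $G(t):=\int_S^t g(s)\,ds$. This is well defined and absolutely continuous with values in $\Lambda^\gamma(\bR^d)$, since $g\in L_1((S,T);\Lambda^\gamma(\bR^d))$ by the H\"older estimate \eqref{eq9140113}. Moreover $G(S)=0$, and $G'=g$ a.e.\ by Lemma~\ref{lem10122003}(ii). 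By \eqref{eq9140113} again,
\begin{equation*}
\sup_{t\in[S,T]}\Big\|\int_S^t(\partial_{t,S}^\alpha u_n-g)\,ds\Big\|_{\Lambda^\gamma}\le N\|\partial_{t,S}^\alpha u_n-g\|_{\mathbf{\Lambda}_{p,\omega}^\gamma(S,T)}\to0 ,
\end{equation*}
so $I_S^{1-\alpha}u_n\to G$ uniformly in $\Lambda^\gamma(\bR^d)$.

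On the other hand, $\|\cdot\|_{\Lambda^\gamma}\le N\|\cdot\|_{\Lambda^{\gamma+2}}$, so $u_n\to u$ in $\mathbf{\Lambda}_{p,\omega}^{\gamma}(S,T)$. Then \eqref{eq8241430} with order $1-\alpha$ gives $I_S^{1-\alpha}u_n\to I_S^{1-\alpha}u$ in $\mathbf{\Lambda}_{p,\omega}^{\gamma}(S,T)$. Passing to a subsequence yields convergence in $\Lambda^\gamma(\bR^d)$ for a.e.\ $t$. Combining the two limits, $I_S^{1-\alpha}u=G$ a.e. Thus $I_S^{1-\alpha}u$ has the absolutely continuous representative $G$, with $G(S)=0$ and $\partial_{t,S}^\alpha u=G'=g\in\mathbf{\Lambda}_{p,\omega}^{\gamma}(S,T)$. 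Hence $u\in\bH_{p,\omega,0}^{\alpha,\gamma+2}(S,T)$ and $\|u_n-u\|_{\bH_{p,\omega,0}^{\alpha,\gamma+2}(S,T)}\to0$.

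The only delicate point is the meaning of "absolutely continuous" and the trace when $I_S^{1-\alpha}u$ is defined only a.e. We settle this by working with the explicit primitive $G$ and the a.e.\ identification above. The well-definedness remark preceding the proposition, via Lemma~\ref{lem10122003}(iii), guarantees that this choice of representative is consistent. For $p=\infty$ the same argument applies verbatim with $\omega\equiv1$.
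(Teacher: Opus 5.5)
Your proof is correct and follows essentially the paper's argument: both identify the limit $g$ with $\partial_{t,S}^\alpha u$ by passing to the limit in an integral identity relating $u_n$ and $\partial_{t,S}^\alpha u_n$, using the boundedness \eqref{eq8241430} of fractional integrals. The only difference is minor. You pass to the limit directly in $I_S^{1-\alpha}u_n=I_S^1\partial_{t,S}^\alpha u_n$, whereas the paper first derives $u=I_S^\alpha g$ from $u_n=I_S^\alpha\partial_{t,S}^\alpha u_n$ and then applies $I_S^{1-\alpha}$ and the semigroup identity. Your route avoids Lemma~\ref{lem10122003}(i), while the paper's yields the representation \eqref{eq9290128}, which it reuses in Lemma~\ref{lem8080013}.
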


\begin{proof}
    Let $u_n$ be Cauchy in $\bH_{p,\omega,0}^{\alpha,\gamma+2}(S,T)$. Then $u_n$ and $\partial_{t,S}^{\alpha} u_n$ converge to $u$ and $g$ in the corresponding spaces.
    By \eqref{eq8241430}, one can see that $I_S^{1-\alpha}u_n$ converges to $I_S^{1-\alpha}u$ in $\mathbf{\Lambda}_{p,\omega}^{\gamma}(S,T)$.
    We claim that $\partial_{t,S}^{\alpha}u=g$.
    By \eqref{eq8241804} and \eqref{eq8241829},
\begin{equation} \label{eq9290128}
    I_{S}^{\alpha}\partial_{t,S}^{\alpha} u_n = I_{S}^{\alpha} \frac{d}{dt} I_{S}^{1-\alpha} u_n = \frac{d}{dt}I_{S}^{1+\alpha} \frac{d}{dt}I_{S}^{1-\alpha} u_n = \frac{d}{dt}I_{S}^1 u_n = u_n,
\end{equation}
    which yields $u=I_S^{\alpha}g$.
    Here, for the third equality, we used $I_S^{1-\alpha}u_n(S,\cdot)=0$, which leads to
    \begin{equation*}
        I_S^1 \frac{d}{dt}I_S^{1-\alpha} u_n = I_S^{1-\alpha} u_n.
    \end{equation*}
    By \eqref{eq8241804}, $I_S^{1-\alpha}u=I_S^1g$. Thus, \eqref{eq8241829} leads to $\partial_{t,S}^{\alpha}u=g$.
    It remains to note that $I_S^{1-\alpha}u$ is absolutely continuous, which directly follows from $I_S^{1-\alpha}u=I_S^1g$. Moreover, it is obvious that $I_S^{1-\alpha}u(S,\cdot) = I_S^1g(S,\cdot) =0$.
    The proposition is proved.
\end{proof}

Next, we establish a time-mollification result in $\bH_{p,\omega,0}^{\alpha,\gamma+2}(S,T)$, together with a density result for $p\in(1,\infty)$.

\begin{lemma} \label{lem_dense}
    Let $p\in(1,\infty]$, $0\leq S<T<\infty$, $\alpha\in(0,1)$, $\gamma>0$, and let $\omega\in A_p(\bR)$ with $[\omega]_{A_p(\bR)}\leq K_0$ (put $\omega\equiv1$ and $K_0=1$ when $p=\infty$). Let $\varphi\in C_c^\infty((0,1))$ be a nonnegative function such that
\begin{equation*}
\int_{\bR} \varphi(s) ds=1,
\end{equation*}
and denote $\varphi_\varepsilon(s) := \varepsilon^{-1}\varphi(s/\varepsilon)$ for $\varepsilon>0$.
For $u\in\bH_{p,\omega,0}^{\alpha,\gamma+2}(S,T)$, extend $u$ by zero for $t\leq S$, and define its time mollification by
\begin{equation} \label{eq8252326}
u^{(\varepsilon)}(t) := \int_0^\varepsilon\varphi_\varepsilon(s)u(t-s) ds, \qquad t\in[S,T].
\end{equation}
Then $u^{(\varepsilon)}(S,\cdot)=0$, and $u^{(\varepsilon)}$ is infinitely differentiable in $t$ on $(S,T)$ with values in $\Lambda^{\gamma+2}(\bR^d)$. Moreover, we have $u^{(\varepsilon)} \in \bH_{p,\omega,0}^{\alpha,\gamma+2}(S,T)$ and
\begin{equation} \label{eq8251612}
\sup_{\varepsilon>0} \|u^{(\varepsilon)}\|_{\bH_{p,\omega,0}^{\alpha,\gamma+2}(S,T)} \leq N\|u\|_{\bH_{p,\omega,0}^{\alpha,\gamma+2}(S,T)},
\end{equation}
where $N=N(p,K_0,\varphi)$.
In particular, if $p\in(1,\infty)$, then 
\begin{equation} \label{eq8251608}
\|u^{(\varepsilon)}-u\|_{\bH_{p,\omega,0}^{\alpha,\gamma+2}(S,T)} \to 0
\end{equation}
as $\varepsilon \to 0$.
\end{lemma}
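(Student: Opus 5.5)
The plan is to show that mollification in time commutes with the fractional operators, once everything is extended by zero for $t\le S$. Write $\bar u$ for the zero extension of $u$ to $(-\infty,T)$. Since $\varphi$ is supported in $(0,1)$, the mollifier only looks at the past: $u^{(\varepsilon)}(t)$ depends only on $\bar u$ on $(t-\varepsilon,t)$, so $u^{(\varepsilon)}(S)=0$.

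\emph{Smoothness and the norm of $u^{(\varepsilon)}$.} Substitute $r=t-s$ to get $u^{(\varepsilon)}(t)=\int \varphi_\varepsilon(t-r)\bar u(r)\,dr$. Because $\bar u\in L_1((S,T);\Lambda^{\gamma+2})$ by \eqref{eq9140113}, we may differentiate under the Bochner integral in $t$ as often as we like. Hence $u^{(\varepsilon)}$ is $C^\infty$ on $(S,T)$ with values in $\Lambda^{\gamma+2}(\bR^d)$. Next, $\|u^{(\varepsilon)}(t)\|_{\Lambda^{\gamma+2}}\le \|\varphi\|_\infty\varepsilon^{-1}\int_{t-\varepsilon}^t\|\bar u(r)\|\,dr \le N\bM_t(\|\bar u\|_{\Lambda^{\gamma+2}})(t)$. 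The weighted maximal theorem \cite[Theorem 2.2]{DK18} then gives the $\mathbf{\Lambda}^{\gamma+2}_{p,\omega}$ part of \eqref{eq8251612}; for $p=\infty$ this bound is trivial.

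\emph{The fractional derivative.} This is the key step. Using Fubini as in Lemma \ref{lem10122003}(i), we have $I_S^{1-\alpha}(u^{(\varepsilon)})=(I_S^{1-\alpha}u)^{(\varepsilon)}$. Indeed, with zero extension both sides equal $\int\!\int \varphi_\varepsilon(s)k(t-s-r)\bar u(r)\,dr\,ds$, where $k(\tau)=\tau_+^{-\alpha}/\Gamma(1-\alpha)$ is the kernel of $I^{1-\alpha}$ and is supported in $\tau>0$; this is just commutativity of convolutions. Set $w=I_S^{1-\alpha}u$, extended by zero. Because $w$ is absolutely continuous with $w(S)=0$, this extension is absolutely continuous on $(-\infty,T)$ with derivative $\overline{\partial^\alpha_{t,S}u}$, and there is no jump at $S$. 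Consequently $w^{(\varepsilon)}$ is absolutely continuous, vanishes at $S$, and satisfies $\frac{d}{dt}w^{(\varepsilon)}=(\partial_{t,S}^\alpha u)^{(\varepsilon)}$. This shows $u^{(\varepsilon)}\in\bH^{\alpha,\gamma+2}_{p,\omega,0}(S,T)$ with $\partial^\alpha_{t,S}u^{(\varepsilon)}=(\partial^\alpha_{t,S}u)^{(\varepsilon)}$. The same maximal-function bound, now in $\Lambda^\gamma$, completes \eqref{eq8251612}. The constant depends only on $\|\varphi\|_\infty$ and on $p,K_0$.

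\emph{Convergence for $p<\infty$.} By the uniform bound \eqref{eq8251612} applied to both $u$ and $\partial^\alpha_{t,S}u$, it suffices to check convergence on a dense class of $\mathbf{\Lambda}^{\beta}_{p,\omega}(S,T)$, for $\beta=\gamma+2$ and $\beta=\gamma$. For this we use the simple functions $\tilde u_n$ constructed in the proof of Lemma \ref{lem_8241556}, which converge in $\mathbf{\Lambda}_{p,\omega}^\beta$. Each such function is a finite sum $\sum v_m1_{A_m}$ with $v_m\in\Lambda^\beta$, so the problem reduces to the scalar fact $\|(1_A)^{(\varepsilon)}-1_A\|_{L_p(\omega)}\to0$. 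That follows from the density of $C_c$ in $L_p(\omega)$ together with the uniform maximal bound. A standard $\epsilon/3$ argument then gives \eqref{eq8251608}.

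I expect the main obstacle to be the commutation identity and the justification that the zero extension of $w$ stays absolutely continuous. The latter is exactly where the condition $I_S^{1-\alpha}u(S)=0$ is needed.
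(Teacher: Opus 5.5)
Your proof is correct. Its backbone matches the paper's: the pointwise bound of the mollification by $\bM_t$ plus the weighted maximal theorem, and the Fubini identity $I_S^{1-\alpha}u^{(\varepsilon)}=(I_S^{1-\alpha}u)^{(\varepsilon)}$, which gives \eqref{eq8251609}. You depart from the paper in two places.

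\emph{Membership in $\bH_{p,\omega,0}^{\alpha,\gamma+2}(S,T)$.} The paper gets absolute continuity of $I_S^{1-\alpha}u^{(\varepsilon)}$ from smoothness, via $I_S^{1-\alpha}u^{(\varepsilon)}=I_S^{2-\alpha}\frac{d}{dt}u^{(\varepsilon)}$. It gets the zero trace from $u^{(\varepsilon)}\equiv0$ on some $(S,S+\delta)$, which holds because $\varphi$ has compact support in $(0,1)$. You read both properties off the mollified zero extension of $w=I_S^{1-\alpha}u$. Your version makes explicit where the hypothesis $w(S)=0$ enters. Without it the extension jumps at $S$, and $\frac{d}{dt}w^{(\varepsilon)}$ picks up the extra term $w(S)\varphi_\varepsilon(\cdot-S)$. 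The paper's passage to \eqref{eq8251609} uses this fact tacitly.

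\emph{Convergence \eqref{eq8251608}.} The paper shows $v^{(\varepsilon)}(t)\to v(t)$ in $\Lambda^{\gamma'}(\bR^d)$ for a.e.\ $t$, by the vector-valued a.e.\ convergence of approximate identities. It then applies dominated convergence with the majorant $N\bM_t(\|v\|_{\Lambda^{\gamma'}}1_{(S,T)})\in L_p(\omega\,dt)$. You instead combine uniform boundedness of $v\mapsto v^{(\varepsilon)}$ with convergence on simple functions, which reduces everything to $(1_A)^{(\varepsilon)}\to1_A$ in $L_p(\omega\,dt)$. Your route needs only scalar facts, essentially density of $C_c$ in $L_p(\omega\,dt)$. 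The paper's route is shorter once the Bochner-space result is quoted.

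One small phrasing point: the uniform bound you need in the density step is that of the mollification operator on $\mathbf{\Lambda}^{\beta}_{p,\omega}(S,T)$. Your maximal-function estimate supplies this; \eqref{eq8251612} itself is not what is needed there.
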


\begin{proof}
We first note that the zero extension of $u$ is strongly measurable on $(-\infty,T)$ with values in $\Lambda^{\gamma+2}(\bR^d)$. Thus, $u^{(\varepsilon)}$ is well defined. Moreover, it follows immediately from the definition of $u^{(\varepsilon)}$ that $u^{(\varepsilon)}(S,\cdot)=0$.
By the standard mollification argument, the infinite differentiability of $u^{(\varepsilon)}$ follows in the same way as in the real-valued case.

Next, we show that $u^{(\varepsilon)} \in \bH_{p,\omega,0}^{\alpha,\gamma+2}(S,T)$ and \eqref{eq8251612}. Since $u^{(\varepsilon)}$ is infinitely differentiable in $t$, due to \eqref{eq8241804}, one can deduce that $I_S^{1-\alpha}u^{(\varepsilon)}= I_S^{2-\alpha}\frac{d}{dt}u^{(\varepsilon)}$. Thus, $I_S^{1-\alpha}u^{(\varepsilon)}$ is absolutely continuous.
Since $\varphi\in C_c^\infty((0,1))$, there exists $\delta=\delta(\varepsilon)>0$ such that $u^{(\varepsilon)}(t)=0$ for  $t\in(S,S+\delta)$. Thus, $I_S^{1-\alpha}u^{(\varepsilon)}(S)=0$.
To conclude $u^{(\varepsilon)} \in \bH_{p,\omega,0}^{\alpha,\gamma+2}(S,T)$, it remains to show that $u^{(\varepsilon)} \in \mathbf{\Lambda}_{p,\omega}^{\gamma+2}(S,T)$ and $\partial_{t,S}^{\alpha}u^{(\varepsilon)} \in \mathbf{\Lambda}_{p,\omega}^{\gamma}(S,T)$.
By Fubini's theorem, one can prove that $I_S^{1-\alpha}u^{(\varepsilon)}(t) = (I_S^{1-\alpha}u)^{(\varepsilon)}(t)$,
which yields
\begin{equation} \label{eq8251609}
    \partial_{t,S}^{\alpha}u^{(\varepsilon)} = (\partial_{t,S}^{\alpha}u)^{(\varepsilon)}.
\end{equation}
When $p=\infty$, the claims follow immediately.

To conclude the claims when $p\in(1,\infty)$, we note that for $v\in \mathbf{\Lambda}_{p,\omega}^{\gamma'}(S,T)$ with $\gamma'>0$, by a norm inequality for the Bochner integral,
\begin{align} \label{eq9140120}
    \|v^{(\varepsilon)}(t,\cdot)\|_{\Lambda^{\gamma'}} &\leq \int_0^\varepsilon \varphi_{\varepsilon}(s) \|v(t-s,\cdot)\|_{\Lambda^{\gamma'}} ds \nonumber
    \\
    &\leq N \bM_t\left(  \|v(\star,\cdot)\|_{\Lambda^{\gamma'}} 1_{(S,T)}(\star) \right) (t).
\end{align}
Then the claim follows from the weighted maximal function theorem.

Lastly, we prove \eqref{eq8251608}.
By \eqref{eq9140113}, if $v \in \mathbf{\Lambda}_{p,\omega}^{\gamma'}(S,T)$ with $\gamma'>0$, then we have $v\in L_1((S,T), dt; \Lambda^{\gamma'}(\bR^d))$. Thus, by the standard approximation result for mollifications (see, e.g., \cite[Proposition 1.2.32]{HNVW16}), we see that $v^{(\varepsilon)}(t,\cdot)\to v(t,\cdot)$ in $\Lambda^{\gamma'}(\bR^d)$ for almost every $t\in (S,T)$.
By \eqref{eq9140120} and the dominated convergence theorem, we actually have $v^{(\varepsilon)}\to v$ in $\mathbf{\Lambda}_{p,\omega}^{\gamma'}(S,T)$ for $p\in(1,\infty)$.
Therefore, \eqref{eq8251609} yields the desired result.
The lemma is proved.
\end{proof}

\subsection{Statements of the main results} \label{sec_statement}

We first formulate a frequency localized maximum principle for sufficiently regular solutions $u\in C^1([0,T];\Lambda^{\gamma+2}(\bR^d))$. To this end, we introduce a space $X_k$ ($k\in\bN_0$) whose norm is equivalent to the $L_\infty(\bR^d)$-norm on the range of the Littlewood-Paley projection $\Delta_k$.

For $k\in\bN_0$, let
\begin{equation*}
    \cA_0:=\{\xi\in\bR^d:|\xi|\leq2\}, \qquad \cA_k:=\{\xi\in\bR^d:2^{k-1}\leq|\xi|\leq2^{k+1}\}, \quad k\geq1,
\end{equation*}
and define
\begin{equation*}
    X_k := \left\{ h\in L_\infty(\bR^d): \text{supp}(\cF[h]) \subset \cA_k \right\},
\end{equation*}
where the Fourier transform is understood in the sense of
tempered distributions. Then the space $X_k$ is a closed real linear subspace of $L_\infty(\bR^d)$. Indeed, let $h_n\in X_k$ and suppose that $h_n\to h$ in $L_\infty(\bR^d)$. Then for every $\varphi\in\cS(\bR^d)$,
\begin{equation*}
    |\langle h_n-h,\varphi\rangle| \leq \|h_n-h\|_{L_\infty(\bR^d)} \|\varphi\|_{L_1(\bR^d)} \to 0.
\end{equation*}
Thus, $h_n\to h$ in $\cS'(\bR^d)$, and hence $\cF[h_n]\to\cF[h]$ in $\cS'(\bR^d)$. If $\phi\in C_c^\infty(\bR^d\setminus\cA_k)$, then
\begin{equation*}
    \langle \cF[h],\phi \rangle = \lim_{n\to\infty} \langle \cF[h_n],\phi \rangle = 0.
\end{equation*}
Thus, $\text{supp}(\cF[h])\subset\cA_k$, and $h\in X_k$. The claim is proved. 

For $0 < \nu \leq K$, we denote
\begin{equation*}
    \bA_{\nu,K} := \left\{ B\in\bR^{d\times d}: B=B^T, \, \nu I\leq B\leq KI \right\}.
\end{equation*}
Here, $B^T$ denotes the transpose of $B$.
For $B=(b_{ij}) \in \bA_{\nu,K}$, we write $L_B:=\sum_{i,j=1}^d b_{ij}D_{ij}$. Then the associated heat semigroup $e^{rL_B}$ is given by
\begin{equation} \label{eq8162343}
    e^{rL_B}h(x) = p_B(r,\cdot)*h(x), \quad r>0,
\end{equation}
where
\begin{equation*}
    p_B(r,x) := \cF^{-1}[e^{-r\sum_{i,j=1}^d b_{ij}\xi_i\xi_j}](x).
\end{equation*}

The proof of the following proposition is given in Section \ref{sec_proof_frequency}.

\begin{proposition} \label{prop_knorm}
    Let $0<\nu\leq K$. 
    For $k\geq1$ and $\kappa>0$, we define
    \begin{equation} \label{knorm}
\|h\|_k := \sup_{\substack{B\in\bA_{\nu,K} \\ r>0}} e^{2^{2k}\kappa r}\|e^{rL_B}h\|_{L_\infty(\bR^d)}, \qquad h\in X_k.
\end{equation}
Then there exists a constant $c_0=c_0(d,\nu,K,\psi)>0$ such that for every $\kappa\in(0,c_0)$, \eqref{knorm} defines a norm on $X_k$ satisfying
\begin{equation}\label{eq8170137}
    \|h\|_{L_\infty(\bR^d)}\leq\|h\|_k\leq N\|h\|_{L_\infty(\bR^d)},
\end{equation}
where $N=N(d,\nu,K,\psi,\kappa)$.
Consequently, $(X_k,\|\cdot\|_k)$ is a Banach space for every $k\geq1$.
\end{proposition}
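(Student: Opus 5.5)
The plan is to prove the two inequalities in \eqref{eq8170137} first. Once they hold, the norm axioms and completeness follow immediately.

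\emph{Lower bound and norm axioms.} The lower bound comes from letting $r\downarrow0$. For $h\in X_k\subset L_\infty$ and fixed $B$, $e^{rL_B}h\to h$ weakly-$*$ in $L_\infty$, since $p_B(r,\cdot)$ is an approximate identity. Lower semicontinuity of the $L_\infty$ norm under weak-$*$ convergence then gives $\|h\|_{L_\infty}\le\liminf_{r\downarrow0}e^{2^{2k}\kappa r}\|e^{rL_B}h\|_{L_\infty}\le\|h\|_k$. Alternatively, one may set $r=0$ in the supremum.

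Homogeneity and the triangle inequality are inherited from $L_\infty$, because each map $h\mapsto e^{2^{2k}\kappa r}\|e^{rL_B}h\|_{L_\infty}$ is a seminorm and a supremum of seminorms is a seminorm. Definiteness follows from the lower bound. Completeness follows because $X_k$ is closed in $L_\infty$, as shown above, and $\|\cdot\|_k$ is equivalent to $\|\cdot\|_{L_\infty}$ on $X_k$.

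\emph{Upper bound.} This is the main step. For $h\in X_k$, choose $\chi\in C_c^\infty$ with $\chi=1$ on $\{1/2\le|\xi|\le2\}$ and supported in $\{1/4\le|\xi|\le4\}$, and put $\chi_k=\chi(2^{-k}\cdot)$. Then $h=\cF^{-1}[\chi_k]*h$, so
\begin{equation*}
e^{rL_B}h=K_{B,r,k}*h,\qquad \cF[K_{B,r,k}](\xi)=e^{-r\langle B\xi,\xi\rangle}\chi_k(\xi).
\end{equation*}
Rescaling $\xi=2^k\eta$ and setting $s=2^{2k}r$ gives $\|K_{B,r,k}\|_{L_1}=\|G_{B,s}\|_{L_1}$, where $\cF[G_{B,s}](\eta)=e^{-s\langle B\eta,\eta\rangle}\chi(\eta)$. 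It therefore suffices to show
\begin{equation*}
\|G_{B,s}\|_{L_1}\le N e^{-c s},\qquad s>0,\ B\in\bA_{\nu,K},
\end{equation*}
with $c=c(d,\nu,K,\psi)>0$ and $N$ uniform in $B$. We then take $c_0:=c$, and for $\kappa<c_0$ we get
\begin{equation*}
e^{2^{2k}\kappa r}\|e^{rL_B}h\|_{L_\infty}\le Ne^{-(c-\kappa)s}\|h\|_{L_\infty}\le N\|h\|_{L_\infty},
\end{equation*}
uniformly in $k$, $r$ and $B$.

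\emph{Kernel bound.} I would use the standard estimate $\|G\|_{L_1}\lesssim\sum_{|\beta|\le d+1}\|D^\beta_\eta\cF[G]\|_{L_\infty}$. This holds because $\cF[G]$ is supported in the fixed compact set $\{1/4\le|\eta|\le4\}$: integrate by parts to obtain $(1+|x|)^{d+1}|G(x)|\lesssim\sum_{|\beta|\le d+1}\|D^\beta\cF[G]\|_{L_1}$, and note that the $L_1$ norms are controlled by the $L_\infty$ norms on this bounded support.

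On the support, $\langle B\eta,\eta\rangle\ge\nu/16$. Each derivative $D^\beta_\eta\bigl(e^{-s\langle B\eta,\eta\rangle}\chi(\eta)\bigr)$ is bounded by $N(1+s)^{|\beta|}e^{-s\nu/16}$, with $N$ depending only on $d$, $K$ and $\chi$. Hence $\|G_{B,s}\|_{L_1}\le N(1+s)^{d+1}e^{-s\nu/16}\le N'e^{-s\nu/32}$. This gives the bound with $c=\nu/32$ and all constants uniform over $\bA_{\nu,K}$.

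The only delicate point I anticipate is ensuring that the constants are uniform in $B\in\bA_{\nu,K}$. The derivative bounds above make this clear: they depend only on $K$ (through the size of $B$) and on $\nu$ (through the decay rate).
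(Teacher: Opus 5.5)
Your proposal is correct and matches the paper's proof. The upper bound is the paper's rescaled-kernel estimate: an annular cutoff plus derivative bounds uniform over $\bA_{\nu,K}$, where you use $d+1$ derivatives in $L_1$/$L_\infty$ and the paper uses Plancherel with $m>d/2$. Completeness comes from closedness of $X_k$ together with the norm equivalence, as in the paper. For the lower bound you use weak-$*$ lower semicontinuity as $r\downarrow0$, while the paper proves norm convergence $e^{rL_{\nu I}}h\to h$ using the frequency support; both are valid. Drop the aside about ``setting $r=0$'', since the supremum in \eqref{knorm} is taken over $r>0$ only.
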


In the above proposition, we emphasize that the norm \eqref{knorm} depends on the choice of $\kappa$.

We now state our first result. 

\begin{theorem}[A frequency localized maximum principle] \label{thm_frequency}
 Let $T\in(0,\infty)$, $\alpha\in(0,1)$, and $\gamma\in(0,\infty)$. Suppose that the coefficients $a_{ij}=a_{ij}(t)$ are measurable, independent of $x$, and satisfy $a_{ij}(t)=a_{ji}(t)$ and
\begin{equation} \label{eq9281844}
\nu|\xi|^2 \leq \sum_{i,j=1}^d a_{ij}(t)\xi_i\xi_j \leq K|\xi|^2,\qquad \xi\in\bR^d.
\end{equation}
Let $c_0=c_0(d,\nu,K,\psi)>0$ be the constant given in Proposition \ref{prop_knorm}, and fix $\kappa\in(0,c_0)$. For $k\geq1$, we equip $X_k$ with the norm \eqref{knorm} corresponding to $\kappa$, while $X_0$ is equipped with the $L_\infty(\bR^d)$-norm.
 Assume that $u\in C^1([0,T];\Lambda^{\gamma+2}(\bR^d))$ satisfies
\begin{equation*}
\partial_t^\alpha u(t,\cdot)=\sum_{i,j=1}^d a_{ij}(t)D_{ij}u(t,\cdot)+f(t,\cdot), \qquad u(0,\cdot)=0,
\end{equation*}
in $\Lambda^\gamma(\bR^d)$ for almost every $t\in(0,T)$, where $f\in L_\infty((0,T);\Lambda^\gamma(\bR^d))$.
Then for every $k\in\bN_0$, we have $\Delta_k u\in C^1([0,T];X_k)$ and $\Delta_k f\in L_\infty((0,T);X_k)$.
Moreover, for every $k\geq1$,
\begin{equation} \label{eq_frequency}
\partial_t^\alpha \|\Delta_k u(\star,\cdot)\|_k(t) + 2^{2k}\kappa\|\Delta_k u(t,\cdot)\|_k \leq \|\Delta_k f(t,\cdot)\|_k,
\end{equation}
while for $k=0$,
\begin{equation} \label{eq_frequency_low}
\partial_t^\alpha \|\Delta_0 u(\star,\cdot)\|_0(t) \leq \|\Delta_0 f(t,\cdot)\|_0.
\end{equation}
Both inequalities hold for almost every $t\in(0,T)$.
Here, for $k\geq1$, the norm on $X_k$ is the one defined in \eqref{knorm} with $\nu$ and $K$ given by \eqref{eq9281844}.
\end{theorem}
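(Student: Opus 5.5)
The plan has three parts: a regularity step, a pointwise‑in‑time dissipation inequality for the norm $\|\cdot\|_k$, and a convexity inequality for the Caputo derivative of a norm.

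\textbf{Regularity.} Since $\Delta_k$ commutes with time operations and is bounded from $\Lambda^{\gamma+2}$ and $\Lambda^\gamma$ into $L_\infty$ with range in $X_k$, the memberships $\Delta_k u\in C^1([0,T];X_k)$ and $\Delta_k f\in L_\infty((0,T);X_k)$ follow at once. The norm equivalence \eqref{eq8170137} of Proposition~\ref{prop_knorm} makes the choice of norm on $X_k$ irrelevant here. Applying $\Delta_k$ to the equation gives, for a.e.\ $t$,
\begin{equation*}
\partial_t^\alpha v = L_{A(t)} v + g,\qquad v:=\Delta_k u,\quad g:=\Delta_k f,
\end{equation*}
as an identity in $X_k$.

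\textbf{Dissipation inequality.} The key structural fact is that $\|\cdot\|_k$ is defined as a supremum over the whole family of semigroups $e^{rL_B}$, with $B\in\bA_{\nu,K}$ and $r>0$. Since $A(t)\in\bA_{\nu,K}$, for $h\in X_k$ and small $\delta>0$,
\begin{equation*}
\|e^{\delta L_{A(t)}}h\|_k=\sup_{B,r} e^{2^{2k}\kappa r}\|e^{rL_B}e^{\delta L_{A(t)}}h\|_{L_\infty}.
\end{equation*}
Because $e^{rL_B}e^{\delta L_{A}}=e^{L_{rB+\delta A}}$ and $rB+\delta A=(r+\delta)B'$ with $B'\in\bA_{\nu,K}$ by convexity, we get
\begin{equation*}
\|e^{\delta L_{A(t)}}h\|_k\le e^{-2^{2k}\kappa\delta}\|h\|_k .
\end{equation*}
For $h\in X_k$, $e^{\delta L_A}h=h+\delta L_Ah+O(\delta^2)$ in $L_\infty$, by Bernstein's inequality on the annulus. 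Hence
\begin{equation*}
\|h+\delta L_{A(t)}h\|_k\le (1-2^{2k}\kappa\delta)\|h\|_k+O(\delta^2),
\end{equation*}
with the $O(\delta^2)$ term controlled via \eqref{eq8170137}. This is the dissipativity of $L_{A(t)}$ on $(X_k,\|\cdot\|_k)$ with rate $2^{2k}\kappa$. For $k=0$ with the $L_\infty$ norm, the same computation gives rate $0$, since the heat semigroup is a contraction.

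\textbf{Caputo inequality for the norm.} Set $\phi(t)=\|v(t)\|_k$. It is Lipschitz, since $v\in C^1$. I need
\begin{equation*}
\partial_t^\alpha \phi(t)\le \langle \ell_t,\partial_t^\alpha v(t)\rangle
\end{equation*}
for a norming functional $\ell_t$ of $v(t)$, that is, $\|\ell_t\|_*=1$ and $\ell_t(v(t))=\phi(t)$. I would prove it from the representation
\begin{equation*}
\partial_t^\alpha v(t)=\frac{v(t)}{\Gamma(1-\alpha)t^\alpha}+\frac{\alpha}{\Gamma(1-\alpha)}\int_0^t\frac{v(t)-v(s)}{(t-s)^{1+\alpha}}\,ds,
\end{equation*}
valid for $C^1$ functions with $v(0)=0$, and the analogous formula for $\phi$. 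Applying $\ell_t$ and using $\ell_t(v(s))\le\phi(s)$ yields $\ell_t(\partial_t^\alpha v)\ge\partial_t^\alpha\phi$. Then I pair with the equation:
\begin{equation*}
\langle\ell_t,L_Av\rangle=\lim_{\delta\downarrow0}\delta^{-1}\bigl(\ell_t(v+\delta L_Av)-\phi\bigr)\le\lim_{\delta\downarrow0}\delta^{-1}\bigl(\|v+\delta L_Av\|_k-\phi\bigr)\le -2^{2k}\kappa\phi,
\end{equation*}
and $\langle\ell_t,g\rangle\le\|g\|_k$. Combining the three bounds gives \eqref{eq_frequency}, and \eqref{eq_frequency_low} follows the same way with rate $0$. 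Norming functionals exist by Hahn--Banach, and measurability in $t$ is not needed because the argument is pointwise.

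The step I expect to be the main obstacle is justifying the Caputo representation and the convexity inequality at the level of regularity available. The function $\phi$ is only Lipschitz, and $\partial_t^\alpha\phi=\frac{d}{dt}I^{1-\alpha}\phi$ must agree a.e.\ with the Marchaud-type formula. I would handle this using absolute continuity of $\phi$ and Fubini, working at Lebesgue points.
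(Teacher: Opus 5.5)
Your proposal is correct and follows essentially the same route as the paper. The common steps are: norm equivalence for the regularity claims; the contraction $\|e^{\delta L_{A}}h\|_k\le e^{-2^{2k}\kappa\delta}\|h\|_k$ obtained from convexity of $\bA_{\nu,K}$; dissipativity of $L_{A(t)}$ tested against a Hahn--Banach norming functional; and the Marchaud-type representation, proved by absolute continuity and Fubini exactly as you suggest, which gives $\partial_t^\alpha\|v\|_k\le \ell_t(\partial_t^\alpha v)$. The only cosmetic difference is that you control the generator limit through an $O(\delta^2)$ bound on $\|h+\delta L_Ah\|_k$, whereas the paper shows directly that $(e^{sL_B}h-h)/s\to L_Bh$ in $X_k$ via $L_1$ bounds on the multiplier kernels.
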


We now present the assumptions on the coefficients.

\begin{assumption}[$\gamma$] \label{asscoeff}
There exist positive constants $\nu$ and $K$ satisfying the following conditions:
\begin{enumerate}
\item For all $(t,x)\in(0,\infty)\times\bR^d$, $a_{ij}=a_{ji}$ and
\begin{equation*}
\nu|\xi|^2\leq\sum_{i,j=1}^da_{ij}(t,x)\xi_i\xi_j\leq K|\xi|^2,\qquad \xi\in\bR^d.
\end{equation*}

\item The coefficients $a_{ij}$, $b_i$, and $c$ are $\Lambda^\gamma(\bR^d)$-valued strongly measurable functions such that
\begin{equation*}
\sum_{i,j=1}^d\|a_{ij}\|_{L_\infty((0,\infty);\Lambda^\gamma(\bR^d))}+\sum_{i=1}^d\|b_i\|_{L_\infty((0,\infty);\Lambda^\gamma(\bR^d))}+\|c\|_{L_\infty((0,\infty);\Lambda^\gamma(\bR^d))}\leq K.
\end{equation*}
\end{enumerate}
\end{assumption}

Finally, we state our solvability result.

\begin{theorem} \label{thm_main}
Let $T\in(0,\infty)$, $\alpha\in(0,1)$, $\gamma\in(0,\infty)$, and $p\in(1,\infty]$. Let $\omega\in A_p(\bR)$ with $[\omega]_{A_p(\bR)}\leq K_0$ (put $\omega\equiv1$ and $K_0=1$ when $p=\infty$).
Suppose that Assumption \ref{asscoeff} holds. Then for any $f \in \mathbf{\Lambda}_{p,\omega}^\gamma(T)$, there exists a unique solution $u \in \bH_{p,\omega,0}^{\alpha,\gamma+2}(T)$ to \eqref{eq_intro}.
Moreover,
\begin{equation} \label{eq_main_est}
\|\partial_t^\alpha u\|_{\mathbf{\Lambda}_{p,\omega}^\gamma(T)} + \|u\|_{\mathbf{\Lambda}_{p,\omega}^{\gamma+2}(T)} \leq  N \|f\|_{\mathbf{\Lambda}_{p,\omega}^\gamma(T)},
\end{equation}
where $N$ depends only on $d$, $\alpha$, $\gamma$, $p$, $K_0$, $\nu$, $K$, and $T$.
\end{theorem}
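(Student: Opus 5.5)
The proof of Theorem \ref{thm_main} follows the route outlined in the introduction: an a priori estimate for $x$-independent coefficients via Theorem \ref{thm_frequency}, extension to variable coefficients by localization, then existence by the method of continuity.

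\textbf{Step 1: a priori estimate for $a_{ij}=a_{ij}(t)$, $b=c=0$.} By Lemma \ref{lem_dense} I may assume $u$ is smooth in time; for $p=\infty$ I use the uniform bound \eqref{eq8251612} and lower semicontinuity of the norm. I apply Theorem \ref{thm_frequency} to get \eqref{eq_frequency} for each $k\ge1$ and \eqref{eq_frequency_low} for $k=0$. The comparison principle for fractional differential inequalities, announced as Corollary \ref{cor_comparison}, then gives
\begin{equation*}
\|\Delta_k u(t)\|_k\le \int_0^t (t-s)^{\alpha-1}E_{\alpha,\alpha}\bigl(-2^{2k}\kappa (t-s)^\alpha\bigr)\|\Delta_k f(s)\|_k\,ds .
\end{equation*}
The kernel $G_k(r)=r^{\alpha-1}E_{\alpha,\alpha}(-\lambda r^\alpha)$, with $\lambda=2^{2k}\kappa$, is positive and decreasing, and $\int_0^\infty G_k=\lambda^{-1}$. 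Hence $G_k*g\le N2^{-2k}\bM_t g$, uniformly in $k$, by the same dyadic decomposition as in Lemma \ref{lem_8241556}. By \eqref{eq8170137},
\begin{equation*}
2^{k(\gamma+2)}\|\Delta_k u(t)\|_{L_\infty}\le N\,\bM_t\Bigl(\|f\|_{\Lambda^\gamma}1_{(0,T)}\Bigr)(t).
\end{equation*}
For $k=0$, the estimate $\|\Delta_0u\|\le I^\alpha\|\Delta_0 f\|$ together with \eqref{eq8271405} does the job. Taking the supremum over $k$ and applying the weighted maximal theorem bounds $\|u\|_{\mathbf\Lambda^{\gamma+2}_{p,\omega}}$. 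The bound for $\partial_t^\alpha u$ then follows directly from the equation.

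\textbf{Step 2: variable coefficients.} I freeze $a_{ij}$ in $x$ at points $x_0$. Let $\zeta$ be a spatial cutoff of radius $\delta$. Then $u\zeta$ solves the frozen equation with right-hand side
\begin{equation*}
f\zeta+(a-a(x_0))D^2u\,\zeta+\text{commutator terms involving } Du,\ u .
\end{equation*}
Because $\gamma>0$, the product estimate in $\Lambda^\gamma$ gives
\begin{equation*}
\|(a-a(x_0))\zeta D^2u\|_{\Lambda^\gamma}\le \bigl(N\delta^{\gamma'}\|u\|_{\Lambda^{\gamma+2}}+N_\delta\|u\|_{\Lambda^{\gamma+1}}\bigr),
\end{equation*}
with $\gamma'=\min(\gamma,1)$. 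To justify this I will pass to a slightly smaller exponent, or use $\Lambda^\gamma$ interpolation. Since $\Lambda^\gamma$ norms are sup-type, they localize with a supremum over $x_0$ rather than a sum. Absorbing the small term and interpolating the lower-order terms as $\|u\|_{\Lambda^{\gamma+1}}\le\varepsilon\|u\|_{\Lambda^{\gamma+2}}+N_\varepsilon\|u\|_{L_\infty}$ reduces the estimate to controlling $\|u\|_{\mathbf\Lambda^{0}}$-type quantities. The $b$ and $c$ terms are handled the same way.

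\textbf{Main obstacle: the leftover lower-order term.} I expect this to be the hardest part, because the fractional derivative is nonlocal in time. I plan to bound the leftover term by $N\,I^{\alpha}$ of the full norm, using $u=I^\alpha\partial_t^\alpha u$ as in \eqref{eq9290128} and \eqref{eq8271405}. Two ways to close the argument:
\begin{enumerate}
\item Partition $(0,T)$ into short intervals, using the factor $(T-S)^\alpha$ in \eqref{eq8241430}. Here the memory of earlier intervals enters the right-hand side as a known term.
\item Use a Gronwall-type iteration for fractional integrals in the weighted maximal setting.
\end{enumerate}
Either way, the constant depends on $T$.

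\textbf{Step 3: existence and uniqueness.} Uniqueness is immediate from the a priori estimate. For existence with $L=\Delta$, I build the solution from the fractional heat kernel and verify that it lies in $\bH^{\alpha,\gamma+2}_{p,\omega,0}(T)$, first for smooth $f$. For $p<\infty$ I then use density; for $p=\infty$ I use weak-$*$ compactness together with the uniform estimate. Finally, the method of continuity applied to $(1-\tau)\Delta+\tau(\text{full operator})$ on the Banach space $\bH^{\alpha,\gamma+2}_{p,\omega,0}(T)$ gives solvability for all $\tau\in[0,1]$.
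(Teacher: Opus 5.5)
Your architecture matches the paper's: Theorem \ref{thm_frequency} with Corollary \ref{cor_comparison}, a bound of the Mittag-Leffler convolution by $\bM_t$, freezing in $x$, short time intervals, and continuity from $\Delta$. Your kernel argument is a valid substitute for the paper's bound $E_{\alpha,\alpha}(-r)\le N(1+r)^{-2}$: it uses that $G_k$ is positive and decreasing (by complete monotonicity of $E_\alpha(-\lambda t^\alpha)$) and that $\int_0^\infty G_k=\lambda^{-1}$.

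\textbf{Spatial localization.} What Step 2 leaves out, and what is essential for $p<\infty$, is where the supremum over $x_0$ sits. If you apply the integrated bound of Step 1 to each $u\zeta_{x_0}$ and then take $\sup_{x_0}$, the left side is $\sup_{x_0}\|u\zeta_{x_0}\|_{\mathbf{\Lambda}^{\gamma+2}_{p,\omega}(T)}$. This does not dominate $\|u\|_{\mathbf{\Lambda}^{\gamma+2}_{p,\omega}(T)}$, because \eqref{locali} holds at fixed time and the supremum must sit inside the time integral. So the absorption does not close. The paper keeps Step 1 in the pointwise form \eqref{eq8171625}. For $v^y=v\zeta^y_r$, every term on the right is then bounded, uniformly in $y$, by $N\rho_\gamma(r)\bM_tV+N(r)\bM_t(\cdots)$, where $V(t)=\sup_y\|v^y(t,\cdot)\|_{\Lambda^{\gamma+2}(\bR^d)}$ is measurable by continuity in $y$. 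One therefore takes $\sup_y$ for a.e.\ $t$ first, and only then absorbs via the weighted maximal theorem.

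\textbf{Time partition.} Your phrase ``memory enters as a known term'' needs a cutoff, since the restriction of $u$ to $(s_k,s_{k+1})$ is not in $\bH^{\alpha,\gamma+2}_{p,\omega,0}(s_k,s_{k+1})$. The paper uses $u\eta_k$, with $\eta_k=0$ on $(-\infty,s_{k-1}]$ and $\eta_k=1$ on $[s_k,\infty)$. It needs an estimate on $(s_{k-1},s_{k+1})$ with constants independent of the interval. The commutator satisfies $\|h_k(t,\cdot)\|_{\Lambda^\gamma(\bR^d)}\le N\delta^{-\alpha}\bM_t(\|u\|_{\Lambda^\gamma}1_{(0,s_k)})(t)$, and the factor $\delta^{-\alpha}$ is cancelled by the $\delta^\alpha$ from Lemma \ref{lem8080013}.

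\textbf{Existence.} In Step 3, ``first for smooth $f$, then density'' fails if smoothness in $x$ is meant, because smooth functions are not dense in $\Lambda^\gamma(\bR^d)$. Only time regularization is available: $C^1([0,T];\Lambda^\gamma(\bR^d))$ is dense when $p<\infty$. So you must show the heat-kernel solution lies in $\bH^{\alpha,\gamma+2}_{p,\omega,0}(T)$ for $f$ that is rough in $x$. The paper does this in Lemma \ref{lem8281644}: it works with the $x$-smooth pieces $\Delta_k f$, obtains \eqref{eq9231902}, and recovers $\Lambda^{\gamma+2}$ regularity through $(1-\Delta)^{-1}$.

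For $p=\infty$, weak-$*$ compactness is not safe. A weak-$*$ limit in $L_\infty((0,T);\Lambda^{\gamma+2}(\bR^d))$ need not be strongly measurable (compare the Remark in Section \ref{sec_main_results}), so it need not lie in $\bH^{\alpha,\gamma+2}_{\infty,0}(T)$. Instead, use $\mathbf{\Lambda}^\gamma_\infty(T)\subset\mathbf{\Lambda}^\gamma_{q,1}(T)$, solve in $\bH^{\alpha,\gamma+2}_{q,1,0}(T)$, and read off the $L_\infty$ bound from \eqref{eq8171625}.
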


\section{A frequency localized maximum principle} \label{sec_max}

In this section, we prove Proposition \ref{prop_knorm} and the frequency localized maximum principle in Theorem \ref{thm_frequency}. We first collect some auxiliary results.

\subsection{Auxiliary results}

\begin{lemma} \label{lem8170201}
Let $(X,\|\cdot\|_X)$ be a real Banach space and let $0\neq h\in X$. Then there exists a continuous linear functional $l_h\in X^*$ such that
\begin{equation} \label{eq8170201}
    \|l_h\|_{X^*}=1, \qquad l_h(h)=\|h\|_X.
\end{equation}
\end{lemma}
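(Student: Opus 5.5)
This is the Hahn--Banach theorem in its norm-attaining form, and the plan is to derive it from the analytic Hahn--Banach extension theorem for real vector spaces. First, on the one-dimensional subspace $Y:=\{\lambda h:\lambda\in\bR\}\subset X$, I define $l_0(\lambda h):=\lambda\|h\|_X$. This is well defined because $h\neq0$, so the representation $\lambda h$ of an element of $Y$ is unique. It is clearly linear on $Y$, and it satisfies
\begin{equation*}
|l_0(\lambda h)|=|\lambda|\,\|h\|_X=\|\lambda h\|_X ,
\end{equation*}
so $l_0\leq p$ on $Y$, where $p(x):=\|x\|_X$ is a sublinear functional on $X$.

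Next, I apply the real Hahn--Banach theorem, which relies on Zorn's lemma and needs no separability. It gives a linear extension $l_h:X\to\bR$ with $l_h|_Y=l_0$ and $l_h(x)\leq\|x\|_X$ for all $x\in X$. Replacing $x$ by $-x$ gives $|l_h(x)|\leq\|x\|_X$. Hence $l_h$ is continuous with $\|l_h\|_{X^*}\leq1$.

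Finally, the equality $l_h(h)=l_0(h)=\|h\|_X$ together with $\|h\|_X>0$ forces $\|l_h\|_{X^*}\geq l_h(h)/\|h\|_X=1$. This yields \eqref{eq8170201}.

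There is no genuine obstacle here. The only point requiring care is that the spaces of interest, namely $X_k$ with the norm \eqref{knorm} or subspaces of $L_\infty(\bR^d)$, are non-separable. For this reason the proof should invoke the general (Zorn's lemma) version of Hahn--Banach rather than a constructive, separable one. Completeness of $X$ is not actually used; the argument works for any real normed space.
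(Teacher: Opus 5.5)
Your proof is correct and follows essentially the same route as the paper: define the functional $\lambda h\mapsto\lambda\|h\|_X$ on the span of $h$, extend it by Hahn--Banach with $\|l_h\|_{X^*}\leq1$, and use $l_h(h)=\|h\|_X$ with $h\neq0$ to get $\|l_h\|_{X^*}=1$. Your side remarks that completeness is not needed and that the Zorn's lemma version of Hahn--Banach is required are accurate but not essential to the argument.
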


\begin{proof}
This is in fact the Hahn-Banach theorem.
Let $Y_h := \{ch: c\in\bR\} \subset X$, and define $l_{h,0}:Y_h\to\bR$ by
\begin{equation*}
    l_{h,0}(ch):=c\|h\|_X, \quad c\in\bR.
\end{equation*}
Since $h\neq0$, the functional $l_{h,0}$ is well-defined and linear. 
Moreover, 
$$
\|l_{h,0}\|_{(Y_h,\|\cdot\|_X)^*}=1,\quad \text{and}\quad l_{h,0}(h)=\|h\|_X.
$$ 
By the  Hahn-Banach theorem, $l_{h,0}$ extends to a continuous linear functional $l_h$ on $X$ satisfying $\|l_h\|_{X^*} \leq 1$.
Since $l_h(h)=\|h\|_X$, we also have $\|h\|_X=|l_h(h)| \leq \|l_h\|_{X^*} \|h\|_X$, which yields $\|l_h\|_{X^*}=1$. The lemma is proved.
\end{proof}

\begin{lemma} \label{lem8170203}
Let $(X,\|\cdot\|_X)$ be a real Banach space and $\alpha\in(0,1)$. For $u\in C^1([0,T];X)$, we denote
\begin{equation*}
    v(t) := \|u(t)\|_X.
\end{equation*}
Fix $t\in(0,T]$. If $v(t)>0$, let $l_t\in X^*$ satisfy
\begin{equation*}
    \|l_t\|_{X^*}=1, \qquad l_t(u(t))=v(t).
\end{equation*}
Then we have
\begin{equation}\label{eq8170206}
    \partial_t^\alpha v(t)\leq l_t(\partial_t^\alpha u(t)).
\end{equation}
If $v(t)=0$, then \eqref{eq8170206} holds with $l_t=0$.
\end{lemma}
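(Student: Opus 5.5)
We prove \eqref{eq8170206} from an explicit formula for the Caputo derivative of a $C^1$ function. For $w\in C^1([0,T];Y)$, with $Y=X$ or $Y=\bR$, integration by parts in $I^{1-\alpha}(w-w(0))$ gives, for $t\in(0,T]$,
\begin{equation*}
\partial_t^\alpha w(t)=\frac{1}{\Gamma(1-\alpha)}\left(\frac{w(t)-w(0)}{t^\alpha}+\alpha\int_0^t\frac{w(t)-w(s)}{(t-s)^{1+\alpha}}ds\right).
\end{equation*}
The integral converges absolutely because $\|w(t)-w(s)\|_Y\leq \|w'\|_\infty (t-s)$. To derive this, write $\partial_t^\alpha w=I^{1-\alpha}w'$ (valid since $w\in C^1$). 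Then
\begin{equation*}
\Gamma(1-\alpha)\,I^{1-\alpha}w'(t)=\int_0^t (t-s)^{-\alpha}\frac{d}{ds}\big(w(s)-w(t)\big)ds .
\end{equation*}
Integrate by parts on $[0,t-\epsilon]$ and let $\epsilon\to0$. The boundary term at $s=t-\epsilon$ is $O(\epsilon^{1-\alpha})\to0$, and the term at $s=0$ gives $t^{-\alpha}(w(t)-w(0))$.

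The function $v=\|u(\cdot)\|_X$ is Lipschitz, with $|v(t)-v(s)|\leq\|u(t)-u(s)\|_X$. So the same formula gives a meaningful $\partial_t^\alpha v(t)$, and it agrees with $\frac{d}{dt}I^{1-\alpha}(v-v(0))$ at every point $t$ where the latter is defined. Equivalently, one derives the formula for absolutely continuous $v$ by the identical integration by parts, which gives it almost everywhere. If the paper requires pointwise meaning, I will take this formula as the interpretation, valid at every $t$.

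Now fix $t$ with $v(t)>0$ and $l_t$ as in the statement. Since $\|l_t\|_{X^*}=1$ and $l_t(u(t))=v(t)$, for every $s\in[0,t]$ we have
\begin{equation*}
v(t)-v(s)=l_t(u(t))-\|u(s)\|_X\leq l_t(u(t))-l_t(u(s))=l_t\big(u(t)-u(s)\big),
\end{equation*}
and in particular $v(t)-v(0)\leq l_t(u(t)-u(0))$. The kernels $t^{-\alpha}$ and $\alpha(t-s)^{-1-\alpha}$ are positive. Plugging these inequalities into the formula therefore gives
\begin{equation*}
\partial_t^\alpha v(t)\leq \frac{1}{\Gamma(1-\alpha)}\left(\frac{l_t(u(t)-u(0))}{t^\alpha}+\alpha\int_0^t\frac{l_t(u(t)-u(s))}{(t-s)^{1+\alpha}}ds\right).
\end{equation*}
Since $l_t$ is bounded and linear, it commutes with the Bochner integral. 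The right-hand side is then exactly $l_t(\partial_t^\alpha u(t))$.

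If $v(t)=0$, then $v(t)-v(s)=-v(s)\leq0$ for all $s$. Every term in the formula is therefore nonpositive, so $\partial_t^\alpha v(t)\leq0=l_t(\partial_t^\alpha u(t))$ with $l_t=0$.

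The main obstacle is the first step: justifying the pointwise representation formula for the merely Lipschitz scalar function $v$, and checking that it matches the paper's definition $\frac{d}{dt}I^{1-\alpha}(v-v(0))$. I would handle this by proving the formula for absolutely continuous functions via the integration by parts above. This uses $|v(s)-v(t)|(t-s)^{-\alpha}\to0$ as $s\to t$, which holds by the Lipschitz bound. I would then note that the resulting expression is finite and defined for every $t\in(0,T]$.
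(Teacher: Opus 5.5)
Your proposal is correct and follows the paper's proof. Both arguments establish the representation \eqref{eqMarch} for $u$ and for the Lipschitz function $v$, then insert $v(t)-v(s)\le l_t(u(t)-u(s))$ using positivity of the kernels and linearity of $l_t$, with the same sign argument when $v(t)=0$. The only difference is that you derive \eqref{eqMarch} by integrating by parts on $[0,t-\epsilon]$ rather than by the paper's two Fubini computations. Your hedge about pointwise meaning can be dropped: since $v'\in L_\infty$, $I^{1-\alpha}v'$ is continuous, so $I^{1-\alpha}(v-v(0))=I^{2-\alpha}v'=I^1I^{1-\alpha}v'$ is $C^1$ with derivative $I^{1-\alpha}v'(t)$ at every $t$.
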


\begin{proof}
We first notice that $v$ is Lipschitz continuous on $[0,T]$, and the existence of $l_t$ follows from Lemma \ref{lem8170201}. For every $s\in[0,t)$,
\begin{align*}
    v(t)-v(s)&=l_t(u(t))-\|u(s)\|_X
    \\
    &\leq l_t(u(t))-l_t(u(s))=l_t(u(t)-u(s)).
\end{align*}

We claim that
\begin{align} \label{eqMarch}
    \partial_t^\alpha v(t) =\frac{1}{\Gamma(1-\alpha)} \left[ \frac{v(t)-v(0)}{t^\alpha} +\alpha\int_0^t \frac{v(t)-v(s)}{(t-s)^{1+\alpha}} ds \right]
\end{align}
(see also \cite[Remark 3.7]{KW25}).
Using
\begin{equation*}
    v(s)-v(0)=\int_0^s v'(r) dr
\end{equation*}
and Fubini's theorem,
\begin{align*}
I_0^{1-\alpha}(v-v(0))(t) &= \frac{1}{\Gamma(1-\alpha)} \int_0^t (t-s)^{-\alpha} \int_0^s v'(r) drds
\\
&= \frac{1}{\Gamma(2-\alpha)} \int_0^t (t-r)^{1-\alpha}v'(r) dr,
\end{align*}
and hence
\begin{equation} \label{eq9142031}
    \partial_t^\alpha v(t) = \frac{1}{\Gamma(1-\alpha)} \int_0^t (t-r)^{-\alpha}v'(r) dr.
\end{equation}
On the other hand, by $v(t)-v(s)=\int_s^t v'(r) dr$ and Fubini's theorem,
\begin{align*}
    \alpha\int_0^t
\frac{v(t)-v(s)}{(t-s)^{1+\alpha}} ds &= \int_0^t \left((t-r)^{-\alpha}-t^{-\alpha}\right)v'(r) dr
\\
&= \int_0^t(t-r)^{-\alpha}v'(r) dr - \frac{v(t)-v(0)}{t^\alpha}.
\end{align*}
This together with \eqref{eq9142031} proves the claim.

Due to \eqref{eqMarch}, we obtain
\begin{align*}
    \partial_t^\alpha v(t) \leq \frac{1}{\Gamma(1-\alpha)}\left[\frac{l_t(u(t)-u(0))}{t^\alpha}+\alpha\int_0^t\frac{l_t(u(t)-u(s))}{(t-s)^{1+\alpha}}ds\right] =l_t(\partial_t^\alpha u(t)).
\end{align*}
If $v(t)=0$, then $v(t)-v(s)=-v(s)\leq0$ for every $s<t$. Thus, again by \eqref{eqMarch}, we have $\partial_t^\alpha v(t)\leq0$, and thus \eqref{eq8170206} follows from $l_t=0$.
The lemma is proved.
\end{proof}

Recall that
\begin{equation*}
    \cA_0:=\{\xi\in\bR^d:|\xi|\leq2\}, \qquad \cA_k:=\{\xi\in\bR^d:2^{k-1}\leq|\xi|\leq2^{k+1}\}, \quad k\geq1,
\end{equation*}
\begin{equation*}
    X_k := \left\{ h\in L_\infty(\bR^d): \text{supp}(\cF[h]) \subset \cA_k \right\},
\end{equation*}
\begin{equation*}
    \bA_{\nu,K} := \left\{ B\in\bR^{d\times d}: B=B^T, \quad \nu I\leq B\leq KI \right\}, \quad 0 < \nu \leq K,
\end{equation*}
and $e^{rL_B}$ is the semigroup generated by $L_B$, where $B=(b_{ij}) \in \bA_{\nu,K}$.

\begin{lemma} \label{lem8161941}
Let $k\geq1$, $B\in\bA_{\nu,K}$, $r>0$, and $h\in L_\infty(\bR^d)$ satisfying $\text{supp}(\cF[h])\subset\cA_k$. Then there exist constants $N\geq1$ and $c_0>0$, depending only on $d$, $\nu$, $K$, and $\psi$, such that
\begin{equation*}
    \|e^{rL_B}h\|_{L_\infty(\bR^d)} \leq N e^{-2^{2k}c_0r} \|h\|_{L_\infty(\bR^d)}.
\end{equation*}
\end{lemma}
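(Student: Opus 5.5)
We reduce to a kernel estimate by inserting a slightly enlarged frequency cutoff. Fix $\chi\in C_c^\infty(\bR^d)$ with $\chi=1$ on $\{1/2\leq|\xi|\leq2\}$ and $\text{supp}\,\chi\subset\{1/4\leq|\xi|\leq4\}$. Set $\chi_k(\xi):=\chi(2^{-k}\xi)$. Since $\text{supp}(\cF[h])\subset\cA_k$ and $\chi_k=1$ on a neighborhood of $\cA_k$, we have $h=\cF^{-1}[\chi_k]\ast h$. Hence
\begin{equation*}
e^{rL_B}h = K_{k,r,B}\ast h, \qquad K_{k,r,B}:=\cF^{-1}\bigl[e^{-r\langle B\xi,\xi\rangle}\chi_k(\xi)\bigr],
\end{equation*}
and by Young's inequality it suffices to show
\begin{equation*}
\|K_{k,r,B}\|_{L_1(\bR^d)}\leq Ne^{-2^{2k}c_0 r}.
\end{equation*}

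Next rescale. The change of variables $\xi=2^k\eta$, $x=2^{-k}y$ preserves the $L_1$ norm, so $\|K_{k,r,B}\|_{L_1}=\|G_{s,B}\|_{L_1}$ with $s:=2^{2k}r$ and $G_{s,B}:=\cF^{-1}[e^{-s\langle B\eta,\eta\rangle}\chi(\eta)]$. It therefore remains to prove
\begin{equation*}
\|G_{s,B}\|_{L_1}\leq Ne^{-c_0 s}
\end{equation*}
uniformly in $s>0$ and $B\in\bA_{\nu,K}$. For this we use the standard bound $\|\cF^{-1}m\|_{L_1}\lesssim \sum_{|\beta|\leq d+1}\|D^\beta m\|_{L_\infty}$ for $m$ supported in the fixed compact set $\{1/4\leq|\eta|\leq4\}$. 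This follows by writing $|\cF^{-1}m(x)|\lesssim(1+|x|)^{-d-1}\sum_{|\beta|\leq d+1}\|D^\beta m\|_{L_1}$ and noting that the $L_1$ and $L_\infty$ norms are comparable on a bounded support.

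The remaining work is the derivative bound for $m=e^{-s\langle B\eta,\eta\rangle}\chi$. Each derivative $D^\beta$ falling on the exponential produces a polynomial in $s$ and $\eta$ of degree at most $|\beta|$ in $s$, with coefficients bounded in terms of $K$. On the support we have $|\eta|\geq1/4$, so $\langle B\eta,\eta\rangle\geq\nu/16$. This gives
\begin{equation*}
|D^\beta m|\leq N(1+s)^{d+1}e^{-s\nu/16}\leq N'e^{-s\nu/32}.
\end{equation*}
Here $N$ and $N'$ depend only on $d,\nu,K,\chi$. Taking $c_0=\nu/32$ then yields the lemma. Since $\chi$ can be fixed once and for all (its choice is tied to $\psi$ only through the supports), the dependence on $\psi$ is harmless.

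There is no real obstacle in this argument. The one point requiring care is that the constant must be uniform over all $B\in\bA_{\nu,K}$ and all $k$. The rescaling step ensures uniformity in $k$, and uniformity in $B$ comes from the explicit dependence of the derivative bounds on $K$ and $\nu$ alone.
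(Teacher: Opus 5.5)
Your proof is correct and follows the paper's argument essentially step by step: the same cutoff $\chi_k$, the same rescaling to $s=2^{2k}r$, and the same derivative bound $(1+s)^{|\beta|}e^{-s\nu/16}$ coming from $\langle B\eta,\eta\rangle\geq\nu/16$ on $\text{supp}\,\chi$; the only difference, which is immaterial, is that you control $\|\cF^{-1}m\|_{L_1}$ through pointwise decay using $d+1$ derivatives, whereas the paper uses a Cauchy--Schwarz/Plancherel bound with $m>d/2$ derivatives. One small wording point: with $\chi=1$ only on the closed annulus $\{1/2\leq|\xi|\leq2\}$, $\chi_k$ equals $1$ on $\cA_k$ but not on a neighborhood of it, which is harmless (take $\chi=1$ on a slightly larger annulus, or note that $\chi_k-1$ vanishes to infinite order on $\cA_k$).
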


\begin{proof}
Let $\chi\in C_c^\infty(\bR^d)$ such that $\text{supp}(\chi)\subset \{1/4\leq|\xi|\leq 4\}$ and $\chi=1$ if $1/2\leq|\xi|\leq2$, and set $\chi_k(\xi):=\chi(2^{-k}\xi)$. Then by \eqref{eq8162343},
\begin{align*}
    \|e^{rL_B}h\|_{L_\infty(\bR^d)} &= \|(\cF^{-1}[\chi_k])*(e^{rL_B}h)\|_{L_\infty(\bR^d)}
    \\
    &\leq \|(\cF^{-1}[\chi_k])*p_B(r,\cdot)\|_{L_1(\bR^d)} \|h\|_{L_\infty(\bR^d)}.
\end{align*}
By the change of variables $\xi\to2^k\xi$,
\begin{equation*}
    \|(\cF^{-1}[\chi_k])*p_B(r,\cdot)\|_{L_1(\bR^d)} \leq \left\|\cF^{-1}\left[e^{-r2^{2k} \sum_{i,j=1}^d b_{ij}\xi_i\xi_j}\chi(\xi)\right](\cdot) \right\|_{L_1(\bR^d)}.
\end{equation*}
Thus, it suffices to show that
\begin{equation}\label{eq8170023}
    \left\|K_{\tau}\right\|_{L_1(\bR^d)}\leq Ne^{-c_0\tau},
\end{equation}
where $\tau>0$ and
\begin{equation*}
    K_{\tau}(x) := \cF^{-1}\left[e^{-\tau\sum_{i,j=1}^d b_{ij}\xi_i\xi_j}\chi(\xi)\right](x).
\end{equation*}
Let $m>d/2$ be an integer. By H\"older's inequality and the Plancherel theorem, we have
\begin{align} \label{eq8170031}
    \|K_{\tau}\|_{L_1(\bR^d)} &\leq \left\| (1+|x|^2)^{-m/2} \right\|_{L_2(\bR^d)} \left\| (1+|x|^2)^{m/2}K_{\tau} \right\|_{L_2(\bR^d)} \nonumber
\\
    &\leq N\sum_{|\beta|\leq m}\left\|x^{\beta} K_{\tau}\right\|_{L_2(\bR^d)} \nonumber
\\
    &\leq N\sum_{|\beta|\leq m}
    \left\|D_\xi^\beta\left[e^{-\tau\sum_{i,j=1}^d b_{ij}\xi_i\xi_j}\chi(\xi)\right]\right\|_{L_2(\bR^d)} \nonumber
    \\
    &\leq N (1+\tau)^me^{-c\nu\tau} \sum_{|\beta|\leq m}\|D_\xi^\beta\chi\|_{L_2(\bR^d)} \leq Ne^{-c_0\tau}.
\end{align}
Here, for the last inequality, we used $\sum_{i,j=1}^d b_{ij}\xi_i\xi_j\geq \nu/16$ on $\text{supp}(\chi)$. 
Thus, \eqref{eq8170023} is obtained. The lemma is proved.
\end{proof}

\begin{lemma}
Let $k\geq1$ and assume that $X_k$ is equipped with the norm defined in \eqref{knorm}. Then for every $B\in\bA_{\nu,K}$, $r>0$, and $h\in X_k$,
\begin{equation}
\label{eq8170133}
\|e^{rL_B}h\|_k \leq e^{-2^{2k}\kappa r}\|h\|_k.
\end{equation}
\end{lemma}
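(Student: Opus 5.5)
The plan is to unfold the definition \eqref{knorm} and use the commutation and semigroup properties of the heat semigroups $e^{sL_{B'}}$ for constant matrices. First, $e^{rL_B}h\in X_k$ whenever $h\in X_k$. Indeed, $\cF[e^{rL_B}h]=e^{-r\sum b_{ij}\xi_i\xi_j}\cF[h]$. The multiplier is a smooth function of polynomial growth (even Schwartz), so the support of the Fourier transform is still contained in $\cA_k$. Moreover, $\|e^{rL_B}h\|_{L_\infty}\le\|h\|_{L_\infty}$, since $p_B(r,\cdot)$ is a probability density. Hence $\|e^{rL_B}h\|_k$ is meaningful.

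Next, fix $B'\in\bA_{\nu,K}$ and $s>0$. All Fourier multipliers involved commute, so
\begin{equation*}
e^{sL_{B'}}e^{rL_B}h=\cF^{-1}\bigl[e^{-\xi\cdot(sB'+rB)\xi}\cF[h]\bigr]=e^{(s+r)L_{C}}h,\qquad C:=\frac{sB'+rB}{s+r}.
\end{equation*}
The key observation is that $C$ is a convex combination of $B'$ and $B$. It is symmetric with $\nu I\le C\le KI$, because $\bA_{\nu,K}$ is convex, so $C\in\bA_{\nu,K}$. For $h\in L_\infty$ the identity can be justified at the level of kernels: $p_{B'}(s,\cdot)*p_B(r,\cdot)=p_C(s+r,\cdot)$, since Gaussians convolve by adding their quadratic forms in Fourier space. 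This avoids any distributional subtlety.

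It follows that
\begin{equation*}
e^{2^{2k}\kappa s}\|e^{sL_{B'}}e^{rL_B}h\|_{L_\infty}=e^{-2^{2k}\kappa r}\,e^{2^{2k}\kappa(s+r)}\|e^{(s+r)L_C}h\|_{L_\infty}\le e^{-2^{2k}\kappa r}\|h\|_k.
\end{equation*}
Taking the supremum over $B'\in\bA_{\nu,K}$ and $s>0$ gives \eqref{eq8170133}. The finiteness of $\|h\|_k$ is guaranteed by Proposition \ref{prop_knorm}, or directly by Lemma \ref{lem8161941} since $\kappa<c_0$.

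The only step needing any care is the semigroup-composition identity with the mixed matrix $C$, in particular checking that $C$ stays in the admissible class. Convexity of $\bA_{\nu,K}$ handles this, so I expect no real obstacle.
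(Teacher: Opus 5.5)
Your proposal is correct and follows essentially the same argument as the paper: both compose the two heat semigroups into $e^{(r+s)L_C}$ with the convex combination $C\in\bA_{\nu,K}$, then bound the resulting supremum over $\tau=r+s>r$ by the full supremum defining $\|h\|_k$. Your extra checks, that $e^{rL_B}h\in X_k$ and that the composition identity holds at the level of Gaussian kernels, are details the paper leaves implicit.
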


\begin{proof}
For $A,B\in\bA_{\nu,K}$ and $r,s>0$,
\begin{equation*}
e^{rL_B}e^{sL_A}=e^{(r+s)L_C}, \qquad C:=\frac{rB+sA}{r+s}.
\end{equation*}
Here, we see that $C\in\bA_{\nu,K}$, which yields
\begin{align*}
\|e^{sL_A}h\|_k &= \sup_{\substack{B\in\bA_{\nu,K} \\ r>0}}e^{2^{2k} \kappa r} \|e^{rL_B}e^{sL_A}h\|_{L_\infty(\bR^d)} \\
&\leq e^{-2^{2k}\kappa s}\sup_{\substack{C\in\bA_{\nu,K} \\ \tau>s}}e^{2^{2k}\kappa\tau} \|e^{\tau L_C}h\|_{L_\infty(\bR^d)} \leq e^{-2^{2k}\kappa s} \|h\|_k.
\end{align*}
The lemma is proved.
\end{proof}

\begin{lemma} \label{lem8170202}
Let $B\in\bA_{\nu,K}$, $k\geq1$, and $h \in X_k$. If $l_h$ satisfies \eqref{eq8170201} with $X_k$ in place of $X$, then
\begin{equation} \label{eq8170202}
    l_h(L_Bh)\leq-2^{2k}\kappa\|h\|_k.
\end{equation}
\end{lemma}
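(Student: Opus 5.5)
The idea is to differentiate the semigroup contraction \eqref{eq8170133} at $r=0$ and pair the result with the norming functional $l_h$. Fix $B\in\bA_{\nu,K}$, $k\geq1$, $h\in X_k$, and let $l_h\in X_k^*$ satisfy $\|l_h\|_{X_k^*}=1$ and $l_h(h)=\|h\|_k$. If $h=0$ there is nothing to prove, so assume $h\neq0$.

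First I check that all the objects involved stay in $X_k$. Since $h\in L_\infty$ has Fourier support in the compact annulus $\cA_k$, the function $h$ is smooth with bounded derivatives (Paley--Wiener, or $h=\cF^{-1}[\chi_k]*h$ as in Lemma \ref{lem8161941}). Hence $L_Bh\in L_\infty$ with $\text{supp}(\cF[L_Bh])\subset\cA_k$, so $L_Bh\in X_k$. Likewise $e^{rL_B}h\in X_k$, because multiplication by $e^{-r\xi\cdot B\xi}$ does not enlarge the support.

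Second, I show that
\[
\frac{e^{rL_B}h-h}{r}\to L_Bh \quad\text{in } L_\infty(\bR^d)\ \text{as } r\downarrow0.
\]
Writing $h=\cF^{-1}[\chi_k]*h$, the difference quotient minus $L_Bh$ equals $m_r*h$, where
\[
\cF[m_r](\xi)=\chi_k(\xi)\Big(\frac{e^{-r\xi\cdot B\xi}-1}{r}+\xi\cdot B\xi\Big).
\]
This symbol is smooth and supported in $\{2^{k-2}\leq|\xi|\leq2^{k+2}\}$, and it and its derivatives are $O(r)$ uniformly there (with constants depending on $k$, which is harmless since $k$ is fixed). The same $L_1$ bound via Plancherel as in \eqref{eq8170031} gives $\|m_r\|_{L_1}=O(r)$, so the convergence holds in $L_\infty$. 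Since $\|\cdot\|_k$ is equivalent to $\|\cdot\|_{L_\infty}$ on $X_k$ by \eqref{eq8170137}, the convergence also holds in $(X_k,\|\cdot\|_k)$. This step is the main technical point, but it is routine.

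Third, I combine the pieces. By \eqref{eq8170133} and $\|l_h\|=1$,
\[
l_h(e^{rL_B}h)\leq\|e^{rL_B}h\|_k\leq e^{-2^{2k}\kappa r}\|h\|_k,
\]
so
\[
l_h\Big(\frac{e^{rL_B}h-h}{r}\Big)\leq\frac{e^{-2^{2k}\kappa r}-1}{r}\,\|h\|_k.
\]
Letting $r\downarrow0$ and using the continuity of $l_h$ on $X_k$ together with the second step yields
\[
l_h(L_Bh)\leq-2^{2k}\kappa\|h\|_k,
\]
which is \eqref{eq8170202}.
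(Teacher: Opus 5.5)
Your proposal is correct and matches the paper's proof: pair the contraction \eqref{eq8170133} with the norming functional, divide by $r$, and pass to the limit using the $L_\infty$ convergence of the difference quotient, obtained from the symbol bound as in \eqref{eq8170031} and transferred to $\|\cdot\|_k$ via \eqref{eq8170137}. Your explicit check that $e^{rL_B}h$ and $L_Bh$ lie in $X_k$ is a detail the paper leaves implicit.
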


\begin{proof}
Note that the case $h=0$ is obvious. Thus, we may assume that $h\neq 0$.
By \eqref{eq8170133}, for every $s>0$,
\begin{equation*}
    l_h(e^{sL_B}h)\leq|l_h(e^{sL_B}h)|\leq\|e^{sL_B}h\|_k\leq e^{-2^{2k}\kappa s}\|h\|_k.
\end{equation*}
Since $l_h$ is linear and $l_h(h)=\|h\|_k$,
\begin{equation} \label{eq8170203}
    l_h\left(\frac{e^{sL_B}h-h}{s}\right)\leq\frac{e^{-2^{2k}\kappa s}-1}{s}\|h\|_k.
\end{equation}
We claim that
\begin{equation}\label{eq8170204}
    \frac{e^{sL_B}h-h}{s}\to L_Bh \quad\text{in }(X_k,\|\cdot\|_k)
\end{equation}
as $s\to0$. Let $\chi_k\in C_c^\infty(\bR^d)$ such that $\chi_k=1$ on $\cA_k$. Since $\cF[h] = \chi_k\cF[h]$,
\begin{equation*}
    \frac{e^{sL_B}h-h}{s}-L_Bh=\cF^{-1}[m_s]*h,
\end{equation*}
where
\begin{equation*}
    m_s(\xi):=\left(\frac{e^{-s\sum_{i,j=1}^d b_{ij}\xi_i\xi_j}-1}{s} + \sum_{i,j=1}^d b_{ij}\xi_i\xi_j \right)\chi_k(\xi).
\end{equation*}
For every multi-index $\beta$, $D_\xi^\beta m_s$ converges uniformly to zero on $\text{supp}(\chi_k)$ as $s\to0$. Thus, as in \eqref{eq8170031},
\begin{equation*}
    \|\cF^{-1}[m_s]\|_{L_1(\bR^d)} \to 0 \quad \text{ as } s\to 0,
\end{equation*}
which yields
\begin{equation*}
    \left\|\frac{e^{sL_B}h-h}{s}-L_Bh\right\|_{L_\infty(\bR^d)} \to 0 \quad \text{ as } s\to0.
\end{equation*}
This together with \eqref{eq8170137} proves \eqref{eq8170204}.
Thus, letting $s\to0$ in \eqref{eq8170203}, we obtain \eqref{eq8170202}.
The lemma is proved.
\end{proof}

\subsection{Proofs of Proposition \ref{prop_knorm} and Theorem \ref{thm_frequency}} \label{sec_proof_frequency}

We first present the proof of Proposition \ref{prop_knorm}.

\begin{proof}[Proof of Proposition \ref{prop_knorm}]
Let $c_0$ be the constant in Lemma \ref{lem8161941}, and fix $\kappa\in(0,c_0)$ and $k\geq1$. Recall from Section \ref{sec_statement} that $X_k$ is a closed real linear subspace of $L_\infty(\bR^d)$.
We first establish \eqref{eq8170137}.
For the second inequality in \eqref{eq8170137}, by Lemma \ref{lem8161941},
\begin{equation*}
e^{2^{2k}\kappa r} \|e^{rL_B}h\|_{L_\infty(\bR^d)} \leq N e^{-(c_0-\kappa)2^{2k}r} \|h\|_{L_\infty(\bR^d)}\leq N \|h\|_{L_\infty(\bR^d)}.
\end{equation*}
To prove the first inequality, let us fix $B_0:=\nu I\in\bA_{\nu,K}$. We claim that
\begin{equation}
\label{eq8170138}
\lim_{r\to0} \|e^{rL_{B_0}}h-h\|_{L_\infty(\bR^d)}=0.
\end{equation}
Since $\text{supp}(\cF[h])\subset \cA_k$,
\begin{equation*}
e^{rL_{B_0}}h-h=\cF^{-1}\left[\left(e^{-r\nu|\xi|^2}-1\right) \chi_k(\xi)\right]*h,
\end{equation*}
where $\chi_k\in C_c^\infty(\bR^d)$ such that $\chi_k=1$ on $\cA_k$.
As in \eqref{eq8170031}, for any integer $m>d/2$, one can show that
\begin{align*}
&\left\| \cF^{-1}\left[\left(e^{-r\nu|\xi|^2}-1\right) \chi_k(\xi)\right](\cdot) \right\|_{L_1(\bR^d)} \\
&\leq N \sum_{|\beta|\leq m} \left\|D_\xi^\beta\left[(e^{-r\nu |\xi|^2}-1)\chi_k(\xi)\right]\right\|_{L_2(\bR^d)}.
\end{align*}
By the dominated convergence theorem, we get \eqref{eq8170138}. Moreover, we have
\begin{equation*}
\|h\|_{L_\infty(\bR^d)} = \lim_{r\to0}\| e^{rL_{B_0}}h \|_{L_\infty(\bR^d)} \leq \|h\|_k.
\end{equation*}

Now we show that $(X_k,\|\cdot\|_k)$ is a Banach space.
Let $h_n$ be a Cauchy sequence in $(X_k,\|\cdot\|_k)$. By the first inequality in \eqref{eq8170137}, $h_n$ is Cauchy in $L_\infty(\bR^d)$. Since $X_k$ is closed in $L_\infty(\bR^d)$, there exists $h\in X_k$ such that $h_n\to h$ in $L_\infty(\bR^d)$. By the second inequality in \eqref{eq8170137},
\begin{equation*}
\|h_n-h\|_k \leq N\|h_n-h\|_{L_\infty(\bR^d)} \to 0.
\end{equation*}
Thus, $(X_k,\|\cdot\|_k)$ is a Banach space.
The proposition is proved.
\end{proof}

\begin{proof}[Proof of Theorem \ref{thm_frequency}]
Let
\begin{equation*}
w_k(t,x):=\Delta_k u(t,x), \qquad g_k(t,x):=\Delta_k f(t,x).
\end{equation*}
By \eqref{eq8170137} and the boundedness of the Littlewood-Paley projections, one can easily show that $w_k\in C^1([0,T];X_k)$ and $g_k\in L_\infty((0,T);X_k)$.
Since $a_{ij}$ are independent of $x$, we have
\begin{equation*}
\partial_t^\alpha w_k(t,\cdot) = \sum_{i,j=1}^d a_{ij}(t) D_{ij}w_k(t,\cdot)  +g_k(t,\cdot), \qquad w_k(0,\cdot) = 0,
\end{equation*}
in $X_k$ for almost every $t\in(0,T)$.

We first consider $k\geq1$. Let
\begin{equation*}
v_k(t):=\|w_k(t,\cdot)\|_k.
\end{equation*}
Since $w_k\in C^1([0,T];X_k)$, $v_k$ is Lipschitz continuous on $[0,T]$.
For $t\in(0,T)$ such that $v_k(t)>0$, choose $l_t\in X_k^*$ as in Lemma \ref{lem8170201}. Then we have
\begin{equation*}
\|l_t\|_{X_k^*}=1, \qquad l_t(w_k(t,\cdot))=v_k(t).
\end{equation*}
By Lemmas \ref{lem8170203} and \ref{lem8170202},
\begin{align*}
\partial_t^\alpha v_k(t) &\leq l_t(\partial_t^\alpha w_k(t,\cdot))
\\
&=l_t(L_{A(t)}w_k(t,\cdot))+l_t(g_k(t,\cdot))
\\
&\leq -2^{2k}\kappa v_k(t)+\|g_k(t,\cdot)\|_k,
\end{align*}
where $A(t)=(a_{ij}(t))$.
If $v_k(t)=0$, then the same inequality follows from Lemma \ref{lem8170203} with the zero functional. Hence, \eqref{eq_frequency} is proved.

Next, we consider $k=0$.
Using \eqref{eq8162343}, one can deduce that
\begin{equation} \label{eq8181044}
\|e^{rL_{A(t)}}h\|_{L_\infty(\bR^d)} \leq \|p_{A(t)}(r,\cdot)*h\|_{L_\infty(\bR^d)} \leq \|h\|_{L_\infty(\bR^d)}, \qquad h\in X_0.
\end{equation}
We claim that if $0\neq h\in X_0$ and $l_h\in X_0^*$ satisfies
\begin{equation*}
\|l_h\|_{X_0^*}=1, \qquad l_h(h) = \|h\|_{L_\infty(\bR^d)},
\end{equation*}
then
\begin{equation} \label{eq8181057}
l_h(L_{A(t)}h) \leq 0.
\end{equation}
Since $l_h$ is linear, \eqref{eq8181044} yields
\begin{equation*}
l_h\left(\frac{e^{rL_{A(t)}}h-h}{r}\right)\leq0, \qquad r>0.
\end{equation*}
Let $\chi_0\in C_c^\infty(\bR^d)$ such that $\chi_0=1$ on $\cA_0$. Since $\cF[h]=\chi_0\cF[h]$, we have
\begin{equation*}
\frac{e^{rL_{A(t)}}h-h}{r}-L_{A(t)}h=\cF^{-1}[m_r]*h,
\end{equation*}
where
\begin{equation*}
m_r(\xi):=\left(\frac{e^{-r\sum_{i,j=1}^da_{ij}(t)\xi_i\xi_j}-1}{r}+\sum_{i,j=1}^da_{ij}(t)\xi_i\xi_j\right)\chi_0(\xi).
\end{equation*}
For any multi-index $\beta$, $D_\xi^\beta m_r$ converges uniformly to zero as $r\to0$. Thus, as in \eqref{eq8170031},
\begin{equation*}
\frac{e^{rL_{A(t)}}h-h}{r}\to L_{A(t)}h \quad \text{ in } X_0,
\end{equation*}
which yields \eqref{eq8181057}.

Let
\begin{equation*}
v_0(t) := \|w_0(t,\cdot)\|_{L_\infty(\bR^d)}.
\end{equation*}
For a fixed $t\in(0,T]$ with $v_0(t)>0$, take a norming functional $l_t\in X_0^*$ for $w_0(t,\cdot)$. If $v_0(t)=0$, take $l_t=0$. Since
\begin{equation*}
\partial_t^\alpha w_0=L_{A(t)}w_0+g_0,
\end{equation*}
by Lemma \ref{lem8170203} and \eqref{eq8181057},
\begin{equation*}
\partial_t^\alpha v_0(t) \leq l_t(L_{A(t)}w_0(t,\cdot)) + l_t(g_0(t,\cdot))\leq\|g_0(t,\cdot)\|_{L_\infty(\bR^d)}.
\end{equation*}
This proves \eqref{eq_frequency_low}.
The proof is complete.
\end{proof}

\begin{example} \label{ex_variable_coefficients}
The frequency localized maximum principle fails even for analytic coefficients depending on the spatial variables.
For $x=(x_1,\cdots, x_d)\in\bR^d$, let us consider the operator
\begin{equation*}
    L(x): u\mapsto  (1+\varepsilon\cos(3x_1))\Delta u,
\end{equation*}
where $\varepsilon\in(0,1)$. 
Then one can verify that $L$ satisfies Assumption \ref{asscoeff}.
For $m=1,2$, we define $h_m(x) := \cos(mx_1)$. In $\cS'(\bR^d)$,
\begin{equation*}
    \cF[h_m](\xi) = c_d\left( \delta_{me_1}(\xi) + \delta_{-me_1}(\xi) \right),
\end{equation*}
where $e_1$ is the first coordinate vector and $\delta_\eta$ denotes the Dirac measure at $\eta \in \bR^d$.
Then by the definition of the Littlewood-Paley projection,
\begin{eqnarray} \label{eq10091118}
    &\Delta_1h_1=0, \quad \Delta_1(Lh_1)= -\frac{\varepsilon}{2}h_2,& \nonumber
    \\
    &\Delta_1 h_2=h_2, \quad \Delta_1(Lh_2) = -4h_2.&
\end{eqnarray}

We now construct a function $u$ for which \eqref{eq_frequency} fails when $k=1$.
Let
\begin{equation*}
    u(t,x) := b(t)h_1(x) -\frac{\varepsilon}{2}t^2 h_2(x),
\end{equation*}
where $b(t) := \frac{2t^{2-\alpha}}{\Gamma(3-\alpha)} + 4t^2$. Then by using \eqref{eq10091118}, one can compute
\begin{equation*}
    \Delta_1 u (t,\cdot) = -\frac{\varepsilon}{2}t^2 h_2, \qquad \Delta_1 f(t,\cdot) = \frac{\varepsilon}{2} \left( b(t)-\partial_t^\alpha(t^2) - 4t^2 \right) h_2 =0,
\end{equation*}
where $f:=\partial_t^\alpha u - Lu$. Thus,
\begin{equation*}
    \partial_t^\alpha \|\Delta_1 u(t,\cdot)\|_{1} = \frac{\varepsilon t^{2-\alpha}}{\Gamma(3-\alpha)} \|h_2\|_{1} >0,
\end{equation*}
whereas $\|\Delta_1 f\|_{1}=0$. Hence, \eqref{eq_frequency} fails for $k=1$.
We also remark that the same example can be applied to the classical heat equation with variable coefficients when $\alpha=1$.
\end{example}

\section{A priori estimate} \label{sec_apriori}

In this section, we prove the a priori estimate \eqref{eq_main_est}. We first treat coefficients that are independent of the spatial variable in Section \ref{sec_simple} and then consider general coefficients in Section \ref{sec_general}.

\subsection{Simple coefficients} \label{sec_simple}

Let
\begin{equation*}
    E_{\alpha,\beta}(z) :=\sum_{n=0}^\infty \frac{z^n}{\Gamma(\alpha n+\beta)}
\end{equation*}
be the two-parameter Mittag-Leffler function.

\begin{lemma} \label{lem_max}
Let $\alpha\in(0,1)$, $T\in(0,\infty)$, $c\in[0,\infty)$, and $v\in C^1([0,T])$. Suppose that $v(0)\leq0$ and
\begin{equation}\label{eq8160138}
    \partial_t^\alpha v(t)+cv(t)\leq0, \quad t\in(0,T].
\end{equation}
Then $v(t)\leq0$ for every $t\in[0,T]$.
\end{lemma}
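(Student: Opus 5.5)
We argue by contradiction, using the pointwise representation \eqref{eqMarch} established in the proof of Lemma \ref{lem8170203}. For $v\in C^1([0,T])$ and $t\in(0,T]$,
\begin{equation*}
\partial_t^\alpha v(t)=\frac{1}{\Gamma(1-\alpha)}\left[\frac{v(t)-v(0)}{t^\alpha}+\alpha\int_0^t\frac{v(t)-v(s)}{(t-s)^{1+\alpha}}ds\right].
\end{equation*}
The integral converges absolutely because $v$ is Lipschitz. The idea is to evaluate this at a point where $v$ attains a positive maximum.

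Suppose that $\sup_{[0,T]}v>0$. Since $v$ is continuous, there is $t_0\in[0,T]$ with $v(t_0)=\max_{[0,T]}v>0$. Because $v(0)\le 0$, necessarily $t_0\in(0,T]$. At this point $v(t_0)-v(s)\ge0$ for all $s\in[0,t_0)$, so the integral term is nonnegative. Moreover $v(t_0)-v(0)\ge v(t_0)>0$, so the first term is strictly positive. Hence
\begin{equation*}
\partial_t^\alpha v(t_0)\ge \frac{v(t_0)}{\Gamma(1-\alpha)t_0^\alpha}>0.
\end{equation*}
Since $c\ge0$ and $v(t_0)>0$, we also have $cv(t_0)\ge0$. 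Therefore $\partial_t^\alpha v(t_0)+cv(t_0)>0$, which contradicts \eqref{eq8160138} at $t=t_0$. We conclude that $v\le 0$ on $[0,T]$.

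The only step needing care is the validity of \eqref{eqMarch} for scalar $C^1$ functions and the fact that it holds at every $t\in(0,T]$, not merely almost everywhere. This comes directly from the Fubini computation in the proof of Lemma \ref{lem8170203}, which uses only the continuity of $v'$. The hypothesis \eqref{eq8160138} is assumed pointwise for all $t\in(0,T]$, so the argument at the single point $t_0$ is legitimate.

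The lower bound by $v(t_0)t_0^{-\alpha}/\Gamma(1-\alpha)$ is what makes the maximum strict enough. No approximation or perturbation argument, such as replacing $v$ by $v-\varepsilon$, is needed.
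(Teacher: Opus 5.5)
Your proof is correct and follows essentially the same route as the paper: take a positive maximum at some $t_0\in(0,T]$ and use the representation \eqref{eqMarch} to get $\partial_t^\alpha v(t_0)>0$, which together with $cv(t_0)\geq0$ contradicts \eqref{eq8160138}. Your added remarks on the pointwise validity of \eqref{eqMarch} for $C^1$ functions and the explicit lower bound $v(t_0)/(\Gamma(1-\alpha)t_0^\alpha)$ are correct details that the paper leaves implicit.
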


\begin{proof}
Assume that $v$ has a positive maximum at some $t_0\in(0,T]$. Then $v(t_0)-v(0)>0$ and $v(t_0)-v(s)\geq0$ for every $s\in[0,t_0]$. 
The relation \eqref{eqMarch} easily yields
\begin{equation*}
    \partial_t^\alpha v(t_0)>0.
\end{equation*}
Since $cv(t_0)\geq0$, \eqref{eq8160138} leads to a contradiction. Thus, $v\leq0$ on $[0,T]$. The lemma is proved.
\end{proof}

\begin{corollary} \label{cor_comparison}
Let $\alpha\in(0,1)$, $T\in(0,\infty)$, $c\in[0,\infty)$, and $h\in L_\infty((0,T))$. Suppose that $v$ is an absolutely continuous function on $[0,T]$ such that $v(0)\leq0$ and
\begin{equation} \label{eq8252324}
    \partial_t^\alpha v(t)+cv(t)\leq h(t),
\end{equation}
for almost every $t\in(0,T)$.
Then
\begin{align} \label{eq8260010}
    v(t) \leq \int_0^t (t-s)^{\alpha-1} E_{\alpha,\alpha}(-c(t-s)^\alpha)h(s) ds,\quad t\in[0,T].
\end{align}
\end{corollary}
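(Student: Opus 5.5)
Set
\begin{equation*}
w(t):=\int_0^t (t-s)^{\alpha-1}E_{\alpha,\alpha}(-c(t-s)^\alpha)h(s)\,ds .
\end{equation*}
The plan is to show that $w$ solves $\partial_t^\alpha w+cw=h$ with $w(0)=0$, and then to apply a comparison principle to $v-w$. Lemma~\ref{lem_max} needs $C^1$ functions and a pointwise inequality for every $t$, so the comparison step must be done at the level of fractional integrals.

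\textbf{Step 1: the integral form of the inequality.} Since $v$ is absolutely continuous, $I_0^{1-\alpha}(v-v(0))=I_0^{2-\alpha}v'$ is absolutely continuous and vanishes at $0$. Its derivative is $\partial_t^\alpha v$, and it lies in $L_1$ because it is bounded above by $h-cv$. Applying $I_0^\alpha$ and using Lemma~\ref{lem10122003} (the scalar versions are standard), I get
\begin{equation*}
v(t)-v(0)=I_0^\alpha(\partial_t^\alpha v)(t)\le I_0^\alpha(h-cv)(t).
\end{equation*}
Positivity of $I_0^\alpha$ preserves the inequality. One detail to check: $\partial_t^\alpha v$ might only be known to be bounded above. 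I would verify it is in $L_1$ from $\partial_t^\alpha v=\frac{d}{dt}I^{2-\alpha}v'$ with $I^{2-\alpha}v'$ absolutely continuous, which holds because $I^{1-\alpha}v'\in L_1$. Since $v(0)\le 0$, this gives the integral inequality
\begin{equation*}
v\le I_0^\alpha h - c\,I_0^\alpha v .
\end{equation*}

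\textbf{Step 2: the identity for $w$.} Using the Laplace transform, or the series of $E_{\alpha,\alpha}$ together with termwise use of the semigroup identity~\eqref{eq8241804}, I would check that
\begin{equation*}
w=I_0^\alpha h-c\,I_0^\alpha w .
\end{equation*}
Writing $e(t)=t^{\alpha-1}E_{\alpha,\alpha}(-ct^\alpha)=\sum_n (-c)^n t^{\alpha n+\alpha-1}/\Gamma(\alpha n+\alpha)$, we have $e*h=\sum_n(-c)^n I^{\alpha(n+1)}h$. Since $|I^{\beta}h|\le \|h\|_\infty t^\beta/\Gamma(\beta+1)$, the series converges uniformly on $[0,T]$, and the identity follows by reindexing. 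Also recall that $E_{\alpha,\alpha}(-x)\ge0$ for $x\ge0$ (complete monotonicity, $0<\alpha<1$), so $e\ge0$.

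\textbf{Step 3: comparison (the main obstacle).} Let $z:=v-w$. Then $z\le -c\,I_0^\alpha z$, that is, $z+g=-cI_0^\alpha z$ for some $g\ge0$. I want to conclude $z\le0$. Treating this as a Volterra equation, one has
\begin{equation*}
z=-\Big(\sum_n(-c)^nI^{\alpha n}\Big)g = -\big(g-c\,e*g\big).
\end{equation*}
In resolvent form this reads $z=-g+c\,e*g$, which is not obviously nonpositive. Instead I would use the resolvent identity for the kernel $\rho=\frac{d}{dt}\big[1-E_\alpha(-ct^\alpha)\big]=c\,e\ge0$, for which the solution operator of $z=-g-cI^\alpha z$ is $g\mapsto -\frac{d}{dt}\big(E_\alpha(-ct^\alpha)*g\big)$. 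Its positivity follows from $E_\alpha(-ct^\alpha)$ being completely monotone, and hence a positive-type resolvent. If this becomes messy, the fallback is a Gronwall-type iteration on $[0,T]$: from $z^+\le c\,I^\alpha z^-$ and the analogous bound for $z^-$, show $z^+=0$ by a maximum argument adapted from Lemma~\ref{lem_max} (the maximum of $z$ at a first positive point). Alternatively, mollify in $t$ to reduce to the $C^1$ setting of Lemma~\ref{lem_max}, handling the a.e.\ inequality by approximating $h$ from above by continuous functions.
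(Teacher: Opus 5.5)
There is a genuine gap. The integral inequality you extract in Step~1, $v\le I_0^\alpha h-cI_0^\alpha v$, is strictly weaker than the hypothesis and does not imply the conclusion, so Step~3 cannot succeed as planned. Here is a counterexample. Take $h=0$ (so $w\equiv0$), $c>0$, and a smooth $g\ge0$ with $g>0$ on $(0,\eta)$ and $g=0$ on $[\eta,T]$. By your own Neumann-series computation, $z:=-(g-c\,e*g)$ satisfies $z+cI_0^\alpha z=-g\le0$. Moreover, $z$ is absolutely continuous with $z(0)=-g(0)\le0$. So $v:=z$ has every property you retain after Step~1, yet $z(t)=c\,(e*g)(t)>0$ for $t>\eta$.

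In particular, the positivity claimed in Step~3 is false. Since $\frac{d}{dt}E_{\alpha,1}(-ct^\alpha)=-c\,e(t)$, one has $\frac{d}{dt}\int_0^tE_{\alpha,1}(-c(t-s)^\alpha)g(s)\,ds=g(t)-c\,(e*g)(t)$. This is the same operator $g\mapsto g-c\,e*g$ you found first, and it is not positivity preserving. Complete monotonicity of $E_{\alpha,1}(-ct^\alpha)$ gives exactly the wrong sign. The fallback via $z^+\le cI_0^\alpha z^-$ fails for the same reason. The maximum argument of Lemma~\ref{lem_max} cannot be run directly either, since $v$ is only absolutely continuous and the inequality holds only almost everywhere.

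The missing idea is to keep the structure of the defect. Let $\mu:=h-cv-\partial_t^\alpha v$. It is nonnegative a.e.\ and integrable, because $\partial_t^\alpha v=I_0^{1-\alpha}v'$. Then $v+cI_0^\alpha v=v(0)+I_0^\alpha(h-\mu)$ exactly. Invert $1+cI_0^\alpha$ on $L_1(0,T)$ by its Neumann series, using $\sum_n(-c)^nI_0^{\alpha n}1=E_{\alpha,1}(-ct^\alpha)$ and $\sum_n(-c)^nI_0^{\alpha(n+1)}f=e*f$. This gives
\begin{equation*}
v=v(0)E_{\alpha,1}(-ct^\alpha)+w-e*\mu\le w
\end{equation*}
almost everywhere, hence everywhere by continuity, since $v(0)\le0$ and $E_{\alpha,1}(-ct^\alpha),\,e\ge0$. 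Positivity is only needed on functions of the form $a+I_0^\alpha\mu$ with $a,\mu\ge0$, where it does hold. That would be a complete, self-contained argument, different from the paper's.

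The paper instead mollifies the zero extensions of $v$ and $h$. It uses the identity $\partial_t^\alpha v^{(\varepsilon)}=(\partial_t^\alpha v)^{(\varepsilon)}+v(0)\,\varphi_\varepsilon*k_{1-\alpha}$, with $k_{1-\alpha}(t)=t^{-\alpha}/\Gamma(1-\alpha)$; here $v(0)\le0$ gives the favorable sign. This yields the inequality pointwise for a smooth function. The paper then compares $v^{(\varepsilon)}$ with the $C^1$ solution $w^\varepsilon=e*h^{(\varepsilon)}$ via Lemma~\ref{lem_max} and lets $\varepsilon\to0$. Your last fallback points in this direction, but regularizing $h$ alone is not enough. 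The function $v$ itself must be mollified, and that is precisely where the sign condition on $v(0)$ enters.
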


\begin{proof}
Let $\varepsilon>0$, and let $\varphi_\varepsilon$ be the mollifier introduced in Lemma \ref{lem_dense}.
Let us extend $v$ and $h$ by zero to $(-\infty,0)$. Then
\begin{align*}
\partial_t^\alpha v^{(\varepsilon)} &= \frac{d}{dt} \left( k_{1-\alpha}*(v^{(\varepsilon)}-v^{(\varepsilon)}(0)) \right)
\\
&=(\partial_t^\alpha v)^{(\varepsilon)} + v(0)(\varphi_\varepsilon*k_{1-\alpha}),
\end{align*}
where $k_{1-\alpha}(t):= \frac{t^{-\alpha}}{\Gamma(1-\alpha)}1_{(0,\infty)}(t)$.
This together with \eqref{eq8252324} leads to
\begin{equation*}
    \partial_t^\alpha v^{(\varepsilon)}(t)+cv^{(\varepsilon)}(t)\leq h^{(\varepsilon)}(t),
\end{equation*}
where $v^{(\varepsilon)}$ and $h^{(\varepsilon)}$ are defined as in \eqref{eq8252326}.

By \cite[Theorem 4.1]{KST06},
\begin{equation*}
    w^\varepsilon(t):= \int_0^t (t-s)^{\alpha-1} E_{\alpha,\alpha}(-c(t-s)^\alpha)h^{(\varepsilon)}(s) ds
\end{equation*}
is a solution to
\begin{equation*}
    \partial_t^\alpha w^\varepsilon(t) + cw^\varepsilon(t) = h^{(\varepsilon)}(t)
\end{equation*}
with $w^\varepsilon(0)=0$. 
Note that since $h^{(\varepsilon)}(s)=0$ for $s<0$, by a change of variables,
\begin{align*}
    w^\varepsilon(t) = \int_0^T s^{\alpha-1} E_{\alpha,\alpha}(-cs^\alpha)h^{(\varepsilon)}(t-s) ds.
\end{align*}
Thus, by using
\begin{equation} \label{eq8252345}
    0\leq E_{\alpha,\alpha}(-r)\leq\frac{N}{(1+r)^2}, \qquad r\geq0
\end{equation}
(see e.g. \cite[Section 6]{HMS11}),
\begin{equation*}
    \frac{d}{dt}w^\varepsilon(t) = \int_0^T s^{\alpha-1} E_{\alpha,\alpha}(-cs^\alpha) \frac{d}{dt}h^{(\varepsilon)}(t-s) ds,
\end{equation*}
which yields $w^\varepsilon\in C^1([0,T])$.
Therefore, we can apply Lemma \ref{lem_max} to $v^{(\varepsilon)}-w^\varepsilon$ and obtain that 
\begin{align*}
    v^{(\varepsilon)}(t) \leq \int_0^t (t-s)^{\alpha-1} E_{\alpha,\alpha}(-c(t-s)^\alpha)h^{(\varepsilon)}(s) ds.
\end{align*}
Since $v$ is continuous in $(0,T)$ and $h$ is bounded, letting $\varepsilon\to0$ in this inequality yields \eqref{eq8260010} for $t\in(0,T)$. We note that the case $t=0$ is trivial.
The proof is completed.
\end{proof}

\begin{lemma}
Let $T\in(0,\infty)$, $\alpha\in(0,1)$, $\gamma\in(0,\infty)$, and $\kappa\in(0,c_0)$ be fixed, where $c_0$ is as in Lemma \ref{lem8161941}. 
Suppose that Assumption \ref{asscoeff} holds, $a_{ij}=a_{ij}(t)$ are independent of $x$, and that $b_i=c=0$. Assume that $u \in C^1([0,T];\Lambda^{\gamma+2}(\bR^d))$ is a solution to \eqref{eq_intro} with $u(0,\cdot)=0$.

(i) For every $k\geq1$ and $t\in[0,T]$,
\begin{align} 
    &\|\Delta_ku(t,\cdot)\|_{L_\infty(\bR^d)}\notag\\
    &\leq N\int_0^t(t-s)^{\alpha-1}E_{\alpha,\alpha}\left(-2^{2k}\kappa(t-s)^\alpha\right)\|\Delta_kf(s,\cdot)\|_{L_\infty(\bR^d)}ds,\label{eq8170208}
\end{align}
where $N$ depends only on $d$, $\nu$, $K$, and $\psi$.

(ii) When $k=0$, we have
\begin{equation} \label{eq8261856}
    \|\Delta_0u(t,\cdot)\|_{L_\infty(\bR^d)}\leq \frac{1}{\Gamma(\alpha)}\int_0^t(t-s)^{\alpha-1}\|\Delta_0f(s,\cdot)\|_{L_\infty(\bR^d)}ds.
\end{equation}
\end{lemma}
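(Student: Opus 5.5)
The plan is to combine Theorem~\ref{thm_frequency} with the comparison principle in Corollary~\ref{cor_comparison}, and then pass back to the $L_\infty$ norm using the norm equivalence \eqref{eq8170137}.

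For (i), fix $k\geq1$ and set $v_k(t):=\|\Delta_k u(t,\cdot)\|_k$. By Theorem~\ref{thm_frequency}, $\Delta_k u\in C^1([0,T];X_k)$. Hence $v_k$ is Lipschitz, in particular absolutely continuous, on $[0,T]$, and $v_k(0)=0$ because $u(0,\cdot)=0$. The theorem also gives, for almost every $t$,
\begin{equation*}
\partial_t^\alpha v_k(t)+2^{2k}\kappa v_k(t)\leq h_k(t):=\|\Delta_k f(t,\cdot)\|_k .
\end{equation*}
We need $h_k\in L_\infty((0,T))$. Since $u\in C^1([0,T];\Lambda^{\gamma+2})$, the identity $f=\partial_t^\alpha u-\sum a_{ij}D_{ij}u$ shows $f\in L_\infty((0,T);\Lambda^\gamma)$, because $\partial_t^\alpha u=\Gamma(1-\alpha)^{-1}\int_0^t(t-s)^{-\alpha}u'(s)\,ds$ is bounded. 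Then $h_k\leq N\|\Delta_k f\|_{L_\infty}$ is bounded, and it is measurable as the norm of a strongly measurable $X_k$-valued function; Theorem~\ref{thm_frequency} supplies $\Delta_k f\in L_\infty((0,T);X_k)$.

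Applying Corollary~\ref{cor_comparison} with $c=2^{2k}\kappa$ gives
\begin{equation*}
v_k(t)\leq\int_0^t(t-s)^{\alpha-1}E_{\alpha,\alpha}(-2^{2k}\kappa(t-s)^\alpha)h_k(s)\,ds .
\end{equation*}
By \eqref{eq8252345}, $E_{\alpha,\alpha}(-r)\geq0$, so the kernel is nonnegative and we may bound $h_k(s)\leq N\|\Delta_k f(s,\cdot)\|_{L_\infty}$ inside the integral, again by \eqref{eq8170137}. Finally $\|\Delta_k u(t,\cdot)\|_{L_\infty}\leq v_k(t)$ by the lower bound in \eqref{eq8170137}, which yields \eqref{eq8170208}. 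The constant $N$ comes only from \eqref{eq8170137}. It depends on $\kappa$ as well as on $d,\nu,K,\psi$, but this is harmless because $\kappa$ is fixed.

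For (ii), the same argument uses \eqref{eq_frequency_low} with $c=0$ and the plain $L_\infty$ norm on $X_0$. Since $E_{\alpha,\alpha}(0)=1/\Gamma(\alpha)$, Corollary~\ref{cor_comparison} gives exactly \eqref{eq8261856} with no extra constant.

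The only step that needs care is checking the hypotheses of Corollary~\ref{cor_comparison}: absolute continuity of $v_k$, the zero initial value, and essential boundedness and measurability of $h_k$. These all follow from the regularity $u\in C^1([0,T];\Lambda^{\gamma+2})$ and from the conclusions of Theorem~\ref{thm_frequency}, as described above.
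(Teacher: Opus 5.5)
Your proposal is correct and matches the paper's proof: apply Theorem~\ref{thm_frequency}, then Corollary~\ref{cor_comparison} with $c=2^{2k}\kappa$ (respectively $c=0$, using $E_{\alpha,\alpha}(0)=1/\Gamma(\alpha)$), and pass back to $L_\infty$ via \eqref{eq8170137} for both $\Delta_k u$ and $\Delta_k f$. Your extra checks of the corollary's hypotheses are sound, and your remark on the $\kappa$-dependence of $N$ agrees with the paper's later comment that it disappears once $\kappa=c_0/2$ is fixed.
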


\begin{proof}
For $k\in\bN_0$, denote
\begin{equation*}
v_k(t) := \|\Delta_k u(t,\cdot)\|_k.
\end{equation*}

$(i)$ For $k\geq1$, by \eqref{eq_frequency} and Corollary \ref{cor_comparison} with $c=2^{2k}\kappa$,
\begin{equation*}
v_k(t)\leq\int_0^t(t-s)^{\alpha-1}E_{\alpha,\alpha}\left(-2^{2k}\kappa(t-s)^\alpha\right)\|\Delta_k f(s,\cdot)\|_k ds.
\end{equation*}
Thus, \eqref{eq8170208} follows from \eqref{eq8170137} for both $\Delta_k u$ and $\Delta_k f$.

$(ii)$ For $k=0$, by \eqref{eq_frequency_low} and Corollary \ref{cor_comparison} with $c=0$ and $E_{\alpha,\alpha}(0)=1/\Gamma(\alpha)$,
\begin{equation*}
\|\Delta_0 u(t,\cdot)\|_{L_\infty(\bR^d)}\leq\frac{1}{\Gamma(\alpha)}\int_0^t(t-s)^{\alpha-1}\|\Delta_0 f(s,\cdot)\|_{L_\infty(\bR^d)} ds,
\end{equation*}
which yields \eqref{eq8261856}.
The lemma is proved.
\end{proof}

\begin{lemma}
Let $T\in(0,\infty)$, $\alpha\in(0,1)$, $\gamma\in(0,\infty)$, and $p\in(1,\infty]$. Let $\omega\in A_p(\bR)$ with $[\omega]_{A_p(\bR)}\leq K_0$ (put $\omega\equiv1$ and $K_0=1$ when $p=\infty$).
Suppose that Assumption \ref{asscoeff} holds, $a_{ij}=a_{ij}(t)$ are independent of $x$, and that $b_i=c=0$. Assume that $u\in \bH_{p,\omega,0}^{\alpha,\gamma+2}(T)$ satisfies \eqref{eq_intro}. Then we have
\begin{equation} \label{eq8171625}
    \|u(t,\cdot)\|_{\Lambda^{\gamma+2}(\bR^d)} \leq N(T) \bM_t\left(\|f(\star,\cdot)\|_{\Lambda^{\gamma}(\bR^d)} 1_{(0,T)}(\star)\right)(t),
\end{equation}
for almost every $t\in(0,T)$, where $N(T):=N(1+T^\alpha)$ and $N$ depends only on $d$, $\alpha$, $\gamma$, $\nu$, $K$, and $\psi$. In particular, $N(T)$ is increasing in $T$.
\end{lemma}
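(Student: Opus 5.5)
We first reduce to smooth-in-time solutions, then estimate each dyadic block with the preceding lemma and sum. Given $u\in\bH_{p,\omega,0}^{\alpha,\gamma+2}(T)$, we take the time mollification $u^{(\varepsilon)}$ of Lemma \ref{lem_dense}. It is smooth in $t$ with values in $\Lambda^{\gamma+2}(\bR^d)$ and vanishes near $t=0$, so $u^{(\varepsilon)}\in C^1([0,T];\Lambda^{\gamma+2}(\bR^d))$ with $u^{(\varepsilon)}(0,\cdot)=0$. Since the coefficients depend on $t$, mollification does not commute with the operator exactly. It is better to work with $f:=\partial_t^\alpha u-L_{A(t)}u$ and set $f_\varepsilon:=\partial_t^\alpha u^{(\varepsilon)}-L_{A(t)}u^{(\varepsilon)}$. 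By \eqref{eq8251609}, $f_\varepsilon=(\partial^\alpha_t u)^{(\varepsilon)}-L_{A(t)}u^{(\varepsilon)}$, which lies in $L_\infty((0,T);\Lambda^\gamma)$ because $u^{(\varepsilon)}$ is bounded in $\Lambda^{\gamma+2}$ for fixed $\varepsilon$.

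Alternatively, and more cleanly, we avoid mollifying the equation and instead establish the dyadic bound \eqref{eq8170208} directly for $u\in\bH$. For fixed $k$, $w_k=\Delta_k u$ satisfies $\partial_t^\alpha w_k=L_{A(t)}w_k+\Delta_k f$ in $X_k$. Mollifying in time gives $\partial_t^\alpha w_k^{(\varepsilon)}=(L_{A}w_k)^{(\varepsilon)}+(\Delta_k f)^{(\varepsilon)}$. The term $(L_A w_k)^{(\varepsilon)}-L_A w_k^{(\varepsilon)}$ is bounded in $X_k$ by $N2^{2k}$ times a mollification-type difference. As $\varepsilon\to0$ it tends to $0$ in $L_1$ in time, and also in $L_\infty$ in time after using the maximal-function bound (this works because $w_k$ takes values in the finite-frequency space $X_k$). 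We then apply Theorem \ref{thm_frequency}, or rather its proof through Lemmas \ref{lem8170203} and \ref{lem8170202}, to $w_k^{(\varepsilon)}$ with right-hand side $g_\varepsilon:=\partial_t^\alpha w_k^{(\varepsilon)}-L_Aw_k^{(\varepsilon)}$. Corollary \ref{cor_comparison} with $c=2^{2k}\kappa$ and the limit $\varepsilon\to0$ then give
\begin{equation*}
\|\Delta_ku(t)\|_{L_\infty}\le N\int_0^t (t-s)^{\alpha-1}E_{\alpha,\alpha}(-2^{2k}\kappa(t-s)^\alpha)\|\Delta_k f(s)\|_{L_\infty}ds
\end{equation*}
for a.e.\ $t$. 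For $k=0$ we obtain the analogous bound with kernel $(t-s)^{\alpha-1}/\Gamma(\alpha)$. This approximation step is the main technical obstacle, since the coefficients are only measurable in time and the convergence of the commutator term has to be controlled.

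Next we bound the kernel by the maximal function. From \eqref{eq8252345}, with $c=2^{2k}\kappa$, we have $s^{\alpha-1}E_{\alpha,\alpha}(-cs^\alpha)\le N s^{\alpha-1}(1+cs^\alpha)^{-2}$. This kernel is radially decreasing in $s$, so the standard majorization gives
\begin{equation*}
\int_0^t(\cdot)\,\|\Delta_kf(s)\|_{L_\infty}ds\le \|s^{\alpha-1}(1+cs^\alpha)^{-2}\|_{L_1(0,\infty)}\,\bM_t F(t),
\end{equation*}
where $F(s)=\|\Delta_k f(s)\|_{L_\infty}1_{(0,T)}(s)$. A substitution gives $\|s^{\alpha-1}(1+cs^\alpha)^{-2}\|_{L_1}=N(\alpha)c^{-1}=N2^{-2k}$. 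Since $\|\Delta_kf(s)\|_{L_\infty}\le 2^{-k\gamma}\|f(s)\|_{\Lambda^\gamma}$, we get $2^{k(\gamma+2)}\|\Delta_k u(t)\|_{L_\infty}\le N\bM_t(\|f\|_{\Lambda^\gamma}1_{(0,T)})(t)$ for $k\ge1$, uniformly in $k$. For $k=0$ we use the dyadic decomposition of Lemma \ref{lem_8241556}, namely \eqref{eq8241429}, which bounds the term by $NT^\alpha\bM_t(\cdot)(t)$. Taking the supremum over $k$ in \eqref{def_hol} yields \eqref{eq8171625} with $N(T)=N(1+T^\alpha)$. The exceptional null sets can be chosen independently of $k$, since there are only countably many $k$.
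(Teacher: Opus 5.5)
Your proof is correct. It uses the same ingredients as the paper: time mollification, the frequency localized maximum principle with Corollary \ref{cor_comparison}, the decay \eqref{eq8252345}, a maximal-function bound uniform in $k$, and \eqref{eq8241429} for $k=0$. The difference is the stage at which you let $\varepsilon\to0$.

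The paper first proves \eqref{eq8171625} for $u\in C^1([0,T];\Lambda^{\gamma+2}(\bR^d))$. It then applies this to $u^{(\varepsilon)}$ with forcing $f^\varepsilon=(\partial_t^\alpha u)^{(\varepsilon)}-\sum_{i,j}a_{ij}D_{ij}u^{(\varepsilon)}$ and passes to the limit in the maximal-function inequality itself. That requires $f^\varepsilon\to f$ in $\mathbf{\Lambda}_{p,\omega}^\gamma(T)$, the weighted maximal theorem to get a.e.\ convergence of the $\bM_t$ terms along a subsequence, and a separate reduction of $p=\infty$ to $q<\infty$.

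You instead pass to the limit in each dyadic Mittag-Leffler inequality; your $g_\varepsilon$ is exactly $\Delta_k f^\varepsilon$. There, convergence in $L_1$ in time is enough. Since $s^{\alpha-1}E_{\alpha,\alpha}(-cs^\alpha)\in L_1(0,T)$, Young's inequality gives $L_1(0,T)$ convergence of the right-hand side, hence a.e.\ convergence along a subsequence. This handles all $p$ at once, at the cost of countably many exceptional null sets, which you account for.

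Your radially decreasing majorant, with $\|s^{\alpha-1}(1+cs^\alpha)^{-2}\|_{L_1(0,\infty)}=1/(\alpha c)$, is a tidier version of the paper's dyadic sum \eqref{eq8170217}. It is correct up to a harmless factor $2$ because the kernel is one-sided and $\bM_t$ is centered.

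One statement should be dropped. The commutator $(L_Aw_k)^{(\varepsilon)}-L_Aw_k^{(\varepsilon)}$ does not tend to zero in $L_\infty$ in time in general. It stays of order one within distance $\varepsilon$ of a jump of $a_{ij}(t)$. Moreover, $w_k$ need not be essentially bounded in time when $p<\infty$. Only the $L_1$ convergence is needed for the limit, and you should say so explicitly.
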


\begin{proof}
First, we prove \eqref{eq8171625} when $u\in C^1([0,T];\Lambda^{\gamma+2}(\bR^d))$ and $u(0,\cdot)=0$.
Let
\begin{equation*}
    F(s) := \|f(s,\cdot)\|_{\Lambda^{\gamma}(\bR^d)} 1_{(0,T)}(s),
\end{equation*}
and extend $F$ by zero outside $(0,T)$. For $k\geq1$, by \eqref{eq8170208},
\begin{equation}\label{eq8170211}
    2^{(\gamma+2)k}\|\Delta_ku(t,\cdot)\|_{L_\infty(\bR^d)}\leq N\int_0^tK_k(t-s)F(s)ds,
\end{equation}
where $K_k(r):=2^{2k}r^{\alpha-1}E_{\alpha,\alpha}\left(-\kappa2^{2k}r^\alpha\right)$.
Due to \eqref{eq8252345} we have
\begin{equation} \label{eq8170213}
    0\leq K_k(r)\leq N\frac{2^{2k}r^{\alpha-1}}{(1+2^{2k}r^\alpha)^2}.
\end{equation}
Here, we note that the dependence on $\kappa$ can be removed by simply taking $\kappa=c_0/2$.
Since $F=0$ outside $(0,T)$,
\begin{equation} \label{eq8170214}
    \int_0^t K_k(t-s)F(s)ds = \sum_{n\in\bZ} \int_{t-2^n}^{t-2^{n-1}} K_k(t-s)F(s)ds.
\end{equation}
By \eqref{eq8170213}, for $s\in (t-2^n,t-2^{n-1})$,
\begin{equation*}
    K_k(t-s)\leq N2^{-n}\frac{2^{2k+\alpha n}}{(1+2^{2k+\alpha n})^2}.
\end{equation*}
Thus,
\begin{align} \label{eq8170216}
    \int_{t-2^n}^{t-2^{n-1}}K_k(t-s)F(s)ds &\leq N \frac{2^{2k+\alpha n}}{(1+2^{2k+\alpha n})^2} \aint_{t-2^n}^{t-2^{n-1}} F(s) ds \nonumber
    \\
    &\leq N \frac{2^{2k+\alpha n}}{(1+2^{2k+\alpha n})^2} \bM_tF(t).
\end{align}
Choose $n_k\in\bZ$ so that $2^{2k+\alpha n_k}\leq1<2^{2k+\alpha(n_k+1)}$, and put $a_k=2^{2k+\alpha n_k}$. 
Then $2^{-\alpha} < a_k \leq 1$. Since
\begin{equation*}
    \frac{x}{(1+x)^2}\leq\min\{x,x^{-1}\},
\end{equation*}
we obtain
\begin{equation}\label{eq8170217}
    \sum_{n\in\bZ}\frac{2^{2k+\alpha n}}{(1+2^{2k+\alpha n})^2}=\sum_{j\in\bZ}\frac{a_k2^{\alpha j}}{(1+a_k2^{\alpha j})^2}\leq\sum_{j\leq0}2^{\alpha j}+2^\alpha\sum_{j\geq1}2^{-\alpha j}\leq N,
\end{equation}
where $N$ is independent of $k$. Combining \eqref{eq8170211}, \eqref{eq8170214}, \eqref{eq8170216}, and \eqref{eq8170217}, we obtain
\begin{equation} \label{eq8181115}
    2^{(\gamma+2)k}\|\Delta_ku(t,\cdot)\|_{L_\infty(\bR^d)}\leq N\bM_tF(t).
\end{equation}

We now estimate $\Delta_0u$.
Let $n_t\in\bZ$ such that $2^{n_t-1}<t\leq2^{n_t}$. By the change of variables,
\begin{align} \label{eq9231819}
    \int_0^t(t-s)^{\alpha-1}F(s)ds &\leq\sum_{n\leq n_t}\int_{2^{n-1}}^{\min\{2^n,t\}} s^{\alpha-1}F(t-s) ds \nonumber
    \\
    &\leq N\sum_{n\leq n_t} 2^{\alpha n} \bM_tF(t)\leq N t^\alpha \bM_tF(t).
\end{align}
By \eqref{eq8261856}, we have
\begin{equation} \label{eq8181114}
    \|\Delta_0u(t,\cdot)\|_{L_\infty(\bR^d)}\leq NT^\alpha\bM_tF(t).
\end{equation}
Hence, by \eqref{def_hol}, \eqref{eq8181115} and \eqref{eq8181114}, we obtain \eqref{eq8171625} when $u\in C^1([0,T];\Lambda^{\gamma+2}(\bR^d))$.

Next, we consider general $u\in \bH_{p,\omega,0}^{\alpha,\gamma+2}(T)$. Let $u^{(\varepsilon)}$ be as in \eqref{eq8252326}, and let $g^{(\varepsilon)} := \partial_t^\alpha u^{(\varepsilon)} = (\partial_t^\alpha u)^{(\varepsilon)}$. Then $u^{(\varepsilon)}\in C^1([0,T];\Lambda^{\gamma+2}(\bR^d))$ satisfies
\begin{equation*}
    \partial_t^\alpha u^{(\varepsilon)}(t,x) = \sum_{i,j=1}^d a_{ij}(t)D_{ij}u^{(\varepsilon)}(t,x) + f^\varepsilon(t,x),
\end{equation*}
where
\begin{equation*}
    f^\varepsilon := g^{(\varepsilon)} - \sum_{i,j=1}^d a_{ij}(t)D_{ij}u^{(\varepsilon)}.
\end{equation*}
Since $u^{(\varepsilon)}\in C^1([0,T];\Lambda^{\gamma+2}(\bR^d))$, by the above result,
\begin{equation} \label{eq9142356}
        \|u^{(\varepsilon)}(t,\cdot)\|_{\Lambda^{\gamma+2}(\bR^d)} \leq N(T) \bM_t\left(\|f^\varepsilon(\star,\cdot)\|_{\Lambda^{\gamma}(\bR^d)} 1_{(0,T)}(\star)\right)(t).
\end{equation}
If $p<\infty$, then by Lemma \ref{lem_dense}, $u^{(\varepsilon)}\to u$ in $\mathbf{\Lambda}_{p,\omega}^{\gamma+2}(T)$. Thus, there is a sequence $\varepsilon_n\to0$ such that
\begin{equation} \label{eq8262013}
    \|u^{(\varepsilon_n)}(t,\cdot)\|_{\Lambda^{\gamma+2}(\bR^d)} \to \|u(t,\cdot)\|_{\Lambda^{\gamma+2}(\bR^d)}
\end{equation}
as $n\to\infty$ for almost every $t\in(0,T)$. When $p=\infty$, since $\mathbf{\Lambda}_{\infty}^{\gamma+2}(T) \subset \mathbf{\Lambda}_{q,1}^{\gamma+2}(T)$ for any $q\in(1,\infty)$, we still obtain \eqref{eq8262013}.

 We also show that there is a subsequence $\varepsilon_{n_k}$ of $\varepsilon_n$ such that
\begin{equation} \label{eq8262016}
    \bM_t\left(\|f^{\varepsilon_{n_k}}(\star,\cdot)\|_{\Lambda^{\gamma}(\bR^d)} 1_{(0,T)}(\star)\right)(t) \to \bM_t\left(\|f(\star,\cdot)\|_{\Lambda^{\gamma}(\bR^d)} 1_{(0,T)}(\star)\right)(t)
\end{equation}
as $n_k\to\infty$ for almost every $t\in(0,T)$.
As above, when $p=\infty$, $\bH_{\infty,0}^{\alpha,\gamma+2}(T) \subset \bH_{q,1,0}^{\alpha,\gamma+2}(T)$ for any $q\in(1,\infty)$. Thus, one just needs to prove \eqref{eq8262016} when $p<\infty$.
In this case, by \eqref{eq8251608},
\begin{equation*}
    f^{\varepsilon_{n}}\to \partial_t^\alpha u - \sum_{i,j=1}^d a_{ij}(t)D_{ij}u = f
\end{equation*}
in $\mathbf{\Lambda}_{p,\omega}^{\gamma}(T)$ as $\varepsilon_{n}\to0$. Therefore, by the weighted Hardy-Littlewood maximal function theorem,
\begin{equation*}
    \left\|\bM_t\left(\|(f^{\varepsilon_{{n}}} - f)(\star,\cdot)\|_{\Lambda^{\gamma}(\bR^d)} 1_{(0,T)}(\star)\right)(t) \right\|_{L_p((0,T),\omega dt)} \to 0
\end{equation*}
as $\varepsilon_{n} \to 0$, which allows us to extract a further subsequence satisfying \eqref{eq8262016}.

Lastly, to conclude \eqref{eq8171625}, it suffices to replace $\varepsilon$ by $\varepsilon_{n_k}$ in \eqref{eq9142356} and let $n_k\to\infty$.
Combining \eqref{eq8262013} and \eqref{eq8262016}, we obtain \eqref{eq8171625}.
The lemma is proved.
\end{proof}

\subsection{General coefficients} \label{sec_general}

We first state some auxiliary tools, which are well known; see, for instance, \cite{CR26}.

\begin{lemma} \label{lem_aux}
    Let $\gamma\in(0,\infty)$.

    (i) Let $\zeta\in C_c^\infty(\bR^d)$ satisfy $\zeta=1$ on $B_1$, $\zeta=0$ on $\bR^d\setminus B_2$, and $0\leq\zeta\leq1$, and denote $\zeta_r^y(x):=\zeta((x-y)/r)$.
Then for $r>0$,
\begin{equation} \label{locali}
\|h\|_{\Lambda^\gamma(\bR^d)} \leq N(d,r,\zeta,\gamma) \sup_{y\in\bR^d} \|h\zeta_r^y\|_{\Lambda^\gamma(\bR^d)}.
\end{equation}

(ii) For $u\in \Lambda^1(\bR^d)$,
\begin{equation} \label{eqlog}
    \sup_{\substack{|x-y|<1 \\ x\neq y}} \frac{|u(x)-u(y)|}{|x-y|\log_2(2/|x-y|)} \leq N(d)\|u\|_{\Lambda^1(\bR^d)}.
\end{equation}

(iii) For $\varepsilon>0$ and $u\in \Lambda^{\gamma+2}(\bR^d)$,
\begin{align} \label{eq_inter}
\|u\|_{\Lambda^{\gamma+1}(\bR^d)} &\leq \varepsilon \|u\|_{\Lambda^{\gamma+2}(\bR^d)} + N(\varepsilon,d,\gamma) \|u\|_{\Lambda^\gamma(\bR^d)}, \nonumber
\\
\|D^2u\|_{L_\infty(\bR^d)} &\leq \varepsilon\|u\|_{\Lambda^{\gamma+2}(\bR^d)} + N(\varepsilon,d,\gamma) \|u\|_{L_\infty(\bR^d)}.
\end{align}

(iv) For $u,v\in \Lambda^\gamma(\bR^d)$,
\begin{equation} \label{eqprod}
\|uv\|_{\Lambda^\gamma(\bR^d)} \leq N \left( \|u\|_{L_\infty(\bR^d)} \|v\|_{\Lambda^\gamma(\bR^d)} + \|u\|_{\Lambda^\gamma(\bR^d)} \|v\|_{L_\infty(\bR^d)} \right),
\end{equation}
where $N=N(d,\gamma)$.
\end{lemma}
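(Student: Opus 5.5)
The four assertions of Lemma~\ref{lem_aux} are standard facts about H\"older--Zygmund spaces. I would prove each from the Littlewood--Paley characterization \eqref{def_hol}, together with the equivalence of $\Lambda^\gamma$ with classical H\"older (or higher-difference) norms for $\gamma\in(0,\infty)$.

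For (ii), I take $u\in\Lambda^1$ and $|x-y|=\rho<1$, and choose $J\in\bN$ with $2^{-J}\approx\rho$. I split $u=\sum_{j\le J}\Delta_j u+\sum_{j>J}\Delta_j u$. The high-frequency part changes by at most $2\sum_{j>J}\|\Delta_ju\|_{L_\infty}\le N2^{-J}\|u\|_{\Lambda^1}$. For the low-frequency part I use Bernstein's inequality $\|\nabla\Delta_ju\|_{L_\infty}\le N2^j\|\Delta_ju\|_{L_\infty}\le N\|u\|_{\Lambda^1}$ and the mean value theorem. This gives a change of at most $N\rho(J+1)\|u\|_{\Lambda^1}\approx N\rho\log_2(2/\rho)\|u\|_{\Lambda^1}$, which is \eqref{eqlog}.

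For (iii), I write $\|u\|_{\Lambda^{\gamma+1}}=\sup_j2^{j(\gamma+1)}\|\Delta_ju\|_{L_\infty}$ and split at a threshold $j_0$. For $j>j_0$, $2^{j(\gamma+1)}\le2^{-j_0}2^{j(\gamma+2)}$. For $j\le j_0$, $2^{j(\gamma+1)}\le2^{j_0}2^{j\gamma}$. Choosing $2^{-j_0}\le\varepsilon$ gives the first inequality. For the second, I bound $\|D^2u\|_{L_\infty}\le\sum_j\|D^2\Delta_ju\|_{L_\infty}\le N\sum_j2^{2j}\|\Delta_ju\|_{L_\infty}$. The terms with $j>j_0$ are bounded by $N\sum_{j>j_0}2^{-j\gamma}\|u\|_{\Lambda^{\gamma+2}}\le N2^{-j_0\gamma}\|u\|_{\Lambda^{\gamma+2}}$. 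The terms with $j\le j_0$ are bounded by $N2^{2j_0}\|u\|_{L_\infty}$, using $\|\Delta_ju\|_{L_\infty}\le N\|u\|_{L_\infty}$. Again I take $j_0$ large depending on $\varepsilon$.

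For (iv), I use Bony's paraproduct decomposition $uv=T_uv+T_vu+R(u,v)$. The standard estimates are
\[
\|T_uv\|_{\Lambda^\gamma}\le N\|u\|_{L_\infty}\|v\|_{\Lambda^\gamma},\qquad
\|R(u,v)\|_{B^\gamma_{\infty,\infty}}\le N\|u\|_{L_\infty}\|v\|_{\Lambda^\gamma}.
\]
The remainder estimate needs $\gamma>0$: the sum $\sum_{j\ge k-3}2^{-j\gamma}$ converges after the spectral localization of each block $\Delta_ju\,\tilde\Delta_jv$ to a ball of radius $\approx2^j$. Alternatively, for non-integer $\gamma$ one can argue with the Leibniz rule and classical H\"older seminorms. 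The Zygmund (integer $\gamma$) case then forces the paraproduct route anyway, so I would use the paraproduct for all $\gamma$.

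For (i), the direction requiring work is bounding $\|h\|_{\Lambda^\gamma}$ by localized norms. First, $\|h\|_{L_\infty}\le\sup_y\|h\zeta_r^y\|_{L_\infty}$ trivially. For the seminorm, I use the characterization via $m$-th order differences with $m>\gamma$, $m\in\bN$:
\[
[h]\approx\sup_{x}\sup_{0<|z|\le r/(2m)}\frac{|\delta_z^mh(x)|}{|z|^\gamma},
\]
together with large increments controlled by $\|h\|_{L_\infty}$. For $|z|\le r/(2m)$, all points $x+iz$ lie in $B_r(x)$, where $\zeta_r^x=1$. Hence $\delta^m_zh(x)=\delta_z^m(h\zeta_r^x)(x)$, and the claim follows with a constant depending on $r$.

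The main obstacle is justifying these difference-norm characterizations of $B^\gamma_{\infty,\infty}$, including the step of restricting to small increments. Both are classical (Triebel), and I would cite them rather than reprove them. The result is a constant $N(d,r,\zeta,\gamma)$, as stated.
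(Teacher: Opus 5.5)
The paper gives no proof of this lemma: it lists (i)--(iv) as well-known tools and refers to \cite{CR26}. Your sketches are correct and supply a self-contained justification.

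Parts (ii) and (iii) follow exactly as you describe, from \eqref{def_hol}, Bernstein's inequality and a split at a frequency threshold $2^{-J}\approx|x-y|$ or $2^{-j_0}\approx\varepsilon$. In the second inequality of (iii), $\gamma>0$ is what makes the high-frequency tail summable. Part (iv) is the standard Bony decomposition, and $\gamma>0$ is exactly what makes the remainder $R(u,v)$ summable.

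The only external input in your argument is the $m$-th difference characterization of $B^\gamma_{\infty,\infty}$ used in (i), which you flag as the main obstacle. You can remove it, so that all four parts rest on \eqref{def_hol} alone. For $x\in\bR^d$ and $j\geq1$, write
\[
\Delta_jh(x)=\Delta_j(h\zeta_r^x)(x)+\int_{\bR^d}\psi_j(x-w)\bigl(1-\zeta_r^x(w)\bigr)h(w)\,dw .
\]
The first term is at most $2^{-j\gamma}\|h\zeta_r^x\|_{\Lambda^\gamma}$. Since $1-\zeta_r^x$ vanishes on $B_r(x)$, the second term is bounded by $\|h\|_{L_\infty}\int_{|w|\geq 2^jr}|\psi(w)|\,dw$. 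For any $M>\gamma$ this is at most $N_M(2^jr)^{-M}\|h\|_{L_\infty}$. The case $j=0$ is trivial, since $|\Delta_0h(x)|\leq\|\psi_0\|_{L_1}\|h\|_{L_\infty}$. Combining these with $\|h\|_{L_\infty}\leq\sup_y\|h\zeta_r^y\|_{L_\infty}\leq N\sup_y\|h\zeta_r^y\|_{\Lambda^\gamma}$ gives \eqref{locali} directly.

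This route needs no equivalence theorem, and it shows at once that $h\in\Lambda^\gamma$ whenever the right-hand side is finite. Your difference-quotient version is equally valid, but it leans on the Triebel characterization, including the reduction to small increments, which the Littlewood--Paley argument avoids.
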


\begin{lemma}\label{lem8080013}
Let $0\leq S_0<S_1<\infty$, $\alpha\in(0,1)$, $\gamma\in(0,\infty)$, and $p\in(1,\infty]$. Let $\omega\in A_p(\bR)$ with $[\omega]_{A_p(\bR)}\leq K_0$ (put $\omega\equiv1$ and $K_0=1$ when $p=\infty$). Then for $v\in \bH_{p,\omega,0}^{\alpha,\gamma+2}(S_0,S_1)$,
\begin{equation} \label{eq8180015}
\|v\|_{\mathbf{\Lambda}_{p,\omega}^{\gamma}(S_0,S_1)} \leq N(S_1-S_0)^\alpha\|\partial_{t,S_0}^\alpha v\|_{\mathbf{\Lambda}_{p,\omega}^{\gamma}(S_0,S_1)},
\end{equation}
where $N$ depends only on $\alpha$, $p$, and $K_0$.
\end{lemma}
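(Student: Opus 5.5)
The plan is to write $v$ as a fractional integral of its Caputo derivative and then apply Lemma~\ref{lem_8241556}. Let $g:=\partial_{t,S_0}^\alpha v\in\mathbf{\Lambda}_{p,\omega}^{\gamma}(S_0,S_1)$. By definition of $\bH_{p,\omega,0}^{\alpha,\gamma+2}(S_0,S_1)$, the function $I_{S_0}^{1-\alpha}v$ is absolutely continuous with values in $\Lambda^\gamma(\bR^d)$, vanishes at $S_0$, and has derivative $g$. Hence
\begin{equation*}
I_{S_0}^{1-\alpha}v=I_{S_0}^1 g \quad\text{a.e. on }(S_0,S_1).
\end{equation*}

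Next, I apply $I_{S_0}^{\alpha}$ to both sides and use the semigroup identity \eqref{eq8241804} of Lemma~\ref{lem10122003}(i). This gives
\begin{equation*}
I_{S_0}^1 v=I_{S_0}^{\alpha}I_{S_0}^{1-\alpha}v=I_{S_0}^{\alpha}I_{S_0}^1 g=I_{S_0}^{1}I_{S_0}^{\alpha} g .
\end{equation*}
Differentiating via \eqref{eq8241829} (Lemma~\ref{lem10122003}(ii) with $\alpha=0$ applied to $v$ and to $I_{S_0}^\alpha g$, which lies in $\mathbf{\Lambda}_{p,\omega}^\gamma$ by Lemma~\ref{lem_8241556}) yields $v=I_{S_0}^\alpha g$ a.e. This is exactly the computation in \eqref{eq9290128} from the proof that $\bH_{p,\omega,0}^{\alpha,\gamma+2}(S,T)$ is a Banach space. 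Note that $v\in\mathbf{\Lambda}_{p,\omega}^{\gamma+2}\subset\mathbf{\Lambda}_{p,\omega}^{\gamma}$, so all the lemmas apply with the exponent $\gamma$.

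Finally, \eqref{eq8241430} applied to $g$ on $(S_0,S_1)$ gives
\begin{equation*}
\|v\|_{\mathbf{\Lambda}_{p,\omega}^{\gamma}(S_0,S_1)}=\|I_{S_0}^\alpha g\|_{\mathbf{\Lambda}_{p,\omega}^{\gamma}(S_0,S_1)}\leq N(S_1-S_0)^\alpha\|g\|_{\mathbf{\Lambda}_{p,\omega}^{\gamma}(S_0,S_1)},
\end{equation*}
with $N=N(\alpha,p,K_0)$, as required.

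The only delicate point is justifying the identity $v=I_{S_0}^\alpha g$ rigorously: each identity holds only almost everywhere, and the semigroup property is stated for elements of $\mathbf{\Lambda}_{p,\omega}^\gamma$. Both issues are covered by Lemmas~\ref{lem_8241556} and~\ref{lem10122003}, so I do not expect a genuine obstacle here.
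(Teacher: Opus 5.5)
Your proposal is correct and follows essentially the same route as the paper: the paper also obtains $v=I_{S_0}^{\alpha}\partial_{t,S_0}^{\alpha}v$ from the computation \eqref{eq9290128}, which you reproduce with a slightly rearranged use of the semigroup identity and differentiation. It then concludes by applying \eqref{eq8241430} to $g=\partial_{t,S_0}^{\alpha}v$, exactly as you do.
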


\begin{proof}
By \eqref{eq9290128}, $I_{S_0}^{\alpha}\partial_{t,S_0}^{\alpha} v =v$.
In other words,
\begin{equation*}
v(t) = \frac{1}{\Gamma(\alpha)}\int_{S_0}^t(t-s)^{\alpha-1}\partial_{t,S_0}^\alpha v(s)ds.
\end{equation*}
Thus, \eqref{eq8180015} follows from \eqref{eq8241430}.
The lemma is proved.
\end{proof}

\begin{lemma} \label{lem_apriori}
Let the assumptions of Theorem \ref{thm_main} be satisfied. Then for any $u\in\bH_{p,\omega,0}^{\alpha,\gamma+2}(T)$ satisfying \eqref{eq_intro}, we have
\begin{equation} \label{est_apriori}
\|\partial_t^\alpha u\|_{\mathbf{\Lambda}_{p,\omega}^\gamma(T)} + \|u\|_{\mathbf{\Lambda}_{p,\omega}^{\gamma+2}(T)} \leq N \|f\|_{\mathbf{\Lambda}_{p,\omega}^\gamma(T)},
\end{equation}
where $N=N(d,\alpha,\gamma,p,K_0,\nu,K,T)$.
\end{lemma}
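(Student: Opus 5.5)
Starting from the simple-coefficient estimate \eqref{eq8171625}, I will first get the spatially constant case in integrated form. For $a_{ij}=a_{ij}(t)$ and $b_i=c=0$, apply the weighted maximal function theorem to \eqref{eq8171625}. This gives $\|u\|_{\mathbf{\Lambda}^{\gamma+2}_{p,\omega}(S,T')}\le N(T')\|f\|_{\mathbf{\Lambda}^{\gamma}_{p,\omega}(S,T')}$, and then $\|\partial_t^\alpha u\|$ is bounded by reading it off from the equation. Because the equation is time-shift invariant, the same bound holds on any interval $(S,S+\delta)$ for functions in $\bH^{\alpha,\gamma+2}_{p,\omega,0}(S,S+\delta)$. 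The constant $N(\delta)$ is increasing in $\delta$, so it stays uniform for $\delta\le T$.

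Next comes the spatial freezing. Fix $r>0$ small and $y\in\bR^d$, and set $v=u\zeta_r^y$. Then $v$ solves
\[
\partial_t^\alpha v=a_{ij}(t,y)D_{ij}v+\tilde f,
\qquad
\tilde f=\zeta_r^y f+(a_{ij}-a_{ij}(t,y))\zeta_r^yD_{ij}u+\text{(lower order terms)}.
\]
The lower order terms come from $b_i$, $c$, and derivatives of $\zeta_r^y$ hitting $u$. I will estimate $\|(a_{ij}-a_{ij}(\cdot,y))\zeta_{2r}^y D_{ij}u\|_{\Lambda^\gamma}$ with the product rule \eqref{eqprod}. The $L_\infty$ factor of the coefficient difference on $B_{2r}(y)$ is at most $N r^{\min\{\gamma,1\}}$, with a logarithmic loss via \eqref{eqlog} when $\gamma=1$, and in any case it is $o(1)$ as $r\to0$. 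The complementary product term involves only $\|D^2u\|_{L_\infty}$, which I control by \eqref{eq_inter}. Taking the supremum over $y$ and using \eqref{locali} then gives, for a.e. $t$ after the $L_p(\omega)$ norm,
\[
\|u\|_{\mathbf{\Lambda}^{\gamma+2}_{p,\omega}(S,S+\delta)}\le N\|f\|+\big(\varepsilon(r)+\varepsilon\big)\|u\|_{\mathbf{\Lambda}^{\gamma+2}}+N(r,\varepsilon)\|u\|_{\mathbf{\Lambda}^{\gamma}_{p,\omega}(S,S+\delta)}.
\]
Absorbing the middle term yields a bound of $\|u\|_{\gamma+2}$ by $\|f\|+\|u\|_{\gamma}$. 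The main obstacle is the order of the steps: the constant in \eqref{locali} depends on $r$, so $r$ must be chosen first and the absorption must use a small multiple that compensates for $N(r)$. I would handle this by first localizing at the fixed scale $r$, where the sup over $y$ is harmless, and only then choosing $\varepsilon$ in \eqref{eq_inter} relative to $N(d,r,\zeta,\gamma)$.

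The remaining task is removing $\|u\|_{\mathbf{\Lambda}^{\gamma}}$, and I will do it by localizing in time. On $(0,\delta)$, Lemma~\ref{lem8080013} gives $\|u\|_{\gamma}\le N\delta^\alpha\|\partial_t^\alpha u\|_{\gamma}\le N\delta^\alpha(\|u\|_{\gamma+2}+\|f\|)$. Choosing $\delta$ small lets me absorb this term and gives the estimate on $(0,\delta)$. The difficulty is that the Caputo derivative is nonlocal, so the intervals cannot simply be restarted. To deal with this, I will use induction on $[0,m\delta]$ with a time cutoff. Take $\eta(t)$ smooth with $\eta=0$ on $[0,(m-1)\delta]$ and $\eta=1$ on $[(m-\tfrac12)\delta,T]$, and consider $\eta u$. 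It vanishes near $(m-1)\delta$ and solves the equation with right-hand side
\[
\eta f+\big(\partial_t^\alpha(\eta u)-\eta\,\partial_t^\alpha u\big).
\]
This commutator is an integral operator in time. Its kernel $\int_0^t(t-s)^{-\alpha-1}(\eta(t)-\eta(s))u(s)\,ds$ is bounded by terms involving $u$ only through $\|u\|_{\Lambda^{\gamma}}$; since this is nonsingular when $\eta$ is Lipschitz, it is dominated by $N\delta^{-1}I^{1-\alpha}$ plus maximal function terms. It is therefore estimated by $N(\delta)\|u\|_{\mathbf{\Lambda}^\gamma_{p,\omega}(0,m\delta)}$, which the induction hypothesis bounds by $\|f\|$.

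Because $\eta u$ starts at time $(m-1)\delta$ with zero data, applying the short-interval estimate on an interval of length at most $\delta$ closes the induction. After finitely many steps, about $T/\delta$ of them, this gives \eqref{est_apriori}, and the bound for $\partial_t^\alpha u$ again follows from the equation together with \eqref{eqprod}.
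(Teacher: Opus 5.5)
The gap is in the spatial freezing step: the absorption does not close as you set it up. Your time-localization step is sound and matches the paper's: short intervals via Lemma~\ref{lem8080013}, then a cutoff whose commutator is bounded by $\delta^{-1}I^{1-\alpha}$ plus a dyadic maximal sum. There is one indexing slip. If the hypothesis covers $(0,m\delta)$ and $\eta$ switches on in $[(m-1)\delta,(m-\frac12)\delta]$, an interval of length $\delta$ starting at $(m-1)\delta$ gives nothing new. You must work on $((m-1)\delta,(m+1)\delta)$, of length $2\delta$, as the paper does.

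In the freezing step, two things go wrong.

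\textbf{The supremum over $y$ must be taken at fixed $t$.} In \eqref{locali} the supremum sits at fixed $t$, so you need $\bigl\|\sup_y\|u(t,\cdot)\zeta_r^y\|_{\Lambda^{\gamma+2}}\bigr\|_{L_p(\omega)}$. Applying the integrated estimate to each $u\zeta_r^y$ separately only controls $\sup_y\|u\zeta_r^y\|_{\mathbf{\Lambda}^{\gamma+2}_{p,\omega}}$, which can be much smaller when $p<\infty$. So you must not pass to the integrated form beforehand. You need \eqref{eq8171625} pointwise in $t$, together with $\sup_y\bM_tF_y\le\bM_t(\sup_yF_y)$. This is exactly why the simple-coefficient bound is proved in maximal-function form.

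\textbf{The freezing error is measured in the wrong norm.} You write it as $\varepsilon(r)\|u\|_{\Lambda^{\gamma+2}}$ with $\varepsilon(r)$ comparable to $\rho_\gamma(r)$, and apply \eqref{locali} afterwards. That multiplies $\varepsilon(r)$ by $N(d,r,\zeta,\gamma)$, and \eqref{locali} gives no information on how this constant behaves as $r\to0$. So you cannot conclude that the product is small. Choosing the interpolation parameter $\varepsilon$ after $r$ handles only the $\varepsilon$-term. Your displayed inequality with coefficient $\varepsilon(r)+\varepsilon$ is therefore unjustified, unless you separately prove a version of \eqref{locali} with a constant uniform in small $r$.

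The paper's repair is to keep the top-order error localized and to absorb before returning to $u$.
\begin{enumerate}
\item Since $D_{ij}(u\zeta_r^y)$ is supported in $B_{2r}(y)$, write the freezing error as $[(a_{ij}-a_{ij}(t,y))\chi_r^y]\,D_{ij}(u\zeta_r^y)$, with $\chi_r^y=1$ on $B_{3r}(y)$. Then \eqref{eqlog}, \eqref{eqprod} and \eqref{eq_inter} bound it by $N\rho_\gamma(r)\|u\zeta_r^y\|_{\Lambda^{\gamma+2}}+N(r)\|u\|_{\Lambda^{\gamma}}$, with $N$ independent of $r$.
\item Apply \eqref{eq8171625} to $u\zeta_r^y$ and set $V(t):=\sup_y\|u(t,\cdot)\zeta_r^y\|_{\Lambda^{\gamma+2}}$. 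This is measurable because $y\mapsto u\zeta_r^y$ is continuous.
\item The weighted maximal theorem gives $\|V\|_{L_p(\omega)}\le N\rho_\gamma(r)\|V\|_{L_p(\omega)}+N(r)\bigl(\|f\|_{\mathbf{\Lambda}^{\gamma}_{p,\omega}}+\|u\|_{\mathbf{\Lambda}^{\gamma+1}_{p,\omega}}+\|u\|_{\mathbf{\Lambda}^{\gamma}_{p,\omega}}\bigr)$.
\item Fix $r$ with $N\rho_\gamma(r)\le 1/2$ and absorb; this is legitimate since $\|V\|_{L_p(\omega)}<\infty$.
\item Only then apply \eqref{locali}. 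Its $r$-dependent constant now multiplies only $\|f\|$ and lower-order norms.
\item Finally, absorb the $\Lambda^{\gamma+1}$ term using \eqref{eq_inter}, with $\varepsilon$ chosen after $r$.
\end{enumerate}
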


\begin{proof}
We divide the proof into two steps.

\textbf{Step 1.}
In this step, we aim to prove that, for every $0\leq S_0<S_1\leq T$,
\begin{equation} \label{eq8180023}
\|\partial_{t,S_0}^\alpha v\|_{\mathbf{\Lambda}_{p,\omega}^\gamma(S_0,S_1)} + \|v\|_{\mathbf{\Lambda}_{p,\omega}^{\gamma+2}(S_0,S_1)} \leq N \big(\|g\|_{\mathbf{\Lambda}_{p,\omega}^\gamma(S_0,S_1)} + \|v\|_{\mathbf{\Lambda}_{p,\omega}^\gamma(S_0,S_1)}\big),
\end{equation}
whenever $v\in \bH_{p,\omega,0}^{\alpha,\gamma+2}(S_0,S_1)$ is a solution to
\begin{equation} \label{eq9291338}
\partial_{t,S_0}^\alpha v=\sum_{i,j=1}^da_{ij}D_{ij}v+\sum_{i=1}^db_iD_iv+cv+g.
\end{equation}
We emphasize that the constant in \eqref{eq8180023} is independent of $S_0$ and $S_1$.

We first assume that $b_i=c=0$. Fix $y\in\bR^d$, and let $r\in(0,1/4]$ and $\zeta_r^y$ be taken from Lemma \ref{lem_aux} $(i)$. Then $v^y:=v\zeta_r^y$ satisfies
\begin{equation*}
\partial_{t,S_0}^\alpha v^y(t,x) = \sum_{i,j=1}^da_{ij}(t,y) D_{ij}v^y(t,x) + F^y(t,x),
\end{equation*}
where
\begin{align*}
F^y(t,x) &:= \sum_{i,j=1}^d\big(a_{ij}(t,x)-a_{ij}(t,y)\big) D_{ij}v^y(t,x)+G^y(t,x)
\end{align*}
and
\begin{align*}
G^y(t,x) &:= \zeta_r^y(x) g(t,x) -\sum_{i,j=1}^d (a_{ij}(t,x) + a_{ji}(t,x)) D_iv(t,x)D_j \zeta_r^y(x)
\\
&\quad- \sum_{i,j=1}^da_{ij}(t,x)v(t,x) D_{ij} \zeta_r^y(x).
\end{align*}
Let $\chi\in C_c^\infty(B_4)$ satisfy $\chi=1$ on $B_3$. Then $\chi_r^y(x) := \chi((x-y)/r)\in C_c^\infty(B_{4r}(y))$ and $\chi_r^y=1$ on $B_{3r}(y)$. Let
\begin{equation*}
\rho_\gamma(r) := r^\gamma1_{\{0<\gamma<1\}}+r\log(2/r)1_{\{\gamma=1\}}+r1_{\{\gamma>1\}}.
\end{equation*}
Then by \eqref{eqlog} and \eqref{eqprod},
\begin{align} \label{eq8080042}
\|\big(a_{ij}(t,\cdot)-a_{ij}(t,y)\big)\chi_r^y\|_{L_\infty(\bR^d)} &\leq \rho_\gamma(r) N(K), \nonumber
\\
\|\big(a_{ij}(t,\cdot)-a_{ij}(t,y)\big)\chi_r^y\|_{\Lambda^\gamma(\bR^d)} &\leq N(r,K).
\end{align}
Since $D_{ij}v^y$ is supported in $B_{2r}(y)$, \eqref{eq_inter}, \eqref{eqprod}, and \eqref{eq8080042} imply that for any $\varepsilon>0$,
\begin{align*}
&\left\|\sum_{i,j=1}^d\big(a_{ij}(t,\cdot)-a_{ij}(t,y)\big)D_{ij}v^y(t,\cdot)\right\|_{\Lambda^\gamma(\bR^d)} 
\\
&\leq N\rho_\gamma(r)\|v^y(t,\cdot)\|_{\Lambda^{\gamma+2}(\bR^d)} + N(r) \|D^2v^y(t,\cdot)\|_{L_\infty(\bR^d)}
\\
&\leq (N\rho_\gamma(r)+N(r)\varepsilon)\|v^y(t,\cdot)\|_{\Lambda^{\gamma+2}(\bR^d)} + N(r,\varepsilon) \|v(t,\cdot)\|_{\Lambda^\gamma(\bR^d)}.
\end{align*}
Here, for the second inequality, we used
\begin{equation*}
    \|v^y(\star,\cdot)\|_{\Lambda^{\gamma'}(\bR^d)} \leq N(r)\|v(\star,\cdot)\|_{\Lambda^{\gamma'}(\bR^d)}, \quad \gamma'>0,
\end{equation*}
which follows from \eqref{eqprod}.
Thus, by taking $\varepsilon>0$ so that $N(r)\varepsilon\leq N\rho_\gamma(r)$,
\begin{align} \label{eq9291239}
    &\left\|\sum_{i,j=1}^d\big(a_{ij}(t,\cdot)-a_{ij}(t,y)\big)D_{ij}v^y(t,\cdot)\right\|_{\Lambda^\gamma(\bR^d)} \nonumber
\\
&\leq N\rho_\gamma(r) \|v^y(t,\cdot)\|_{\Lambda^{\gamma+2}(\bR^d)} + N(r) \|v(t,\cdot)\|_{\Lambda^\gamma(\bR^d)}.
\end{align}
Again by \eqref{eqprod},
\begin{equation*}
\|G^y(t,\cdot)\|_{\Lambda^\gamma(\bR^d)} \leq N(r) \big(\|g(t,\cdot)\|_{\Lambda^\gamma(\bR^d)}+\|v(t,\cdot)\|_{\Lambda^{\gamma+1}(\bR^d)} + \|v(t,\cdot)\|_{\Lambda^\gamma(\bR^d)}\big).
\end{equation*}
This and \eqref{eq9291239} imply that $F^y\in \mathbf{\Lambda}_{p,\omega}^\gamma(S_0,S_1)$. Thus, after translating the interval $(S_0,S_1)$ to $(0,S_1-S_0)$, we can apply \eqref{eq8171625} to obtain
\begin{align*}
\|v^y(t,\cdot)\|_{\Lambda^{\gamma+2}(\bR^d)} &\leq N \rho_\gamma(r) \bM_t\left( \|v^y(\star,\cdot)\|_{\Lambda^{\gamma+2}(\bR^d)} 1_{(S_0,S_1)}(\star) \right)(t) 
\\
&\quad+ N(r) \bM_t\left(\|g(\star,\cdot)\|_{\Lambda^{\gamma}(\bR^d)} 1_{(S_0,S_1)}(\star) \right)(t) 
\\
&\quad+ N(r)\bM_t\left( \|v(\star,\cdot)\|_{\Lambda^{\gamma+1}(\bR^d)} 1_{(S_0,S_1)}(\star) \right)(t) 
\\
&\quad+ N(r) \bM_t\left( \|v(\star,\cdot)\|_{\Lambda^{\gamma}(\bR^d)} 1_{(S_0,S_1)}(\star) \right)(t)
\end{align*}
for almost every $t\in (S_0,S_1)$.
By taking the supremum over $y\in\bR^d$ and using the weighted Hardy-Littlewood maximal function theorem,
\begin{align*}
\|V\|_{L_p((S_0,S_1),\omega dt)} &\leq N \rho_\gamma(r) \|V\|_{L_p((S_0,S_1),\omega dt)}
\\
&\quad+ N(r) \left( \|g\|_{\mathbf{\Lambda}_{p,\omega}^\gamma(S_0,S_1)} + \|v\|_{\mathbf{\Lambda}_{p,\omega}^{\gamma+1}(S_0,S_1)} + \|v\|_{\mathbf{\Lambda}_{p,\omega}^\gamma(S_0,S_1)} \right),
\end{align*}
where $V(t)=\sup_{y\in\bR^d}\|v^y(t,\cdot)\|_{\Lambda^{\gamma+2}(\bR^d)}$. Here, we note that since $y \to v^y=v\zeta_r^y$ is continuous in $\Lambda^{\gamma+2}(\bR^d)$, the supremum over $y\in\bR^d$ can be taken over a countable dense subset of $\bR^d$, and thus $V$ is measurable.
By choosing $r=r_0\in(0,1/4]$ sufficiently small so that $N\rho_\gamma(r_0)\leq1/2$, \eqref{locali} leads to
\begin{equation*}
\|v\|_{\mathbf{\Lambda}_{p,\omega}^{\gamma+2}(S_0,S_1)} \leq N \big(\|g\|_{\mathbf{\Lambda}_{p,\omega}^\gamma(S_0,S_1)} + \|v\|_{\mathbf{\Lambda}_{p,\omega}^{\gamma+1}(S_0,S_1)} + \|v\|_{\mathbf{\Lambda}_{p,\omega}^\gamma(S_0,S_1)}\big).
\end{equation*}
Now the estimate for $\|v\|_{\mathbf{\Lambda}_{p,\omega}^{\gamma+2}(S_0,S_1)}$ follows from \eqref{eq_inter}. To conclude \eqref{eq8180023}, one just needs to use \eqref{eq9291338}.

Next, we deal with general $b_i$ and $c$. By \eqref{eq_inter} and \eqref{eqprod},
\begin{equation*}
\left\| \sum_{i=1}^db_iD_iv+cv \right\|_{\mathbf{\Lambda}_{p,\omega}^\gamma(S_0,S_1)} \leq \varepsilon \|v\|_{\mathbf{\Lambda}_{p,\omega}^{\gamma+2}(S_0,S_1)} + N(\varepsilon) \|v\|_{\mathbf{\Lambda}_{p,\omega}^\gamma(S_0,S_1)}.
\end{equation*}
Thus, if we apply \eqref{eq8180023} with $g+\sum_i b_iD_iv+cv$ in place of $g$, and choose $\varepsilon$ sufficiently small, then we obtain \eqref{eq8180023}.

\textbf{Step 2.}
By \eqref{eq8180023} with $S_0=0$ and $S_1=T$, we have
\begin{equation*}
    \|\partial_{t}^\alpha u\|_{\mathbf{\Lambda}_{p,\omega}^\gamma(T)} + \|u\|_{\mathbf{\Lambda}_{p,\omega}^{\gamma+2}(T)} \leq N \big(\|f\|_{\mathbf{\Lambda}_{p,\omega}^\gamma(T)} + \|u\|_{\mathbf{\Lambda}_{p,\omega}^\gamma(T)}\big).
\end{equation*}
In this step, our goal is to estimate $\|u\|_{\mathbf{\Lambda}_{p,\omega}^\gamma(T)}$.

For an integer $m\geq2$, denote $\delta:=T/m$ and $s_k:=k\delta$ for $k=0,1,\ldots,m$. Here, $m$ will be determined later.
By Lemma \ref{lem8080013} and \eqref{eq8180023} with $(S_0,S_1)=(0,s_1)$,
\begin{equation} \label{eq8270027}
\|u\|_{\mathbf{\Lambda}_{p,\omega}^\gamma(0,s_1)} \leq N\delta^\alpha \big(\|f\|_{\mathbf{\Lambda}_{p,\omega}^\gamma(0,s_1)} + \|u\|_{\mathbf{\Lambda}_{p,\omega}^\gamma(0,s_1)}\big).
\end{equation}

For $k=1,\cdots,m-1$, take $\eta_k\in C^\infty(\bR)$ such that $\eta_k=0$ on $(-\infty,s_{k-1}]$, $\eta_k=1$ on $[s_k,\infty)$, and $|\eta_k'|\leq N\delta^{-1}$. Extend $u$ by zero for $t<0$ and put $u_k:=u\eta_k$. Then one can verify that $u_k \in \bH_{p,\omega,0}^{\alpha,\gamma+2}(s_{k-1},s_{k+1})$.
Moreover, as in the proof of \cite[Lemma 3.6]{DK19},
\begin{equation*} 
\partial_{t,s_{k-1}}^\alpha u_k = \sum_{i,j=1}^d a_{ij} D_{ij} u_k+\sum_{i=1}^d b_iD_i u_k + cu_k + f\eta_k + h_k,
\end{equation*}
where
\begin{align*}
h_k(t,x) &:= \frac{\alpha}{\Gamma(1-\alpha)}\int_{-\infty}^t(t-s)^{-1-\alpha}\big(\eta_k(t)-\eta_k(s)\big)u(s,x) ds
\\
&= \frac{\alpha}{\Gamma(1-\alpha)}\int_{-\infty}^t(t-s)^{-1-\alpha}\big(\eta_k(t)-\eta_k(s)\big)u(s,x) 1_{(0,s_k)}(s) ds.
\end{align*}
Here, the last equality holds since, when $t>s_k$, $\eta_k(t)-\eta_k(s)=0$ for $s>s_k$.
Using
\begin{equation*}
|\eta_k(t)-\eta_k(s)|\leq N\min\{1,|t-s|/\delta\},
\end{equation*}
we have
\begin{align*}
    \|h_k(t,\cdot)\|_{\Lambda^\gamma(\bR^d)} &\leq N \delta^{-1} \int_{t-\delta}^{t} |t-s|^{-\alpha} \|u(s,\cdot)\|_{\Lambda^\gamma(\bR^d)} 1_{(0,s_k)}(s) ds
    \\
    &\quad+ N\int_{-\infty}^{t-\delta} |t-s|^{-1-\alpha} \|u(s,\cdot)\|_{\Lambda^\gamma(\bR^d)} 1_{(0,s_k)}(s) ds
    \\
    &=: J_1 + J_2.
\end{align*}
Actually, $J_1(t) = N \delta^{-1} I_{t-\delta}^{1-\alpha}(\|u(\star,\cdot)\|_{\Lambda^\gamma(\bR^d)}1_{(0,s_k)}(\star))(t)$. Thus, by \eqref{eq8241429},
\begin{equation} \label{eq8270005}
    J_1(t) \leq N \delta^{-\alpha} \bM_t\left( \|u(\star,\cdot)\|_{\Lambda^\gamma(\bR^d)} 1_{(0,s_k)}(\star) \right)(t).
\end{equation}
For $J_2$,
\begin{align*}
    J_2(t) &= N \sum_{j=0}^{\infty} \int_{t-2^{j+1}\delta}^{t-2^{j}\delta} |t-s|^{-1-\alpha} \|u(s,\cdot)\|_{\Lambda^\gamma(\bR^d)} 1_{(0,s_k)}(s) ds
    \\
    &\leq N \delta^{-1-\alpha} \sum_{j=0}^{\infty} 2^{-(1+\alpha)j} \int_{t-2^{j+1}\delta}^{t-2^{j}\delta} \|u(s,\cdot)\|_{\Lambda^\gamma(\bR^d)} 1_{(0,s_k)}(s) ds
    \\
    &\leq N \delta^{-\alpha} \bM_t\left( \|u(\star,\cdot)\|_{\Lambda^\gamma(\bR^d)} 1_{(0,s_k)}(\star) \right)(t) \sum_{j=0}^{\infty} 2^{-\alpha j}
    \\
    &= N \delta^{-\alpha} \bM_t\left( \|u(\star,\cdot)\|_{\Lambda^\gamma(\bR^d)} 1_{(0,s_k)}(\star) \right)(t).
\end{align*}
This and \eqref{eq8270005} yield
\begin{equation*}
\|h_k(t,\cdot)\|_{\Lambda^\gamma(\bR^d)} \leq N \delta^{-\alpha} \bM_t\big( \|u(\star,\cdot)\|_{\Lambda^\gamma(\bR^d)} 1_{(0,s_k)}(\star) \big)(t).
\end{equation*}
Therefore, by the weighted Hardy-Littlewood maximal function theorem,
\begin{equation} \label{eq8180118}
\|h_k\|_{\mathbf{\Lambda}_{p,\omega}^\gamma(s_{k-1},s_{k+1})} \leq N \delta^{-\alpha}\|u\|_{\mathbf{\Lambda}_{p,\omega}^\gamma(0,s_k)}.
\end{equation}
By Lemma \ref{lem8080013}, \eqref{eq8180023}, and \eqref{eq8180118},
\begin{align*}
\|u_k\|_{\mathbf{\Lambda}_{p,\omega}^\gamma(s_{k-1},s_{k+1})} &\leq N \delta^\alpha\|\partial_{t,s_{k-1}}^\alpha u_k\|_{\mathbf{\Lambda}_{p,\omega}^\gamma(s_{k-1},s_{k+1})}
\\
&\leq N \delta^\alpha \|f\eta_k\|_{\mathbf{\Lambda}_{p,\omega}^\gamma(s_{k-1},s_{k+1})} + N \delta^\alpha \|h_k\|_{\mathbf{\Lambda}_{p,\omega}^\gamma(s_{k-1},s_{k+1})}
\\
&\quad+ N \delta^\alpha \|u_k\|_{\mathbf{\Lambda}_{p,\omega}^\gamma(s_{k-1},s_{k+1})}
\\
&\leq N\delta^\alpha \|f\|_{\mathbf{\Lambda}_{p,\omega}^\gamma(T)} + N\|u\|_{\mathbf{\Lambda}_{p,\omega}^\gamma(0,s_k)} + N \delta^\alpha \|u_k\|_{\mathbf{\Lambda}_{p,\omega}^\gamma(s_{k-1},s_{k+1})}.
\end{align*}
Recall that $\delta=T/m$. Hence, by taking $m$ sufficiently large, one can absorb the last term of the right-hand side into the left-hand side, and
\begin{equation*}
    \|u_k\|_{\mathbf{\Lambda}_{p,\omega}^\gamma(s_{k-1},s_{k+1})} \leq N \|f\|_{\mathbf{\Lambda}_{p,\omega}^\gamma(T)} + N\|u\|_{\mathbf{\Lambda}_{p,\omega}^\gamma(0,s_k)}.
\end{equation*}
Since this inequality holds for $k=1,\cdots,m-1$, one can deduce that
\begin{equation*}
    \|u\|_{\mathbf{\Lambda}_{p,\omega}^\gamma(T)} \leq N(m) \big(\|f\|_{\mathbf{\Lambda}_{p,\omega}^\gamma(T)} + \|u\|_{\mathbf{\Lambda}_{p,\omega}^\gamma(0,s_1)}\big).
\end{equation*}
We further take $m$ sufficiently large to absorb the term involving $\|u\|_{\mathbf{\Lambda}_{p,\omega}^\gamma(0,s_1)}$ on the right-hand side of \eqref{eq8270027} into the left-hand side. Then we get $\|u\|_{\mathbf{\Lambda}_{p,\omega}^\gamma(T)} \leq N \|f\|_{\mathbf{\Lambda}_{p,\omega}^\gamma(T)}$.
The lemma is proved.
\end{proof}

\section{Proof of Theorem \ref{thm_main}}
\label{sec_solvability}

In this section, we prove our main result. Our strategy is to first establish solvability for the simple equation 
\begin{equation} \label{eq_simple}
    \partial_t^\alpha u(t,x) = \Delta u(t,x) + f(t,x)
\end{equation}
and then treat the general case using the method of continuity and the a priori estimate \eqref{est_apriori}.
For \eqref{eq_simple}, the representation formula for the solution is well known. Let $q(t,x)$ be the fractional heat forcing kernel defined by
\begin{equation*}
    \cF[q(t,\cdot)](\xi) = t^{\alpha-1} E_{\alpha,\alpha}(-t^\alpha |\xi|^2).
\end{equation*}
Moreover,
\begin{equation} \label{eq8281331}
    \|q(t,\cdot)\|_{L_1(\bR^d)} \leq Nt^{\alpha-1}
\end{equation}
and for every $T_0\in(0,T)$,
\begin{equation} \label{eq8281509}
    \int_{\bR^d} \sup_{t\in [T_0,T]} \left(|q(t,x)| + |\partial_t q(t,x)| \right) dx <\infty.
\end{equation}
We refer the reader to \cite[Lemma 3.7 and Corollary 3.9]{KPR21} for the existence of $q(t,x)$, \eqref{eq8281331}, and \eqref{eq8281509}.
It is well known that, for example, if $f\in C_c^\infty((0,T)\times\bR^d)$, then 
\begin{equation} \label{eq8281511}
    u(t,x) := \int_0^t \int_{\bR^d} q(t-s,x-y) f(s,y) dyds
\end{equation}
is a solution to \eqref{eq_simple} in the classical sense. In our setting, however, $f\in \mathbf{\Lambda}_{p,\omega}^{\gamma}(T)$, and strong measurability needs to be taken into account. We explain this point in more detail below.

\begin{lemma} \label{lem8281644}
    Let $T\in(0,\infty)$, $\alpha\in(0,1)$, $\gamma\in(0,\infty)$, $p\in(1,\infty]$. Suppose that $\omega\in A_p(\bR)$ with $[\omega]_{A_p(\bR)}\leq K_0$ (put $\omega\equiv1$ and $K_0=1$ when $p=\infty$). Let $f \in \mathbf{\Lambda}_{p,\omega}^\gamma(T)$.
Then there exists a solution $u \in \bH_{p,\omega,0}^{\alpha,\gamma+2}(T)$ to \eqref{eq_simple}.
\end{lemma}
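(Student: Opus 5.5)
We will construct the solution by approximation combined with the a priori estimate. The model is the representation \eqref{eq8281511} for nice $f$.

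\textbf{Step 1: a dense class of nice data.} Take $f_n$ of the form
\begin{equation*}
f_n(t,x)=\sum_{m=1}^{M_n}\phi_{nm}(t)g_{nm}(x),
\end{equation*}
with $\phi_{nm}\in C_c^\infty((0,T))$ and $g_{nm}\in\Lambda^\gamma(\bR^d)$. For $p<\infty$, such $f_n$ converge to $f$ in $\mathbf{\Lambda}_{p,\omega}^\gamma(T)$. This follows by approximating $f$ with simple functions, as in the proof of Lemma \ref{lem_8241556}, and then approximating indicators of measurable sets by $C_c^\infty$ functions in $L_p(\omega dt)$, using local integrability of $\omega$. We need no spatial smoothing, because spatial regularity is handled by Fourier multipliers.

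\textbf{Step 2: nice data.} For one term $\phi(t)g(x)$, set $u=\int_0^t\phi(s)\,q(t-s,\cdot)*g\,ds$. By \eqref{eq8281331}, this is a Bochner integral in $\Lambda^\gamma$. To show $u(t)\in\Lambda^{\gamma+2}$, we work per Littlewood--Paley block. The symbol $\Delta_k\cF[q(r)]=r^{\alpha-1}E_{\alpha,\alpha}(-r^\alpha|\xi|^2)\cF[\psi_k]$ has $L_1$ kernel norm bounded by $N2^{2k}r^{\alpha-1}/(1+2^{2k}r^\alpha)^2$. This follows by scaling and the argument of \eqref{eq8170031}, together with derivative bounds for $E_{\alpha,\alpha}$.

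Summing the dyadic pieces exactly as in \eqref{eq8170217} gives $\|u(t)\|_{\Lambda^{\gamma+2}}\le N\bM_t(\|f\|_{\Lambda^\gamma}1_{(0,T)})(t)$. The same works for $\Delta_0$ with the bound $N t^\alpha$.

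Because $\phi$ vanishes near $0$, we can write $u(t)=\int_0^t \phi(t-s)q(s)*g\,ds$. Differentiating and using \eqref{eq8281509} shows that $u\in C^1([0,T];\Lambda^{\gamma+2})$ with $u(0)=0$. The identity $\partial_t^\alpha u=\Delta u+f$ is then verified on the Fourier side. For each $\xi$, $\cF[u](t,\xi)=\int_0^t(t-s)^{\alpha-1}E_{\alpha,\alpha}(-(t-s)^\alpha|\xi|^2)\phi(s)ds\,\cF[g]$ solves the scalar fractional ODE by \cite[Theorem 4.1]{KST06}. Testing against Schwartz functions lifts this to an identity in $\Lambda^\gamma$. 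Hence $u_n\in\bH^{\alpha,\gamma+2}_{p,\omega,0}(T)$ solves \eqref{eq_simple} with data $f_n$.

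\textbf{Step 3: passage to the limit.} We apply Lemma \ref{lem_apriori} (with $a_{ij}=\delta_{ij}$, $b=c=0$) to $u_n-u_m$. This shows $u_n$ is Cauchy in the Banach space $\bH_{p,\omega,0}^{\alpha,\gamma+2}(T)$. The limit $u$ satisfies $\partial_t^\alpha u=\Delta u+f$ a.e., since each term converges in $\mathbf{\Lambda}^\gamma_{p,\omega}(T)$.

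For $p=\infty$, $f\in\mathbf{\Lambda}^\gamma_{q,1}(T)$ for every $q<\infty$, so the above gives a solution $u\in\bH^{\alpha,\gamma+2}_{q,1,0}(T)$. This solution is independent of $q$ by uniqueness, which follows from Lemma \ref{lem_apriori}. It remains to show $u\in\bH_{\infty,0}^{\alpha,\gamma+2}$. The pointwise maximal bound from Step 2 passes to the limit along a.e.-convergent subsequences, giving $\|u(t)\|_{\Lambda^{\gamma+2}}\le N\|f\|_{\mathbf{\Lambda}^\gamma_\infty}$. Then $\partial_t^\alpha u=\Delta u+f$ lies in $\mathbf{\Lambda}^\gamma_\infty(T)$.

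\textbf{Main obstacle.} The main difficulty is Step 2. We must establish the kernel bounds for the Mittag-Leffler symbol in the $L_1$ norm, which requires derivative estimates for $E_{\alpha,\alpha}$. We must also justify the $C^1$-in-time regularity with values in $\Lambda^{\gamma+2}$ carefully, using \eqref{eq8281509} and strong measurability. Alternatively, we could obtain $C^1$ regularity through the mollification Lemma \ref{lem_dense} instead.
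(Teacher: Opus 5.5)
Your proof is correct and takes a genuinely different route from the paper's. The paper reduces to $f\in C^1([0,T];\Lambda^\gamma(\bR^d))$ and defines $u$ by \eqref{eq8281511}, using only the crude kernel facts \eqref{eq8281331}--\eqref{eq8281509}. It obtains the integrated equation for each $\Delta_k u$ from \cite[Theorem 1.1]{CKKW20} and then time-mollifies $u$. The $\Lambda^{\gamma+2}$ regularity of $u^{(\varepsilon)}$ is read off from the equation itself ($\Delta u^{(\varepsilon)}=\partial_t^\alpha u^{(\varepsilon)}-f^{(\varepsilon)}\in\Lambda^\gamma$ plus $(1-\Delta)^{-1}$). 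The bound comes from \eqref{eq8171625}, i.e.\ from the frequency localized maximum principle, so no Schauder-type kernel estimate is ever needed.

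You instead use tensor data $\phi(t)g(x)$ with $\phi\in C_c^\infty((0,T))$, which makes $u(0)=0$ and the $C^1$ regularity immediate because the time derivative falls on $\phi$. You replace \cite{CKKW20} by the scalar formula of \cite[Theorem 4.1]{KST06}, and you prove the $\Lambda^{\gamma+2}$ bound directly from Littlewood--Paley $L_1$ bounds on $q$. This is more elementary on the equation side. It does, however, reintroduce exactly the kernel analysis the paper avoids: you need $|E^{(j)}_{\alpha,\alpha}(-x)|\lesssim(1+x)^{-2-j}$, which the paper never uses.

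You can in fact drop those estimates. With your data, $u\in C^1([0,T];\Lambda^\gamma)$ by \eqref{eq8281331}--\eqref{eq8281509}. Once the equation holds, $\Delta u=\partial_t^\alpha u-\phi g\in C([0,T];\Lambda^\gamma)$, and likewise for $u'$, which is the solution with data $\phi'g$. Hence $u\in C^1([0,T];\Lambda^{\gamma+2})$, and \eqref{est_apriori} and \eqref{eq8171625} do the rest.

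If you keep your version, two small corrections are needed.
\begin{enumerate}
\item The bound you need is $\|\cF^{-1}[r^{\alpha-1}E_{\alpha,\alpha}(-r^\alpha|\xi|^2)\cF[\psi_k]]\|_{L_1}\le Nr^{\alpha-1}(1+2^{2k}r^\alpha)^{-2}$. The one you wrote carries an extra factor $2^{2k}$; it is the bound for the kernel of $\Delta\Delta_kq(r)$, and used literally the dyadic sum would only give $\Lambda^\gamma$.
\item Since $\cF[g]$ is merely a tempered distribution, do the Fourier-side verification by duality. Write $\langle u(t),\varphi\rangle=\langle g,W(t)\rangle$ with $W(t)=\int_0^t\phi(s)\,q(t-s)*\varphi\,ds\in L_1(\bR^d)$, apply the scalar formula to $\cF[\varphi]$, and pass $\partial_t^\alpha$ through the pairing.
\end{enumerate}
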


\begin{proof}
We divide the proof into the cases $p\in(1,\infty)$ and $p=\infty$.
First, we assume that $p\in(1,\infty)$.
     As in the proof of Lemma \ref{lem_dense}, $f$ can be approximated in $\mathbf{\Lambda}_{p,\omega}^\gamma(T)$ by functions in $C^1([0,T];\Lambda^\gamma(\bR^d))$. Thus, due to \eqref{est_apriori}, it suffices to prove the lemma when $f\in C^1([0,T];\Lambda^\gamma(\bR^d))$.

Let $u$ be defined by \eqref{eq8281511}.
     First, we show that $u$ is strongly measurable. Let
     \begin{equation*}
         v(t,s) :=\int_{\bR^d} q(t-s,x-y) f(s,y) dy.
     \end{equation*}
     Then by \eqref{eq8281509}, for $t_1\in(s_1+\delta,T)$ and $t_2\in (s_2+\delta,T)$ with $\delta>0$,
     \begin{align*}
         &\|v(t_1,s_1)-v(t_2,s_2)\|_{\Lambda^\gamma(\bR^d)} 
         \\
         &\leq N(\delta,T) \left(|t_1-t_2|+|s_1-s_2|\right) \sup_{s\in(0,T)}\left(\|f(s)\|_{\Lambda^\gamma(\bR^d)} + \|\partial_s f(s)\|_{\Lambda^\gamma(\bR^d)}\right).
     \end{align*}
     Thus, $v(t,s)$ is continuous on $\{(t,s): 0\leq s<t\leq T\}$ with values in $\Lambda^\gamma(\bR^d)$, which yields the measurability. By \eqref{eq8281331}, for $t\in(s,T)$,
     \begin{equation*}
         \|v(t,s)\|_{\Lambda^\gamma(\bR^d)} \leq N(t-s)^{\alpha-1} \|f(s)\|_{\Lambda^\gamma(\bR^d)}.
     \end{equation*}
     Thus, by Fubini's theorem, one can deduce that $u$ is also strongly measurable.
Moreover, by \eqref{eq9231819},
     \begin{equation*}
         \|u(t,\cdot)\|_{\Lambda^{\gamma}(\bR^d)} \leq N(T) \bM_t\left( \|f(\star,\cdot)\|_{\Lambda^\gamma(\bR^d)} 1_{(0,T)}(\star) \right)(t),
     \end{equation*}
     which together with the weighted Hardy-Littlewood maximal function theorem implies that $u\in \mathbf{\Lambda}_{p,\omega}^{\gamma}(T)$.

     For each $t\in(0,T)$,
     \begin{equation*}
         \cF[\Delta_ku(t,\cdot)](\xi)  = \int_0^t (t-s)^{\alpha-1} E_{\alpha,\alpha}(-(t-s)^\alpha|\xi|^2) \cF[\Delta_kf(s,\cdot)](\xi) ds,
     \end{equation*}
     which implies that
     \begin{equation*}
         \Delta_k u(t,x) = \int_0^t \int_{\bR^d} q(t-s,x-y) \Delta_k f(s,y) dyds.
     \end{equation*}
     Here, $\Delta_k f$ is infinitely differentiable in $x$, with bounded derivatives. Therefore, we can apply \cite[Theorem 1.1]{CKKW20} to obtain that for any $t\in(0,T]$,
    \begin{equation} \label{eq9231838}
        I^{1-\alpha}_0(\Delta_ku)(t) = \int_0^t \left( \Delta (\Delta_ku)(s) + \Delta_kf(s) \right) ds
    \end{equation}
    in the space of bounded uniformly continuous functions.
More precisely, the notation in the reference can be applied with the following choices:
\begin{itemize}
\item for the temporal part,
\begin{equation*}
w(t)=\frac{t^{-\alpha}}{\Gamma(1-\alpha)}, \qquad \nu(ds)=\frac{\alpha}{\Gamma(1-\alpha)}s^{-1-\alpha}\,ds,
\end{equation*}
and $S_r$ is the $\alpha$-stable subordinator satisfying $\bE[e^{-\lambda S_r}]=e^{-r\lambda^\alpha}$,
\item for the spatial part,
\begin{equation*}
(E,\mu(dx))=(\bR^d,dx), \qquad P_r^0=e^{r\Delta}, \qquad \cL=\Delta,
\end{equation*}
and $\bB$ is the space of bounded uniformly continuous functions.
\end{itemize}

Let us consider the time mollification given by \eqref{eq8252326}.
We claim that $u^{(\varepsilon)}\in \bH_{p,\omega,0}^{\alpha,\gamma+2}(T)$ and 
\begin{equation} \label{eq9231854}
        \|u^{(\varepsilon)}\|_{\bH_{p,\omega,0}^{\alpha,\gamma+2}(T)} \leq N \|f^{(\varepsilon)}\|_{\mathbf{\Lambda}_{p,\omega}^{\gamma}(T)}.
\end{equation}
First, we show that $I_0^{1-\alpha}u^{(\varepsilon)}$ is a $\Lambda^{\gamma}(\bR^d)$-valued absolutely continuous function such that $(I_0^{1-\alpha}u^{(\varepsilon)})(0,\cdot)=0$. Since $u \in \mathbf{\Lambda}_{p,\omega}^{\gamma}(T)$, as in Lemma \ref{lem_dense}, $u^{(\varepsilon)}$ is infinitely differentiable in $t$ with values in $\Lambda^{\gamma}(\bR^d)$, and $u^{(\varepsilon)}(0,\cdot)=0$. Thus, due to \eqref{eq8241804}, $I_0^{1-\alpha}u^{(\varepsilon)} = I_0^{2-\alpha}\frac{d}{dt}u^{(\varepsilon)}$, which leads to the desired result.

Since the existence of $\partial_t^\alpha (u^{(\varepsilon)}) \in \mathbf{\Lambda}_{p,\omega}^{\gamma}(T)$ easily follows from the differentiability of $u^{(\varepsilon)}$, to conclude $u^{(\varepsilon)} \in \bH_{p,\omega,0}^{\alpha,\gamma+2}(T)$, it remains to show that $u^{(\varepsilon)} \in \mathbf{\Lambda}_{p,\omega}^{\gamma+2}(T)$.
Due to \eqref{eq9231838}, $\Delta_k u^{(\varepsilon)} \in C^1([0,T];X_k)$ and is a pointwise solution to
\begin{equation*}
    \Delta_k \left(\partial_t^\alpha (u^{(\varepsilon)})\right) = \Delta_k \left((\Delta u)^{(\varepsilon)}\right) + \Delta_k f^{(\varepsilon)}, \quad (t,x)\in (0,T)\times\bR^d.
\end{equation*}
Thus, for each $t\in(0,T)$,
\begin{equation} \label{eq9231902}
    \Delta u^{(\varepsilon)} = \partial_t^\alpha (u^{(\varepsilon)}) - f^{(\varepsilon)}
\end{equation}
in the sense of tempered distributions. This and $\partial_t^\alpha (u^{(\varepsilon)}) \in \mathbf{\Lambda}_{p,\omega}^{\gamma}(T)$ imply that $\Delta u^{(\varepsilon)} \in \mathbf{\Lambda}_{p,\omega}^{\gamma}(T)$. Since $(1-\Delta)^{-1}: \Lambda^{\gamma}(\bR^d)\to \Lambda^{\gamma+2}(\bR^d)$ is a bounded isomorphism, the relation
\begin{equation*}
    u^{(\varepsilon)} = (1-\Delta)^{-1}\left[ u^{(\varepsilon)} - \Delta u^{(\varepsilon)} \right],
\end{equation*}
leads to $u^{(\varepsilon)} \in \mathbf{\Lambda}_{p,\omega}^{\gamma+2}(T)$, which in fact implies that $u^{(\varepsilon)} \in \bH_{p,\omega,0}^{\alpha,\gamma+2}(T)$.

Now we prove \eqref{eq9231854}. Due to \eqref{eq9231902}, one can use \eqref{eq8171625} to obtain that
\begin{align*}
        \|u^{(\varepsilon)}(t,\cdot)\|_{\Lambda^{\gamma+2}(\bR^d)} \leq N \bM_t\left(\|f^{(\varepsilon)}(\star,\cdot)\|_{\Lambda^{\gamma}(\bR^d)} 1_{(0,T)}(\star)\right)(t).
\end{align*}
Due to the strong measurability of $u^{(\varepsilon)}(t,\cdot)$, we can apply the weighted Hardy-Littlewood maximal function theorem to get \eqref{eq9231854}.

Let us now return to showing that $u$ is indeed a solution.
When $p<\infty$, since $f^{(\varepsilon)}\to f$ in $\mathbf{\Lambda}_{p,\omega}^{\gamma}(T)$, by \eqref{est_apriori}, $u^{(1/n)}$ is Cauchy in $\bH_{p,\omega,0}^{\alpha,\gamma+2}(T)$. Thus, one can deduce that $u\in \bH_{p,\omega,0}^{\alpha,\gamma+2}(T)$. Moreover, by \eqref{eq9231902}, $u$ is a solution to \eqref{eq_simple}.

Lastly, we deal with $p=\infty$. Note that $\mathbf{\Lambda}_{\infty}^{\gamma}(T) \subset \mathbf{\Lambda}_{q,1}^{\gamma}(T)$ for any $q\in(1,\infty)$. Thus, by the above result, there is a solution $u\in \bH_{q,1,0}^{\alpha,\gamma+2}(T)$ to \eqref{eq_simple}. Then by \eqref{eq8171625},
\begin{align*}
    \|u(t,\cdot)\|_{\Lambda^{\gamma+2}(\bR^d)} \leq N\bM_t\left( \|f(\star,\cdot)\|_{\Lambda^{\gamma}(\bR^d)}1_{(0,T)}(\star) \right)(t) \leq N \|f\|_{\mathbf{\Lambda}_{\infty}^{\gamma}(T)},
\end{align*}
which easily yields $u\in \bH_{\infty,0}^{\alpha,\gamma+2}(T)$. We notice that the measurability follows from the case $p\in(1,\infty)$.
The lemma is proved.
\end{proof}

\begin{proof}[Proof of Theorem \ref{thm_main}]
Note that the estimate \eqref{eq_main_est} follows from Lemma \ref{lem_apriori}. Thus, we only need to use the a priori estimate together with Lemma \ref{lem8281644}.
The theorem is proved.
\end{proof}

\vspace{3em}
\textbf{Declaration of interest}
The authors have no competing interests to declare that are relevant to the content of this article.

\textbf{Data availability}
Data Availability No datasets were generated or analysed during the current study.


\end{document}